\documentclass[hidelinks,onefignum,onetabnum]{siamart220329}
\newsiamthm{assump}{Assumption}
\crefname{subsection}{section}{sections}
\Crefname{subsection}{Section}{Sections}
\crefname{section}{Section}{Sections}
\Crefname{section}{Section}{Sections}
\crefrangeformat{figure}{Figures~#3#1#4 and~#5#2#6}
\Crefrangeformat{figure}{Figures~#3#1#4 through~#5#2#6}
\crefrangeformat{section}{Sections~#3#1#4 and~#5#2#6}
\Crefrangeformat{section}{Sections~#3#1#4 through~#5#2#6}
\crefrangeformat{subsection}{Sections~#3#1#4 and~#5#2#6}
\Crefrangeformat{subsection}{Sections~#3#1#4 through~#5#2#6}
\crefrangeformat{equation}{equations~(#3#1#4) through~(#5#2#6)}
\Crefrangeformat{equation}{Equations~(#3#1#4) through~(#5#2#6)}

\usepackage{lipsum}
\usepackage{amsfonts}
\usepackage{graphicx}
\usepackage{epstopdf}
\usepackage{algorithmic}
\usepackage{upgreek}
\usepackage{dsfont}
\ifpdf
  \DeclareGraphicsExtensions{.eps,.pdf,.png,.jpg}
\else
  \DeclareGraphicsExtensions{.eps}
\fi

\newsiamremark{remark}{Remark}
\newsiamremark{hypothesis}{Hypothesis}
\crefname{hypothesis}{Hypothesis}{Hypotheses}
\newsiamthm{claim}{Claim}

\headers{Global regularization of 3D volume integral operators}{T. G. Anderson, M. Bonnet, L. M. Faria, and C. P\'erez-Arancibia}

\title{A general-purpose global regularization method for 3D volume integral operators\thanks{Submitted to the editors \today.
}}

\author{Thomas G. Anderson\thanks{Department of Computational Applied Mathematics \& Operations Research, Rice University, Houston, TX USA
  (\email{thomas.anderson@rice.edu}).}
\and Marc Bonnet\thanks{POEMS (CNRS, INRIA, ENSTA), ENSTA Paris, 91120 Palaiseau, France
  (\email{marc.bonnet@ensta-paris.fr}, \email{luiz.faria@ensta-paris.fr}).}
\and Luiz M. Faria\footnotemark[3]
\and Carlos~P\'erez-Arancibia\thanks{Department of Applied Mathematics, University of Twente, Enschede, The Netherlands (\email{c.a.perezarancibia@utwente.nl}).}
}

\usepackage{amsopn}

\ifpdf
\hypersetup{
  pdftitle={Global regularization of general 3D volume integral operators},
  pdfauthor={T. G. Anderson, M. Bonnet, L. M. Faria, and C. P\'erez-Arancibia}
}
\fi

\newcommand{\Newcommand}[2]{\providecommand{#1}{}\renewcommand{#1}{#2}}

\makeatletter
\newcommand{\proofstep}[1]{%
  \par
  \addvspace{\smallskipamount}
  \textit{#1\@addpunct{.}}\enspace\ignorespaces
}
\makeatother

\newlength{\kaka}
\newlength{\LSpace}

\providecommand{\ahref}[2]{}

\newcommand{\shtimes}{\hspace*{-0.1em}\times\hspace*{-0.1em}}

\newcommand{\shm}{\hspace*{-0.1em}-\hspace*{-0.1em}}
\newcommand{\shp}{\hspace*{-0.1em}+\hspace*{-0.1em}}
\newcommand{\sheq}{\hspace*{-0.1em}=\hspace*{-0.1em}}

\newcommand{\shg}{\hspace*{-0.1em}>\hspace*{-0.1em}}

\newcommand{\shin}{\hspace*{-0.1em}\in\hspace*{-0.1em}}

\newcommand{\shsetm}{\hspace*{-0.1em}\setminus\hspace*{-0.1em}}
\newcommand{\shsubs}{\hspace*{-0.1em}\subset\hspace*{-0.1em}}

\newcommand{\del}[1][]{\partial_{#1}}

\newcommand{\vv}[1]{\stackrel{\scriptscriptstyle\vee}{#1}\!\!\rule{0em}{1ex}}

\newcommand{\inv}[1]{\dfrac{1}{#1}}

\newcommand{\jump}[1]{[\![ {#1} ]\!]}

\newcommand{\lsqb}{\big[\hspace*{0.1em}}
\newcommand{\rsqb}{\hspace*{0.1em}\big]}
\newcommand{\lcb}{\big\{\hspace*{0.1em}}
\newcommand{\rcb}{\hspace*{0.1em}\big\}}
\newcommand{\Lcb}{\Big\{\hspace*{0.1em}}
\newcommand{\Rcb}{\hspace*{0.1em}\Big\}}
\newcommand{\Lsqb}{\Big[\hspace*{0.1em}}
\newcommand{\Rsqb}{\hspace*{0.1em}\Big]}
\newcommand{\lpar}{\big(\hspace*{0.1em}}
\newcommand{\rpar}{\hspace*{0.1em}\big)}

\newcommand{\labs}{\big|\hspace*{0.1em}}
\newcommand{\rabs}{\hspace*{0.1em}\big|}

\newcommand{\lnrm}{\big\|\hspace*{0.1em}}
\newcommand{\rnrm}{\hspace*{0.1em}\big\|}

\newcommand{\Lpar}{\Big(\,}
\newcommand{\Rpar}{\,\Big)}

\newcounter{refcounter}
\usepackage{ifthen}

\newcommand{\de}{\delta \eta}

\newcommand{\OO}{\Omega}

\newcommand{\G}{\Gamma}

\Newcommand{\Sp}{S_{p}}

\newcommand{\ds}{\,\text{d}s}

\newcommand{\dv}{\,\text{d}v}

\newcommand{\dx}{\,\text{d}x}
\newcommand{\dy}{\,\text{d}y}

\newcommand{\iO}{\int_{\OO}}

\newcommand{\iG} {\int_{\G}}

 \newlength{\meno}
 \newlength{\integrale}
 \newlength{\trattino}
 \newlength{\integralegrande}
 \newlength{\uguale}
 \newlength{\ugualegrande}

\newcommand{\Bcal}{\mathcal{B}}
\newcommand{\Ccal}{\mathcal{C}}
\newcommand{\Dcal}{\mathcal{D}}
\newcommand{\Ecal}{\mathcal{E}}

\newcommand{\Ical}{\mathcal{I}}

\newcommand{\Lcal}{\mathcal{L}}

\newcommand{\Ncal}{\mathcal{N}}
\newcommand{\Ocal}{\mathcal{O}}
\newcommand{\Pcal}{\mathcal{P}}
\newcommand{\Qcal}{\mathcal{Q}}
\newcommand{\Rcal}{\mathcal{R}}
\newcommand{\Scal}{\mathcal{S}}
\newcommand{\Tcal}{\mathcal{T}}

\newcommand{\Vcal}{\mathcal{V}}
\newcommand{\Wcal}{\mathcal{W}}
\newcommand{\Xcal}{\mathcal{X}}

\newcommand{\Zcal}{\mathcal{Z}}

\Newcommand{\PS}{\mbox{\boldmath $\mathcal{P}$}}

\Newcommand{\NC} {{\rm NC}}

\Newcommand{\NF}{\rmN\Fsub}

\Newcommand{\NN}{\rmN\Nsub}

\newcommand{\dotp}{\raisebox{1pt}{\hspace*{1pt}\scalebox{0.45}{$\bullet$}}\hspace*{1pt}}

\newcommand{\eps}{\varepsilon}

\newcommand{\oo}{\omega}

\newcommand{\Hsup}{^{\text{\tiny H}}}

\newcommand{\Tsup}{^{\text{\tiny T}}}

\newcommand{\Fsub}{_{\text{\tiny F}}}

\newcommand{\Nsub}{_{\text{\tiny N}}}

\newcommand{\inc}{_{\text{inc}}}

\Newcommand{\Ref}{^{\text{ref}}}

\Newcommand{\skew}{_{\text{skew}}}

\Newcommand{\Tr}{\text{Tr}}

\renewcommand{\Re}{\text{Re}}

\newcommand{\rmi} {\mathrm{i}}

\newcommand{\rmN} {\mathrm{N}}

\newcommand{\rmQ} {\mathrm{Q}}

\newcommand{\sfA} {\mathsf{A}}

\newcommand{\sfC} {\mathsf{C}}

\newcommand{\sfK} {\mathsf{K}}

\newcommand{\sfv} {\mathsf{v}}

\newcommand{\Khat}{\hat{K}}

\newcommand{\yHat}{\widehat{y}}

\newcommand{\KHat}{\widehat{K}}

\newcommand{\Lbar}{\Lbar{}{}}

\newcommand{\Nbar}{\Nbar{}{}}
\newcommand{\Obar}{\Obar{}{}}

\newcommand{\Rbar}{\Rbar{}{}}

\newcommand{\Tbar}{\Tbar{}{}}

\Newcommand{\hbar}{\bar{h}{}}

\Newcommand{\Bbb} {\mathbb{B}}
\newcommand{\Cbb} {\mathbb{C}}

\newcommand{\Nbb} {\mathbb{N}}

\newcommand{\Rbb} {\mathbb{R}}

\newcommand{\OmegaB}{\overline{\Omega}}

\Newcommand{\bfeta}{\boldsymbol{\eta}}

\Newcommand{\de}{\,\mathrm{d}}

\Newcommand{\inc}{\mathrm{inc}}

\newcommand{\p}{\partial}

\newcommand{\R}{\mathbb{R}}
\newcommand{\C}{\mathbb{C}}
\newcommand{\N}{\mathbb{N}}

\Newcommand{\dv}{\operatorname{div}}
\newcommand{\GG}{\mathsf{G}}
\newcommand{\Id}{\operatorname{I}}

\renewcommand{\vv}[1]{\boldsymbol{#1}}
\usepackage{amsmath}

\newcommand{\subT}{_{\mathscr{T}}}
\newcommand{\subL}{_{\mathscr{L}}}
\newcommand{\BcalT}{\widetilde{\Bcal}}

\newcommand{\sfKBr}{\widebreve{\sfK}}
\newcommand{\ZcalB}{\widebreve{\Zcal}{}}

\usepackage{paralist}
\usepackage{multirow}
\usepackage{amsmath}
\usepackage{amsfonts}
\usepackage{amsmath}
\usepackage{amssymb}
\usepackage{graphicx}
\usepackage{mathrsfs}
\usepackage{upgreek}
\usepackage{booktabs}
\usepackage{cite}
\usepackage{hyperref}
\usepackage{mathtools}
\usepackage{enumitem}
\usepackage[inkscapearea=page]{svg}
\usepackage{adjustbox}
\usepackage{nccmath}

\makeatletter
\def\widebreve{\mathpalette\wide@breve}
\def\wide@breve#1#2{\sbox\z@{$#1#2$}%
     \mathop{\vbox{\m@th\ialign{##\crcr
\kern0.08em\brevefill#1{0.8\wd\z@}\crcr\noalign{\nointerlineskip}%
                    $\hss#1#2\hss$\crcr}}}\limits}
\def\brevefill#1#2{$\m@th\sbox\tw@{$#1($}%
  \hss\resizebox{#2}{\wd\tw@}{\rotatebox[origin=c]{90}{\upshape(}}\hss$}
\makeatletter

\makeatletter
\let\save@mathaccent\mathaccent
\newcommand*\if@single[3]{%
  \setbox0\hbox{${\mathaccent"0362{#1}}^H$}%
  \setbox2\hbox{${\mathaccent"0362{\kern0pt#1}}^H$}%
  \ifdim\ht0=\ht2 #3\else #2\fi
  }
\newcommand*\rel@kern[1]{\kern#1\dimexpr\macc@kerna}
\newcommand*\widebar[1]{\@ifnextchar^{{\wide@bar{#1}{0}}}{\wide@bar{#1}{1}}}
\newcommand*\wide@bar[2]{\if@single{#1}{\wide@bar@{#1}{#2}{1}}{\wide@bar@{#1}{#2}{2}}}
\newcommand*\wide@bar@[3]{%
  \begingroup
  \def\mathaccent##1##2{%
    \let\mathaccent\save@mathaccent
    \if#32 \let\macc@nucleus\first@char \fi
    \setbox\z@\hbox{$\macc@style{\macc@nucleus}_{}$}%
    \setbox\tw@\hbox{$\macc@style{\macc@nucleus}{}_{}$}%
    \dimen@\wd\tw@
    \advance\dimen@-\wd\z@
    \divide\dimen@ 3
    \@tempdima\wd\tw@
    \advance\@tempdima-\scriptspace
    \divide\@tempdima 10
    \advance\dimen@-\@tempdima
    \ifdim\dimen@>\z@ \dimen@0pt\fi
    \rel@kern{0.6}\kern-\dimen@
    \if#31
      \overline{\rel@kern{-0.6}\kern\dimen@\macc@nucleus\rel@kern{0.4}\kern\dimen@}%
      \advance\dimen@0.4\dimexpr\macc@kerna
      \let\final@kern#2%
      \ifdim\dimen@<\z@ \let\final@kern1\fi
      \if\final@kern1 \kern-\dimen@\fi
    \else
      \overline{\rel@kern{-0.6}\kern\dimen@#1}%
    \fi
  }%
  \macc@depth\@ne
  \let\math@bgroup\@empty \let\math@egroup\macc@set@skewchar
  \mathsurround\z@ \frozen@everymath{\mathgroup\macc@group\relax}%
  \macc@set@skewchar\relax
  \let\mathaccentV\macc@nested@a
  \if#31
    \macc@nested@a\relax111{#1}%
  \else
    \def\gobble@till@marker##1\endmarker{}%
    \futurelet\first@char\gobble@till@marker#1\endmarker
    \ifcat\noexpand\first@char A\else
      \def\first@char{}%
    \fi
    \macc@nested@a\relax111{\first@char}%
  \fi
  \endgroup
}
\makeatother

\usepackage[normalem]{ulem}

\definecolor{burntumber}{rgb}{0.54, 0.2, 0.14}

\usepackage[format=hang,font=small,labelfont={bf,sl},textfont=sl,width=0.96\textwidth]{caption}
\usepackage{subcaption}
\graphicspath{{./Figures/}{./}}

\begin{document}
\maketitle

\begin{abstract}
  Singular volume integral operators associated with constant-coefficient
  partial differential operators extend the applicability of potential theory
  to inhomogeneous problems, for example arising from nonlinearities or
  variable coefficients. Typically the PDE kernels in these operators give rise to singularities at all $\Ocal(1/h^3)$ volume
  discretization/evaluation points in a mesh of characteristic size $h$, while the slowly-decaying nature of such kernels give rise to long-range interactions that require
  coupling to fast summation algorithms. The presented method
  uses Green's identities to regularize a wide variety of both scalar-valued and vector-valued volume integral operators by use of a certain regularizing volume density interpolant. The analysis shows how the
  regularizing effect of the interpolant is \emph{global} in the sense that the interpolation
  quality increases in an exactly compensatory fashion as the distance to the
  Green's function singularity decreases. High-order convergence estimates with
  tabulated simplex quadratures are established, including with exact
  representation of curved domains.
\end{abstract}

\begin{keywords}
  volume potential, volume integral operator, integral equations, high-order quadrature, curved
  meshing, fast algorithm
\end{keywords}

\begin{MSCcodes}
  65R20, 65D32
\end{MSCcodes}

\maketitle 

\section{Introduction}
This paper presents a regularization scheme for singular volume integral operators (VIOs) of the form
\begin{equation}\label{VIO:Z}
\Zcal [f](x) \coloneqq \int_{\Omega} \sfK(x,y) f(y)\de y,\quad x\in\R^3,
\end{equation}
where the bounded domain $\Omega \subset \R^3$ has a piecewise-smooth boundary $\G$, the (scalar-, vector- or tensor-valued) kernel $\sfK$ is defined in terms of a free-space Green's function $\GG$ associated with a constant-coefficient partial differential operator (PDO) $\Lcal$ and (possibly) its derivatives, and $f \in C^s(\overline\Omega,\C^\sigma)$ ($s\geq 0$ to be specified later) is a given field (with scalar, vector or tensor values depending on the VIO considered and the underlying PDE) that is termed the source density. The archetypal VIO is the Newton potential operator, for which $\sfK=\GG$, but the proposed methodology generalizes significantly to kernels for which the second Green's identity can be used on resulting potentials. Concrete PDE models that fit the framework of this paper include for example, but are not limited to, operators associated with Laplace, Helmholtz (including with complex parameter), isotropic as well as anisotropic elasticity (and their time-harmonic counterparts), and Stokes flows.

For an overview of numerical methods for evaluating VIOs in 2D we refer the reader to~\cite{Anderson:22a,anderson2024fast}; in what follows we describe only what is relevant to the 3D problem.
When the domain is a finite union of cubes the Parallel Volume FMM (PVFMM) software~\cite{malhotra2014volume,malhotra2015pvfmm} accurately and efficiently computes
volume potentials; in the context of general geometry
problems it has been used in conjunction with penalty
methods~\cite{malhotra2014volume} which generally leads to first-order accurate
solutions in the $L^2(\Omega)$ norm.  Much less has been done for the 3D
problem in general complex geometries, and available works~\cite{Steinbach:10,mohyaddin2023fast} have not
successfully demonstrated high-order
accuracy:  reference~\cite{Steinbach:10} does establish second-order convergence in energy or $L^2$ norms on the boundary $\G$, whereas while~\cite{mohyaddin2023fast} claims high-order convergence, in the norm of $L^2(\Omega)$ but not in any stronger norm, only first-order convergence is demonstrated.

The present work is closely linked to the authors' prior work~\cite{anderson2024fast} which, like this paper, proposed volume density interpolation methods to regularize volume integrals, but which, unlike this work, was limited to the Laplace and Helmholtz operators in 2D. This work extends these methods in a fashion generic to all strongly elliptic operators, establishes their convergence in three dimensions, treats additional integral operators involving more-singular kernels, as well as  additionally develops improved regularization schemes even for those operators treated prior. Finally, while the convergence analysis of the prior work was limited to affine elements, the present work treats \emph{isogeometric} (also referred to as \emph{exact}) elements which reproduce without geometric error the precise domain under consideration, as we describe next.

The domain $\Omega$, with curved boundary assumed to be piecewise-$C^\theta$, can be discretized into curved domain elements which coincide with or approximate this curved boundary. An exact curved domain element is one such that the boundary of the element coincides with the domain boundary; in practice, as finite element methods involve polynomial test spaces, the final methods use polynomial approximation of the functions defining the domain elements leading to for example \emph{isoparametric} finite element methods, where the same space of polynomials is used for approximating each of the solution and geometry. Constructing curved 3D elements, either approximate or exact, is highly nontrivial~\cite{lenoir1986optimal,bernardi1989optimal,dubois1990discrete}, whereas the analysis of resulting finite element methods involves not only that of function discretization error but also of error emanating from domain approximation, see e.g.~\cite{zlamal1973curved,zlamal1974curved}. Our interest here, however, lays in Nystr\"om integral equation methods which do not involve polynomial test spaces; consequently we find it natural to retain geometrically exact element parametrizations that thus obviate the need for analysis of the latter error.  The present paper uses \emph{exact} curved elements which are a tunably-smooth perturbation of a straight domain element; the main technical property of the curved mappings generating such elements that our analysis relies upon is that the mappings are, in the language of~\cite{bernardi1989optimal}, `regular of order $\theta$'---for an element of characteristic size $h$, derivatives of order $m \le \theta+1$ of the map are $\mathcal{O}(h^m)$.

The paper is organized as follows. The preliminary \Cref{sec:prelim} introduces a class of PDOs treated by the methodology and several types of VIOs of relevance. \Cref{sec:vdim} presents the integral regularization strategy, describes the curved element mappings to be utilized and summarizes necessary (known) theoretical results concerning these mappings. \Cref{sec:analysis} presents the error analysis of resulting regularized volume integrals. Finally, numerical results are reported in \Cref{sec:numer} and some summary remarks conclude the paper in \Cref{sec:conclusion}.\enlargethispage*{1ex}

\section{Preliminaries}
\label{sec:prelim}

\subsection*{Strongly-elliptic PDOs}

To allow sufficient generality while retaining definiteness, our main focus is on VIOs associated with strongly elliptic (second-order, constant-coefficient) PDOs $\Lcal$, which (using definitions and notation style of~\cite[Chaps.~4, 6]{mclean}) are of the generic form
\begin{equation}
  \Lcal u = \medop\sum_{j=1}^d \lsqb -\del[j]\Bcal_j u + A_j\del[j]u \rsqb + Au, \qquad
  \Bcal_j u := \medop\sum_{k=1}^d A_{jk}\del[k]u. \label{PDO}
\end{equation}
where $d$ is the physical space dimension (the primary focus of this work being $d=3$).

They act on a vector-valued function $u:\OO\to\Cbb^p$ to give a vector-valued function $\Lcal u:\OO\to\Cbb^p$, and their coefficients $A_{jk},A_j,A$ are, for each value of coordinate indices $j,k$, complex $p\shtimes p$ matrices. The strong ellipticity assumption on $\Lcal$ consists in the principal (i.e. leading-order) part $\Lcal_0$ of $\Lcal$, defined as $\Lcal_0 u := -\sum_{j=1}^d \del[j]\Bcal_j u$, verifying
\begin{equation}
  \Re\Lcb \medop\sum_{j=1}^d\medop\sum_{k=1}^d \lsqb A_{jk}\xi_k\eta \rsqb\Hsup \xi_j\eta \Rcb \geq C|\xi|^2|\eta|^2 \quad\text{for all }\xi\shin\Rbb^d,\,\eta\shin\Cbb^p,
\end{equation}
where $()\Hsup$ denotes the Hermitian transpose of a vector or matrix. For scalar problems ($p=1$), $A_{jk},A_j,A$ are scalars and the strong ellipticity assumption reduces to
\begin{equation}
  \Re\Lcb \medop\sum_{j=1}^d\medop\sum_{k=1}^d \overline{A_{jk}\xi_k}\, \xi_j \Rcb \geq C|\xi|^2 \quad\text{for all }\xi\shin\Rbb^d.
\end{equation}
However, our proposed numerical treatment of VIOs additionally applies in various cases not covered by the strong ellipticity assumption, which notably comprise the Stokes and Maxwell systems, see the end of the present Section. (The specific property that a PDO $\Lcal$ must possess is that the second Green's identity~\cref{green3}, discussed shortly, holds, as it does for strongly elliptic operators.)\enlargethispage*{12ex}

The conormal derivative $\Bcal_{\nu}u$ of $u$ is defined by
\begin{equation}
  \Bcal_{\nu}u := \medop\sum_{j=1}^d \nu_j\gamma\lpar \Bcal_j u \rpar,
\end{equation}
where $\nu$ is the outward unit normal to $\G$, $\gamma$ is the trace operator for $\OO$ and $\G=\del\OO$ is the boundary of $\OO$. For example, $\Bcal_{\nu}u=\del[\nu]u$ is the normal derivative of $u$ for the Laplace or Helmholtz equations, and produces the traction vector in linear elasticity.

The second Green identity associated with the PDO $\Lcal$ then reads
\begin{equation}
  \iO \lsqb v\Hsup(\Lcal u) - (\Lcal^{\star}v)\Hsup u \rsqb \dx
 = \iG \lsqb (\BcalT_{\nu}v)\Hsup\gamma u - (\gamma v)\Hsup\Bcal_{\nu}u \rsqb \ds,
\label{green3}
\end{equation}
with the formal adjoint PDO $\Lcal^{\star}$ and the adjoint conormal derivative $\BcalT_{\nu}u$ defined by
\begin{equation}
  \Lcal^{\star} u = -\medop\sum_{j=1}^d \del[j]\BcalT_j A\Hsup u, \quad
  \BcalT_j u := \medop\sum_{k=1}^d A\Hsup_{kj}\del[k]u + A\Hsup_j u, \qquad
  \BcalT_{\nu}u := \medop\sum_{j=1}^d \nu_j\gamma\lpar \BcalT_j u \rpar.
\end{equation}

The PDO $\Lcal$ has a full-space Green's (scalar- or tensor-valued) function $\GG$ verifying
\begin{equation}
  \Lcal_y \GG(x,y) = \delta(y-x)\Id_p \label{Green}
\end{equation}
together with decay or radiation conditions (depending on the PDO considered) at infinity; moreover, $\GG$ is translation-invariant, i.e. has the form
\begin{equation}
  \GG(x,y) = \Upphi(x-y), \qquad x,y\in\R^3 \label{nabla:G}
\end{equation}
for some function $\Upphi$. A Green's function $\GG^{\star}(x,y)$ for the formal adjoint PDO $\Lcal^{\star}$ may be defined in the same way, and identity~\eqref{green3} for $\OO$ containing an arbitrarily large ball then allows to show that
\begin{equation}
  \GG^{\star}(x,y) = \GG\Hsup(y,x), \qquad x,y\in\R^3. \label{G:Gstar}
\end{equation}
For any strongly-elliptic PDO $\Lcal$ ($d = 3$), $\GG(x,y)$ has a $\Ocal(|y-x|^{-1})$ singularity at the source point $x$; in fact, the Green's function $\GG_0$ associated with the principal part $\Lcal_0$ is positively homogeneous with degree $-1$ in $y-x$, see e.g.~\cite[Thm.~6.8]{mclean}, while the complementary part $\GG-\GG_0$ remains bounded at $y=x$.\enlargethispage*{10ex}

\subsection*{Examples of PDOs and their Green's functions} The foregoing assumptions on $\Lcal$ cover many classical PDOs of mathematical physics. They comprise the scalar ($p=1$) Laplace, Helmholtz and anisotropic Laplace PDOs
\begin{equation}
  \text{(a) \ }\Lcal u = -\Delta u, \qquad \text{(b) \ }\Lcal u = -\Delta u - (\oo^2/c^2)u, \qquad
  \text{(c) \ }\Lcal u = -\dv(\sfA\nabla u) \label{laplace}
\end{equation}
(with $A_{ik}=\delta_{ik}$ for (a,b), $A_{ik}$ the entries of a symmetric positive definite matrix $\sfA$ for (c), and $A=-\oo^2/c^2$ for (b)) as well as their vector versions. They also include the steady-state and time-harmonic linear isotropic elasticity PDOs ($p=d$)
\begin{equation}
  \Lcal u = -\mu \Delta u-(\mu+\lambda) \nabla(\dv u), \quad
  \Lcal u = -\mu \Delta u-(\mu+\lambda) \nabla(\dv u) -\omega^2 \rho u \label{elast}
\end{equation}
governing a displacement field $u$ ($\rho$ denoting the mass density, and $\lambda,\mu$ the Lam\'e parameters, of the medium), as well as their extensions to anisotropic elasticity (where $[A_{ik}]_{ab}=\sfC_{aibk}$ for a real-valued fourth-order elasticity tensor $\sfC$ having the usual ellipticity, minor symmetry and major symmetry properties). For the time-harmonic equations, $\omega>0$ denotes the angular frequency. Another example, for which (unlike in the aforementioned cases) $\Lcal^{\star}\not=\Lcal$, is the scalar diffusion-advection equation
\begin{equation}
  \Lcal u = -\dv(\sfA\nabla u) + \sfv\cdot\nabla u \label{adv:dif}
\end{equation}
(with $\sfA$ as in~(\ref{laplace}c) and $A_i=\sfv_i$) where $\sfv\in\R^3$ is a known spatially-uniform velocity.

In general, the Green's function verifying~\eqref{Green} (together with the suitable decay or radiation condition at infinity) is known as an inverse spatial Fourier transform, as exemplified by general anisotropic elasticity. Well-known closed-form expressions of $\GG(x,y)=\Upphi(x-y)$ are however available in a number of cases. In particular, focusing on the three-dimensional case ($d=3$), we have
\begin{equation}
  \text{(a) \ }\Upphi(z) = \inv{4\pi|z|}, \qquad
  \text{(b) \ }\Upphi(z) = \frac{e^{\rmi\oo|z|/c}}{4\pi|z|}, \qquad
  \text{(c) \ }\Upphi(z) = \inv{4\pi\sqrt{\text{det}\sfA}|z|_{\sfA}} \label{laplace:G}
\end{equation}
for~\eqref{laplace} (with $|z|_{\sfA}:=\sqrt{z\Tsup\sfA^{-1}z}$ for (c)). In the linear isotropic elasticity case~\eqref{elast}, we have
\begin{equation}
  \Upphi(z):= \begin{cases}
\displaystyle \inv{8\pi\mu|z|} \Lcb \frac{\lambda+3 \mu}{\lambda+2 \mu}\Id_p+\frac{\lambda+\mu}{\lambda+2 \mu} \frac{zz\Tsup}{|z|^2} \Rcb & (\text{elastostatics}) \\[0.3cm]
\displaystyle\inv{4\pi\mu|z|}\Lcb A(|z|) \Id_p+B(|z|) zz\Tsup \Rcb & (\text{elastodynamics})\end{cases}
\end{equation}
with, for the isotropic elastodynamic case, the auxiliary functions $A$ and $B$ given by
\begin{equation}
\begin{aligned}
  A(s) &:= \Lpar 1+\frac{i}{k_T s}-\frac{1}{k_T^2 s^2} \Rpar \mathrm{e}^{\rmi k_T s}
  - \frac{k_L^2}{k_T^2} \Lpar \frac{\rmi}{k_L s}-\frac{1}{k_L^2 s^2} \Rpar \mathrm{e}^{\rmi k_L s} ,\\
  B(s) &:= \Lpar \frac{3}{k_T^2 s^2}-\frac{3\rmi}{k_T s}-1 \Rpar \mathrm{e}^{\rmi k_T s}
  - \frac{k_L^2}{k_T^2} \Lpar \frac{3}{k_L^2 s^2}-\frac{3\rmi}{k_L s}-1 \Rpar \mathrm{e}^{\rmi k_L s}.
\end{aligned}
\end{equation}
where $k_L^2:=\rho\omega^2/(\lambda+2 \mu)$ and $k^2_T:=\rho \omega^2/\mu$ are the wavenumbers of compressive (a.k.a. longitudinal) and shear (a.k.a. transversal) elastic waves, respectively. As for the Green's function for the diffusion-advection PDO~\eqref{adv:dif}, it is given by
\begin{equation}
  \Upphi(z) = \frac{\exp\lpar \sfv\Tsup\sfA^{-1}z-|\sfv|_{\sfA}\,|z|_{\sfA}\rpar}{4\pi\sqrt{\text{det}\sfA}|z|_{\sfA}}.
   \label{adv:dif:G}
\end{equation}

\subsection*{Volume potentials and their uses} We focus in this work on three classical types of VIOs, presented along increasing orders of kernel singularity. The first one is the Newton potential operator
\begin{equation}
  \Vcal[f](x) := \iO \GG(x,y) f(y) \dy,\qquad x \in \R^d, \label{V:def}
\end{equation}
for a source density $f:\OO\to\Cbb^p$ (implicitly extended by zero to $\R^d \setminus \overline{\Omega}$)), which generates particular solutions to inhomogeneous PDEs as it verifies~\cite[Ch.\ 6]{mclean} (see for more ~\cite[Ch.\ 6, Thm.\ III]{kel})
\begin{equation}
  \Lcal\Vcal[f] = f \qquad\text{in }\R^d \setminus \Gamma, \qquad \jump{\Vcal[f]} = \jump{\Bcal_\nu \Vcal[f]} = 0 \quad \text{on } \Gamma, \label{V:PDE}
\end{equation}
where $\jump{\cdot}$ is the jump over $\Gamma$. The gradient $\nabla \Vcal$ is also frequently of interest.

Next, for densities $g:\OO\to\Cbb^{p\times d}$, we define the VIO
\begin{equation}
  \Wcal[g](x) := \dv\Lpar \iO \GG(x,y) g(y) \dy \Rpar = -\iO \nabla_y \GG(x,y) \cdot g(y) \dy,\qquad x \in \R^d, \label{W:def}
\end{equation}
(with the second equality resulting from~\eqref{nabla:G} and the "$\cdot$" symbol indicating a tensor contraction over all indices of the lowest-order tensor, here $g$), which verifies 
\begin{equation}
  \Lcal\Wcal[g] = \dv g \quad\text{in }\OO\cup(\R^d\shsetm\OmegaB), \qquad
  \jump{\Bcal_{\nu}\Wcal[g]}=g\nu \quad \text{on }\G, \label{W:PDE}
\end{equation}
i.e. it generates particular solutions to inhomogeneous PDEs with divergence-form right-hand sides; such solutions for example occur when studying the response of elastic bodies subjected to thermal strains or residual stresses. The first relation in~\cref{W:PDE} can be established e.g. using $\Lcal \Wcal = \dv \Lcal \Vcal$ and~\cref{V:PDE} (or using the divergence theorem, see~\cref{W:IPP}), while the jump relation follows from the divergence theorem applied to~\cref{W:def}, see~\cref{W:IPP}, followed by use of the jump relation~\cite[Thm.\ 6.11]{mclean} for $\Bcal_{\nu} \Scal$ and the fact that $\jump{\Bcal_\nu \Vcal} = 0$.

In addition to providing particular PDE solutions, the VIOs $\Vcal$ and $\Wcal$ are involved in volume integro-differential equations (VIEs) governing fields in media whose physical parameters deviate within a bounded region $\OO$ from reference constant values entering the PDO $\Lcal$. Let the perturbed medium be described by the PDO $\Lcal'$, of the form~\eqref{PDO} but with possibly space-dependent coefficients $A'_{jk},A'_{j},A$ whose support is contained in $\overline{\OO}$ so that $\delta\Lcal:=\Lcal'-\Lcal=0$ outside of $\overline{\OO}$. Letting $u$ be a given background field satisfying $\Lcal u=0$ in $\R^d$, one seeks $u'$ solving $\Lcal' u'$ in $\R^d$ and such that $u'-u$ is decaying or radiating (depending on the physics) at infinity. Postulating the ansatz $u'-u=\Vcal[f]+\Wcal[g]$ and using~\eqref{V:PDE}, \eqref{W:PDE} together with the requirements $\Lcal(u'-u)=-\delta\Lcal u'$ and $\jump{\Bcal_{\nu}(u'\shm u)}=\delta\Bcal_{\nu}u'\mid_{-}$ (implied by $\Lcal u=0$ and $\Lcal'u'=0$) then yields $f,g$ in terms of $u'$ and $\delta A_{jk},\,\delta A_{j},\,\delta A$, so that $u'$ is finally found after some algebra to satisfy the VIE\enlargethispage*{7ex}
\begin{equation}
  u'(x)
  - \Wcal\Lsqb \medop\sum_{j=1}^d \lpar \delta\Bcal_i u' \rpar e_j\Tsup \Rsqb(x)
  + \Vcal\Lsqb \medop\sum_{j=1}^d (\delta A_j)\del[j]u' + (\delta A)u' \Rsqb(x) = u(x), \; x\in\R^d. \label{VIE}
\end{equation}

If $\delta A_{jk}=0$ and $\delta A_{j}=0$ (i.e. if medium perturbations modify only the lowest-order part of $\Lcal$), \eqref{VIE} reduces to the classical Lippmann-Schwinger VIE
\begin{equation}
  u'(x) + \Vcal\lsqb (\delta A)u' \rsqb(x) = u(x), \qquad x\in\R^d,
\end{equation}
associated with e.g. wave scattering by perturbations of refraction index or (in elasticity) mass density. Alternatively, if $\delta A_j=0$ and $\delta A=0$ (i.e. if medium perturbations modify only the principal part $\Lcal_0$ of $\Lcal$), applying the operator $\sum_{i=1}^d\delta\Bcal_i e_i\Tsup$ to~\eqref{VIE} produces the VIE
\begin{equation}
  g(x) - \medop\sum_{i=1}^d \lpar \delta\Bcal_i \Wcal[g](x) \rpar e_i\Tsup
 = \medop\sum_{i=1}^d (\delta\Bcal_i u) e_i\Tsup, \qquad x\in\R^d,
\end{equation}
whose unknown is $g:= \sum_{j=1}^d (\delta\Bcal_j u')e_j\Tsup$. This type of singular VIE appears in theoretical and computational studies of heterogeneous conducting (or elastic) media, as it directly relates the flux (or stress) perturbation induced by a medium perturbation to a given background potential gradient (or strain), the singular VIO $\sum_{j=1}^d \lpar \delta\Bcal_i \Wcal[g](x) \rpar e_i\Tsup = \sum_{j=1}^d \lpar \sum_{k=1}^d \delta A_{ik}\del[k]\Wcal[g](x) \rpar e_i\Tsup$ being sometimes referred to as the Green (or $\Gamma$) operator in this context~\cite{willis:77}. This motivates us to also make our treatment applicable to the singular VIO
\begin{equation}
  \Xcal[g](x) := \nabla\Wcal[g](x),\qquad x \in \R^d, \label{X:def}
\end{equation}
whose explicit expression
\begin{equation}
  \Xcal[g](x)
 = S\cdot g(x) - \text{PV}\iO \nabla_x\lpar \nabla_y\GG(x,y)\cdot g(y) \rpar \dy \label{X:expr}
\end{equation}
(with the tensor contraction symbol "$\cdot$" as in~\eqref{W:def}) involves a Cauchy principal value (PV) volume integral (i.e the limit as $\eps\to0$ of the corresponding integral over $\OO\setminus B_{\eps}(x)$) and the tensor $S$ defined, for any (vector or tensor depending on the underlying PDE) constant $b$, by
\begin{equation}
  S\cdot b := -\lim_{\eps\to0} \int_{\partial B_{\eps}(x)} \nu(y) \lpar \nabla_y\GG(x,y)\cdot b \rpar\Tsup \ds(y) \label{S:def}
\end{equation}
(the limit being non-zero due to the $|y-x|^{-2}$ singularity of $\nabla_y\GG(x,y)$). For example, $S$ is given by $S=-\tfrac{1}{3}I_3$ in the case of Laplace, and is known in terms of the (4th-order) Eshelby tensor for the unit ball~\cite{mura} in the case of linear elasticity. As the discussion leading to~\eqref{aux01} indicates, $S$ will in practice not be involved in the proposed numerical evaluation method applied to the VIO $\Xcal$. We also note that the above PV integral exists whenever $G$ is a Green function.

\subsection*{Additional application cases}
In some cases, the governing PDO for the relevant physical model is not strongly elliptic, yet associated volume potentials are still defined in terms of usual (e.g. Laplace or Helmholtz) Green's functions and derivatives thereof. Our methodology is then also often valid in such cases: it in particular applies to volume potentials associated with the Stokes flow and Maxwell systems, since the requisite polynomial PDE solutions are available for these cases using the construction methods of~\cite{anderson2024construction} and Green's identities for these PDEs are available~\cite[\S 2.3, \S 6.9]{hsiao:wend:21}. As another example, \cite{costabel:12} provides a governing VIE for the three-dimensional scattering of a given incident electromagnetic wave propagating in vacuum by a compactly-supported penetrable object $\OO$ whose isotropic permittivity and permeability are arbitrary constants (in particular allowed to take complex values), with the governing VIO expressed as a linear constant-coefficient combination of potentials $\Vcal,\Wcal,\Xcal$ involving the scalar Helmholtz kernel.\enlargethispage*{1ex}

\section{Volume Density Interpolation}\label{sec:vdim}

In line with our previous treatment of 2D cases~\cite{anderson2024fast}, our proposed method for evaluating a generic VIO $\Zcal$ on a given density $f$ assumes a tessellation $\Tcal_h$ of $\Omega$ and is concerned with the evaluation of $\Zcal[f](x)$ for $x$ spanning a finite mesh-dependent set $\Ecal_h$ of evaluation points in $\OO$, with the integration in~\eqref{VIO:Z} treated as a sum of integrals over the elements of $\Tcal_h$. Let $K$ denote the element that contains a given evaluation point $x$, the corresponding element integral being thus singular. Our strategy relies on additively splitting $\Zcal[f](x)$ (for $x\in K$) as
\begin{equation}
  \Zcal[f](x)
 = \Zcal\lsqb f-f_n(\cdot;K) \rsqb(x) + \Zcal\lsqb f_n(\cdot;K) \rsqb(x), \label{Z:split}
\end{equation}
$f_n(\cdot;K)$ being the polynomial of total degree $n$ (of same tensor order as $f$) that interpolates $f$ on a discrete set $\Ical_n \sheq \{ x_j: x_j\shin K\}_{j=1}^{q_n}$ of $q_n$ points within $K$, where
\begin{equation}
  q_n := \text{dim}(\Pcal_n) = (n+1)(n+2)(n+3)/6 \label{qn:expr}
\end{equation}
is the dimension of the space $\Pcal_n$ of all tri-variate polynomials of total degree at most~$n$. The purpose of a decomposition of the form~\eqref{Z:split} is to make $f-f_n(\cdot;K)$ ``small enough'' on $K$ to allow neglecting the (potentially-singular) contribution of $K$ to $\Zcal\lsqb f-f_n(\cdot;K) \rsqb(x)$ in the approximate evaluation process for $\Zcal[f](x)$. This formally amounts to replacing therein the operator $\Zcal$ and its kernel $\sfK$ by ``punctured'' versions
\begin{equation}\label{eq:punctured_green_function}
  \ZcalB[f](x) := \iO \sfKBr(x,y) f(y) \dy, \qquad
  \sfKBr(x,y) := \biggl\{\ \begin{aligned} &\sfK(x,y), &&x\in K, y\notin K, \\[-0.5ex] &0,&& x\in K,y\in K. \end{aligned}
\end{equation}
(which are by construction nonsingular) and writing
\begin{equation}
  \Zcal\lsqb f-f_n(\cdot;K) \rsqb(x) \approx \ZcalB[f](x) - \ZcalB[f_n(\cdot;K)](x). \label{Z:punct}
\end{equation}
The burden of singular integration is then transferred by~\eqref{Z:split} to the task of evaluating $\Zcal\lsqb f_n(\cdot;K) \rsqb(x)$ which, by using a suitably-chosen polynomial PDE solution associated to $f_n$, will be analytically recast as a boundary integral. The latter is nonsingular for any evaluation point interior to $\OO$, so that this part of the treatment effects an indirect regularization. We now describe in more detail the proposed evaluation method for given $K\in\Tcal_h$ and $x\in K$, where $f_n$ and related polynomials implicitly depend on $K$.\enlargethispage*{3ex}

\subsection{Evaluation of $\Zcal[f_n](x)$}\label{eval:Z:reg}

The treatment for this part depends in its details on which potential is being evaluated. Beginning with the Newtonian operator $\Zcal=\Vcal$, we let $\Phi_n=\Phi_n(\cdot;K)$ be a (possibly vector- or tensor-valued) polynomial satisfying
\begin{equation}\label{eq:reg_function}
  \Lcal\Phi_n = f_n \quad\text{in}\ \R^3.
\end{equation}
A systematic approach for the practical construction of such polynomial PDE solutions for any given $f_n$ is developed in~\cite{anderson2024construction} and yields solutions $\Phi_n$ of degree $n\shp r$, where $r$ is the derivative order of the lowest-order part of the PDO $\Lcal$ (e.g. $r\sheq 2$ for Laplace, $r\sheq 1$ for diffusion-convection, $r\sheq 0$ for Helmholtz); moreover, $\Phi_n$ is unique if $r\sheq 0$~\cite{anderson2024construction}. Writing the Green identity~\eqref{green3} for $u\sheq \Phi_n$ and $v\sheq G^{\star}(\cdot,x)\sheq \GG\Hsup(x,\cdot)$ yields
\begin{equation}
  \Vcal[f_n](x) = \mu(x)\Phi_n(x) + \Dcal[\Phi_n](x) - \Scal[\Bcal_{\nu}\Phi_n](x) \label{V:IPP}
\end{equation}
(with $\mu(x)=\Id_p$ if $x\shin\OO$, $\mu(x)=0 \Id_p$ if $x\shin\R^3\shsetm\overline{\OO}$, and $\mu(x)=\gamma(x)$ if $x\shin\G$ with the tensor $\gamma(x)$ depending only on the arrangement of the tangent planes to $\G$ at $x$), whereby $\Vcal[f_n]$ is recast in terms of boundary integrals through the single-layer and double-layer potentials~\cite{mclean}
\begin{equation}\label{eq:single_double_layers}
  \Scal[\varphi](x) := \iG \GG(x,y) \varphi(y) \ds(y), \quad
  \Dcal[\varphi](x) := \iG \lsqb \BcalT_{\nu}\GG\Hsup(x,y) \rsqb\Hsup \varphi(y) \ds(y).
\end{equation}
For the gradient of $\Vcal$ which arises naturally in Neumann as well as in nonlinear boundary value problems, one has, for $x \not\in \Gamma$,
\begin{equation}\label{Vgrad:IPP}
    \nabla \Vcal[f_n](x) = \mu(x)\nabla\Phi_n(x) + \nabla \Scal[\Phi_n](x) - \nabla \Dcal[\Bcal_{\nu}\Phi_n](x),
\end{equation}
an expression which can be used as a regularizer for the evaluation of $\nabla \Vcal$.

To reformulate $\Wcal[g_n]$ in a similar way ($g_n=g_n(\cdot; K)$ being a $p$-component-wise polynomial interpolant of $g$), we apply the divergence theorem to its defining integral~\eqref{W:def}, yielding
\begin{equation}
  \Wcal[g_n](x) = -\Scal[g_n\cdot\nu](x) + \Vcal[\dv g_n](x), \label{W:IPP}
\end{equation}
and then use~\eqref{V:IPP} with $f_n=\dv g_n$. Letting $\Psi_n=\Psi_n(\cdot,K)$ be a polynomial satisfying
\begin{equation}\label{eq:reg_function:div}
\Lcal\Psi_n = \dv g_n \quad\text{in}\ \R^3,
\end{equation}
we obtain~(cf. the more singular kernels in~\cite[eq.~(2.5)]{anderson2024fast})
\begin{equation}
  \Wcal[g_n](x)
  = \mu(x)\Psi_n(x) + \Dcal[\Psi_n](x) - \Scal\lsqb \Bcal_{\nu}\Psi_n + g_n\cdot\nu \rsqb(x),
\label{W:IPP2}
\end{equation}

Finally, to deal with the evaluation of $\Xcal[g_n](x) = \nabla_x\Wcal[g_n](x)$ on a polynomial interpolant $g_n$ of $g$, we apply $\nabla_x$ to the integrated-by-parts expression~\eqref{W:IPP} of $\Wcal[g_n]$, exploit~\eqref{nabla:G} and apply the divergence theorem to the (weakly singular) volume integral, to obtain
\begin{align}
  \Xcal[g_n](x)
 &= -\nabla_x\Scal[g_n\cdot\nu](x) - \iO \nabla_y \GG(x,y) (\dv g_n)(y) \dy \\
 &= -\nabla_x\Scal[g_n\cdot\nu](x) - \Scal\lsqb(\dv g_n)\nu\Tsup\rsqb(x) + \Vcal\lsqb\nabla\dv g_n\rsqb(x). \label{X:g_n} \notag
\end{align}
Proceeding as before, we then use~\eqref{V:IPP} with $f_n=\nabla\dv g_n$, letting $\Upsilon_n=\Upsilon_n(\cdot,K)$ be a (vector or tensor) polynomial satisfying
\begin{equation}\label{eq:reg_function:graddiv}
\Lcal\Upsilon_n = \nabla\dv g_n \quad\text{in}\ \R^3,
\end{equation}
($\Upsilon_n=\nabla\Psi_n$ being one such polynomial solution) to finally find
\begin{equation}
  \Xcal[g_n](x)
 = \mu(x)\Upsilon_n(x) - \nabla_x\Scal[g_n\cdot\nu](x) - \Scal\lsqb(\dv g_n)\nu\Tsup + \Bcal_{\nu}\Upsilon_n\rsqb(x) + \Dcal\lsqb \Upsilon_n \rsqb(x).
\label{X:IPP}
\end{equation}

In summary, the regularizing term $\Zcal[f_n](x)$ is found to take the generic form
\begin{equation}
  \Zcal[f_n](x) = \mu(x)\Xi_n(x) + \Bcal_{\Zcal}[f_n,\Xi_n](x) \label{Z:reg}
\end{equation}
for all four cases $\Zcal=\Vcal,\nabla \Vcal, \Wcal,\Xcal$, where $\Xi_n=\Phi_n, \nabla \Phi_n, \Psi_n,\Upsilon_n$ is the relevant polynomial PDE solution satisfying~\eqref{eq:reg_function},  \eqref{eq:reg_function:div}, or~\eqref{eq:reg_function:graddiv} and $\Bcal_{\Zcal}[f_n,\Xi_n](x)$ gathers all boundary potentials involved in~\eqref{V:IPP}, \eqref{Vgrad:IPP}, \eqref{W:IPP} or~\eqref{X:IPP}.\enlargethispage*{10ex}

\subsection{Evaluation of $\protect\ZcalB[f\shm f_n](x)$}

The proposed ``puncturing approximation'' \eqref{Z:punct} features, as will be shown, nonsingular integrals, and thus only requires a quadrature rule for smooth functions defined on the elements $K_\ell\shin\Tcal_h$, $\ell\sheq1,\ldots,L$. Volume integrals of sufficiently regular functions $\phi$ can be expressed via the sum
\begin{equation}\label{eq:tri_smooth_integral_decomp}
  \int_\Omega \phi(y) \dy = \sum_{\ell=1}^L \int_{\KHat} \phi({T}_\ell({\yHat})) \left|\operatorname{det} \mathsf J_\ell({\yHat})\right|\de\yHat,
\end{equation}
over all $L$ elements $K_{\ell}$ of $\Tcal_h$, where each element $K_{\ell}$ is defined via a mapping $T_\ell:\KHat\to K_{\ell}$ from the reference tetrahedron $\KHat$ and $\mathsf J_\ell$ is the Jacobian matrix of $T_\ell$. Quadrature for each of the integrals over $\KHat$ in~\cref{eq:tri_smooth_integral_decomp} is performed in this work via a 3D extension~\cite{gimbutas:25} of the Vioreanu-Rokhlin quadrature method~\cite{Vioreanu:14}, which uses $q_n$ quadrature nodes $\yHat_{j} \in \KHat$ (called 3D-VR nodes thereafter) and positive weights $\widehat{\omega}_j$; in the sequel we denote this quadrature rule applied to a function $\psi: \KHat \to \C$ as $\widehat{\mathcal{Q}}^m[\psi]$. Such quadrature rule is capable of exactly integrating polynomials of total degree up to $m=m(n)>n$ (some values of $m$ are tabulated in Table~\ref{3D-VR:m}). Moreover, since $q_n=\text{dim}(\Pcal_n)$ (see~\eqref{qn:expr}), 3D-VR nodes also conveniently function as interpolation nodes for constructing $n$-th degree density interpolants. Letting $N = L q_n$, real-space nodes are obtained via the mappings $\{T_\ell\}_{\ell=1}^L$ yielding the node-weight set
\begin{equation}\label{eq:globnodesweights}
    \upvartheta := \{(\xi_r, \upomega_r)\}_{r=1}^N
        = \bigcup_{\ell = 1}^L \Lcb \left({T}_\ell(\yHat_j),\ \widehat{\omega}_j \,\labs \operatorname{det} {\mathsf J}_\ell(\yHat_j)|\right) \Rcb_{j=1}^{q_n}
\end{equation}
of global quadrature nodes and weights, and the quadrature approximation of~\eqref{eq:tri_smooth_integral_decomp} can thus be written as
\begin{equation}\label{eq:standard_QR}
  \int_\Omega \phi(y)\de y \approx
  \Qcal^{h,m}_\Omega[\phi] \coloneqq \sum_{j\in\rmQ(\OO)} \upomega_j \phi(\vv{\xi}_j) = \sum_{j=1}^N\upomega_j\phi(\vv\xi_j),
\end{equation}
where the set $\rmQ(E):=\lcb j\,\labs (\vv{\xi}_j,\upomega_j)\shin\upvartheta,\vv{\xi}_j\shin E\,\rcb$ collects the indices of all quadrature nodes lying in a subset $E$ of $\OO$. The notations set in~\eqref{eq:standard_QR} allow to refer to quadrature approximations $\Qcal_{E}$ of integrals over mesh-conforming subsets $E\shsubs\Omega$.
With the foregoing definitions, the proposed quadrature approximation of $\ZcalB[f\shm f_n]$ is
\begin{multline}
  \ZcalB^{h,m}[f\shm f_n(\dotp,K)]
 := \Qcal^{h,m}_\Omega\lsqb \sfKBr(x,\dotp) [f - f_n(\dotp; K)] \rsqb \\
 = \Qcal^{h,m}_{\Omega\setminus K}\lsqb \sfKBr(x,\dotp) [f - f_n(\dotp; K)] \rsqb
\end{multline}

The error analysis in~\cite{anderson2024fast} for the evaluation of quantities analogous  to $\ZcalB[f-f_n](x)$ relies on   quadrature estimates for approximating the left-hand side
of~\cref{eq:standard_QR} by $\mathcal{Q}_\Omega[\phi]$ where, for each $x$, $\phi(y) = \sfKBr(x, y) [f(y) - f_n(y; K)]$. Allowing curvilinear transformations obstructs this, as the classical composite estimates~\cite{IsaacsonKeller} require the ability to integrate exactly polynomials of a certain degree, a property that does not hold when $T_\ell$ is not affine as $\left|\operatorname{det} \mathsf J_\ell\right|$ is then not a constant. This work instead develops estimates for $D^\alpha \phi(T_\ell(\hat{y}))$ and subsequent high-order quadrature error estimates for the approximation~\cref{eq:standard_QR} by means of classical estimates for the quadrature rule $\widehat{\mathcal{Q}}^m$. The essential property for high-order accuracy is that $\|D^\alpha T_\ell\|_2 \lesssim h^{|\alpha|}$. The maps $T_\ell$ themselves are described and their properties are established in the following section; suitable composite error estimates are developed in \Cref{sec:analysis}.\enlargethispage*{1ex}

\begin{table}[t]
\caption{Degree of exactness $m$ for 3D-VR quadrature rules~\cite{gimbutas:25} using $q_n$ nodes, with $q_n$ given by~\eqref{qn:expr}.\label{3D-VR:m}}\centering
\begin{tabular}{|c|ccccccccccc|}\hline
 $n$ & 0 & 1 & 2 & 3 & 4 & 5 & 6 & 7 & 8 & 9 & 10 \\ \hline
 $m$ & 1 & 2 & 3 & 5 & 6 & 7 & 9 & 10 & 11 & 13 & 15 \\ \hline
\end{tabular}
\end{table}

\subsection{Curved-element mesh construction for piecewise-smooth domains}
The method and convergence analysis relies on elementary quadratures over domains discretized with straight and curved elements, that is, using a family $\Tcal$ of meshes $\Tcal_h$ that satisfies the usual quality constraints~\cite{Sauter2010}. Throughout this section, $h_K$ will denote the diameter of an element $K$ and $\rho_K$ will denote the supremum of the diameter of spheres contained within $K$.
\begin{definition}\label{def:regularuniform}
For a family of tetrahedralizations of $\Omega$, or mesh family, denoted by $\Tcal = \{ \Tcal_h \}_{h>0}$ with $h>0$ denoting the maximum element size in $\Tcal_h$, let the shape-regularity constant $\kappa_{\Tcal}$ and the quasi-uniformity constant $q_\Tcal$ of $\Tcal$ be defined by
\begin{equation}\label{eq:shaperegulardef:quasiuniformdef}
  \kappa_{\Tcal} := \sup_{h>0}\max_{K\in\Tcal_h}\frac{h_K}{\rho_K}, \qquad
        q_\Tcal := \sup_{h>0}\frac{h}{\min_{K\in\Tcal_h}h_K}.
\end{equation}
The family $\Tcal$ is called \emph{shape-regular} if $\kappa_{\Tcal}$ is finite, and \emph{quasi-uniform} if $q_\Tcal$ is finite.
\end{definition}
Note that \Cref{def:regularuniform} applies equally to families with straight and curved elements. While $\kappa_{\Tcal}$ and $q_{\Tcal}$ represent the primary figures of merit for families of straight meshes, we require significantly more information.
We allow a mesh family $\Tcal = \{\Tcal_h\}_{h>0}$ to include curved mesh
elements with edges and possibly faces lying on a sufficiently piecewise-smooth boundary $\Gamma$, closely following~\cite{lenoir1986optimal,bernardi1989optimal}; we assume, as can be ensured by careful attention in constructing the straight mesh, that each element intersecting the boundary does not contain in the interior of a face a point of discontinuity. As is typically done (with the interesting exception of the recent work~\cite{ishizaka2026exact}), curved mappings from the reference $\hat{K}$ are constructed using the sum of an affine transformation, which gives the straight $\widetilde{K}$ that approximates the element, and a nonlinear perturbation to yield the final curved element $K$. In what follows, we summarize the construction of curved elements in Definitions~\ref{def:curved0} and~\ref{def:curved} and then present the regularity of the mappings in Theorem~\ref{thm:mapping_properties} and Corollary~\ref{cor:curvilinear}.\enlargethispage*{1ex}

\begin{definition}\label{def:curved0}
  A closed set $K \in \R^d$ is a \emph{curved $d$-simplex} if there exists a $C^1$ mapping $T_K: \hat{K} \to K$ of the form
  \begin{equation}\label{eq:Tk_decomp}
    T_K(\widehat{x}) \coloneqq \widetilde{F}_K(\widehat{x}) + \Phi_K(\widehat{x}), \qquad \widehat{x}\in\Khat
  \end{equation}
  where $\widetilde{F}_K(\widehat{x}) = \widetilde{B}_K \widehat{x} + \widetilde{b}_K$ is affine ($\widetilde{B}_K \in \Rbb^{3\times3}$ invertible satisfies~\cite{Ciarlet:72} the relation $\|\widetilde{B}_K\|_2 \le \frac{h_K}{\rho_{\KHat}}$) and $\Phi_K$ satisfies
  \begin{equation}\label{eq:mesh_reg_Ck_def}
    c_K \coloneqq \sup_{\widehat{x} \in \KHat} \|D\Phi_K(\widehat{x}) \cdot \widetilde{B}_K^{-1}\|_2<1.
  \end{equation}
  It follows that $T_K$ is invertible with $C^1$ inverse~\cite[Lem.\ 2.1]{bernardi1989optimal} and
  \begin{equation}\label{eq:curved_mesh_jac}
    \forall \widehat{x} \in \KHat,\quad (1 - c_K)^3 |\det \widetilde{B}_K| \le |\det DT_K(\widehat{x})| \le (1 + c_K)^3 |\det \widetilde{B}_K|.
  \end{equation}
We say that a family $\Tcal$ is \emph{regular} if there exists an $h_0>0$ such that
  \begin{equation}\label{eq:mesh_reg}
    c_1(\Tcal) \coloneqq \sup_{h\le h_0} \sup_{K \in \Tcal_h} c_K < 1.
  \end{equation}
A regular family $\Tcal$ is additionally \emph{regular of order $\theta$}, $\theta \in \N$, if (a) $F_K \in C^{\theta + 1}$ and (b) setting
  \[
    c_\ell(K) \coloneqq \sup_{\widehat{x} \in \KHat} \left\|D^\ell T_K(\widehat{x})\right\|_2 \cdot \lnrm \widetilde{B}_K \rnrm^{-\ell}_2, \; 2 \le \ell \le \theta + 1,\quad\mbox{for each}\quad K \in \Tcal_h \in \Tcal,
  \]
it holds that
  \begin{equation}\label{eq:curved_mesh_reg}
    c_\ell(\Tcal) \coloneqq \sup_{h \le h_0} \sup_{K \in \Tcal_h} c_\ell(K) < \infty, \quad 2 \le \ell \le \theta + 1.
  \end{equation}
\end{definition}

\begin{definition}[{\hspace{1sp}\cite{bernardi1989optimal}}]\label{def:curved}
  Let $\theta \in \N$. Let $\Gamma$ denote the
  boundary of a bounded region $\Omega \subset \R^3$, such that there exist a
  finite number of charts $\psi: \omega \to \R^3$, each with possibly different
  bounded domain $\omega \subset \R^2$, with $\psi \in C^{\theta
  + 1}(\bar{\omega})$ and together satisfying $\Gamma = \cup_{\psi}
  \{\psi(\xi), \xi \in \bar{\omega}\}$. Let $\{\tau_h\}_h$ denote a family of triangularizations of $\omega$, regular of order $\theta$, and let $\{\widetilde{\Tcal}_h\}_h$
  denote a family of tetrahedralizations of $\Omega$ with straight tetrahedra and
  assume that for each of the nodes $\alpha_i$ in $\tau_h$, $\psi(\alpha_i)$ is
  a vertex of $\widetilde{\Tcal}_h$; for each $\widetilde{K} \in \widetilde{\Tcal}_h$ which has two vertices on $\Gamma$, those vertices lay on the same chart; and  the set of all vertices $\widetilde{\Tcal}_h$ that lay on $\Gamma$ is the set $\cup_\psi \{\psi(\alpha): \alpha\mbox{ a vertex of } \kappa \in \tau_h\}$. If $\psi$ and $\psi'$ are two different charts with respective domains $\omega$ and $\omega'$, and $\{\tau_h\}$ and $\{\tau_h'\}_h$ denote families of exact triangulations of $\omega$ and $\omega'$, respectively, require that for, for each $h$, $\{\psi(\alpha), \alpha\mbox{ vertex of } \kappa \in \tau_h\} \cap \psi(\bar{\omega}) \cap \psi'(\bar{\omega}')$ and $\{\psi'(\alpha'), \alpha'\mbox{ vertex of } \kappa' \in \tau_h'\} \cap \psi(\bar{\omega}) \cap \psi'(\bar{\omega}')$ coincide.
  
  If at most one vertex of $K$ lays on $\Gamma$ then we set $\Phi_k \coloneqq 0$, hence $T_K \coloneqq \widetilde{F}_K$ and $K \coloneqq \widetilde{K}$, a straight simplex.
  
  Otherwise, $K$ has $j \in \{2, 3\}$ vertices on $\Gamma$. We then let $f_\kappa$ denote a (not necessarily unique) map $f_{\kappa}: \Delta^2 \to \kappa \subset \R^2$ which maps the reference
  $2$-simplex onto the pre-image of the set of vertices of a curved (face)
  triangle $\upsilon \subset \Gamma$ which contains all vertices in $K \cap
  \Gamma$ and which satisfies $f_\kappa(\hat{\alpha}_i) = \alpha_i$ for $i=1,\ldots,3$.
  Let $\theta \in \N$ and let $\pi_{\kappa}^l$, $1 \le l \le \theta$, denote~\cite[\S 6]{bernardi1989optimal} the classical
  $l$\textsuperscript{th}-order interpolation operator on $\kappa$ with data at the $\binom{l+2}{l}$ points on the
  so-called principal lattice of $\Delta^2$~\cite{nicolaides1972class}. The nonlinear deviation from affine $\Phi_K$ in~\eqref{eq:Tk_decomp} is then set to
\begin{multline}
    \Phi_K(x_1, x_2, x_3) \\
 \coloneqq \Big\{ (x_1 + \cdots +  x_j)^{\theta + 2} (\psi - \pi_{\kappa}^\theta \psi)
    + \sum_{l=2}^\theta (x_1 + \cdots + x_j)^l(\pi_{\kappa}^l\psi - \pi_{\kappa}^{l-1} \psi) \Big\}\\
    \circ f_{\kappa}\big(\,(x_1 \hat{\alpha}_1 + \cdots + x_j \hat{\alpha}_j)/(x_1 + \cdots + x_j)\,\big),
\end{multline}
with the meaning of $\pi_\kappa^\theta \psi \circ f_\kappa(x)$ for $x \in \Delta^2$ as in~\cite[\S 2]{lenoir1986optimal}.
\end{definition}
\begin{remark}\label{rem:curved}
  In treating the arbitrary-dimensional case,
  reference~\cite{bernardi1989optimal} reasons from recursion in dimension that
  one always has available an `exact' mesh $\tau_h$ of a \emph{possibly-curved}
  chart domain $\omega$, and for every element in $\tau_h$ a map $f_\kappa$ that is regular of order $\theta$ (in the sense of \Cref{thm:mapping_properties}). Such a mesh may be nontrivial to construct from first principles; our implementation assumes a polygonal domain $\omega$ for each chart
  $\psi$, and is considerably simpler. However, and actually most
  practical in a setting where mesh generation software will readily produce a
  straight tetrahedralization $\widetilde{\mathcal{T}}_h$ of $\Omega$, we use the vertices in $\widetilde{\mathcal{T}}_h \cap \Gamma$ to define, via their pre-image,
  a straight triangularization of a chart mesh $\tau_h$---the resulting mesh then
  implicitly defines a polygonal domain $\omega$. As a result, the map $f_\kappa$ satisfies $f_\kappa: \Delta^2 \to \psi^{-1}(\upsilon)$. With this approach, if multiple charts are required then one of the domains may contain elements with non-affine maps $f_\kappa$, i.e. maps to the domain of the chart that are curvilinear reflecting curved mesh elements in the mesh $\tau_h$ of the chart domain; the implementation in \texttt{Inti.jl} does not explicitly treat this and this extension is left for future work, with all numerical examples on curved domains in this paper therefore requiring only a single chart.
\end{remark}

The resulting curved simplices possess the following regularity properties, which will be useful
in recovering high-order estimates.
\begin{theorem}[{\hspace{1sp}\cite[Thm.\ 6.2, Corr.\ 6.2]{bernardi1989optimal}}]\label{thm:mapping_properties}
  Let $\theta \in \N$ be given and suppose the boundary $\Gamma$ is piecewise-$C^{\theta+1}$. Then $\Phi_K: \KHat \to \R^3$ is $C^{\theta + 1}$ and satisfies
  \begin{equation}
    \sup_{\widehat{x} \in \KHat} \left\|D^\alpha \Phi_K(\widehat{x})\right\|_2 \lesssim h_{\kappa}^{\max(2, |\alpha|)}, \quad 1 \le |\alpha| \le \theta + 1,
  \end{equation}
  where $h_\kappa$ denotes the diameter of the mesh element in the chart domain $\omega$.
  Moreover, there exists $h_0>0$ such that the resulting mesh family $\{\Tcal_h\}_{h \le h_0}$ is regular of order $\theta$.
\end{theorem}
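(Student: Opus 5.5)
The plan is to argue directly from the explicit formula for $\Phi_K$ in \Cref{def:curved}, following the strategy of~\cite[\S 6]{bernardi1989optimal}. We write $\lambda := x_1+\cdots+x_j$ and $\hat{z} := (x_1\hat\alpha_1+\cdots+x_j\hat\alpha_j)/\lambda \in \Delta^2$. Each term of $\Phi_K$ then has the form $\lambda^{l}\, (w_l\circ f_\kappa)(\hat z)$, where $w_l = \pi_\kappa^l\psi-\pi_\kappa^{l-1}\psi$ for $2\le l\le\theta$, and $w_{\theta+2}=\psi-\pi_\kappa^\theta\psi$.

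\emph{Step 1: interpolation error estimates.} Since $f_\kappa$ is regular of order $\theta$ (affine in our implementation, cf.\ \Cref{rem:curved}), we set $\hat w_l := w_l\circ f_\kappa$ on $\Delta^2$. Classical Bramble--Hilbert/Ciarlet--Raviart estimates for $\psi\in C^{\theta+1}$ give
\[
\sup_{\Delta^2}|D^\beta(\psi\circ f_\kappa - \pi^{l}(\psi\circ f_\kappa))| \lesssim h_\kappa^{l+1} \quad\text{for } |\beta|\le l+1 .
\]
Consequently $\|D^\beta \hat w_l\|\lesssim h_\kappa^{l}$ for $2\le l\le\theta$, and $\|D^\beta\hat w_{\theta+2}\|\lesssim h_\kappa^{\theta+1}$, for all $|\beta|\le\theta+1$. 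In particular every term is $\Ocal(h_\kappa^2)$ together with all derivatives. The nontrivial interpolant $\pi^1$ does not appear, since the affine part is carried by $\widetilde F_K$.

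\emph{Step 2: differentiating through the singular projection.} The map $x\mapsto\hat z$ is homogeneous of degree $0$ in $(x_1,\dots,x_j)$. Hence $D^\gamma \hat z = \Ocal(\lambda^{-|\gamma|})$, and this is singular at the face $\lambda=0$. This is the main obstacle, and I would handle it exactly by the weights $\lambda^l$. Applying Leibniz and Faà di Bruno, $D^\alpha[\lambda^l \hat w_l(\hat z)]$ is a sum of terms $\lambda^{l-|\alpha_1|}\,\lambda^{-|\alpha_2|}\,(D^{k}\hat w_l)(\hat z)$ with $|\alpha_1|+|\alpha_2|=|\alpha|$ and $k\le|\alpha_2|$. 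These are bounded whenever $|\alpha|\le l$. If $|\alpha|>l$, they are bounded only after using extra vanishing: $\hat w_l$ vanishes at the principal-lattice nodes of order $l-1$. A cleaner route is to use the telescoping identity of~\cite{lenoir1986optimal} to rewrite the sum so that derivatives of order exceeding $l$ fall on higher-order differences. To make this concrete, I would first prove the bound on the segment-wise (radial) structure: along rays from the opposite vertex, $\hat z$ is constant. I would then show $\sup|D^\alpha\Phi_K|\lesssim h_\kappa^{\max(2,|\alpha|)}$ using the estimate $|D^k\hat w_l|\lesssim h_\kappa^{\max(l,k)}$. Each factor $\lambda^{-1}$ is paired with a derivative of $\hat w$ that gains one power of $h_\kappa$, and the final term with exponent $\theta+2$ absorbs all $|\alpha|\le\theta+1$ derivatives, keeping one factor of $\lambda$ so that it stays bounded. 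Collecting the powers gives the claimed $h_\kappa^{\max(2,|\alpha|)}$, and the $C^{\theta+1}$ regularity follows from the same expansion, including continuity up to $\lambda=0$.

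\emph{Step 3: regularity of the family.} Since $h_\kappa\simeq h_K$ by quasi-uniformity of the chart map and $\|\widetilde B_K^{-1}\|\lesssim \rho_K^{-1}\lesssim h_K^{-1}$ by shape regularity, Step 2 gives $c_K=\sup\|D\Phi_K\widetilde B_K^{-1}\|\lesssim h_K^2/h_K = \Ocal(h)$. This is $<1$ for $h\le h_0$, which yields~\eqref{eq:mesh_reg}. For $2\le\ell\le\theta+1$ we have $D^\ell T_K = D^\ell\Phi_K$, so $\|D^\ell T_K\|\,\|\widetilde B_K\|^{-\ell}\lesssim h^{\ell}\cdot h^{-\ell}$ up to shape constants, using $\|\widetilde B_K\|\gtrsim h_K$. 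This is bounded, which gives~\eqref{eq:curved_mesh_reg}. The required condition $T_K\in C^{\theta+1}$ comes from Step 2. Hence the family is regular of order $\theta$.
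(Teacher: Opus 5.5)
\textbf{There is a genuine gap in Step 2.} Steps 1 and 3 are fine. So is your treatment of the last term $\lambda^{\theta+2}\hat w_{\theta+2}(\hat z)$: degree-$0$ homogeneity gives $|D^m\hat z|\lesssim\lambda^{-m}$, so each derivative of order $|\alpha|\le\theta+1$ is $O(\lambda^{\theta+2-|\alpha|}h_\kappa^{\theta+1})$. That is bounded, tends to zero as $\lambda\to0$, and gives the $C^{\theta+1}$ extension.

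The gap is in the interpolation-difference terms $\lambda^l\hat w_l(\hat z)$, $2\le l\le\theta$, when $l<|\alpha|\le\theta+1$. This already occurs for $\theta\ge2$, e.g.\ $l=2$, $|\alpha|=3$. Your Leibniz/Fa\`a di Bruno expansion then produces terms of size $\lambda^{l-|\alpha|}h_\kappa^l$. These are unbounded near $\{\lambda=0\}$ (the vertex or edge of $\KHat$ opposite the boundary vertices) for every fixed $h$, and none of your remedies removes this:
\begin{itemize}
\item Powers of $h_\kappa$ cannot compensate negative powers of $\lambda$, which is a reference coordinate independent of $h$. Also, derivatives of $\hat w_l$ on $\Delta^2$ do not gain extra powers of $h_\kappa$ (they are all $O(h_\kappa^l)$), so the proposed ``pairing'' does not exist.
\item The claim that $\hat w_l$ vanishes on the order-$(l-1)$ principal lattice is false for $l\ge3$, since the lattices are not nested. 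In any case, vanishing at finitely many points of $\Delta^2$ could not cancel a blow-up that occurs as $\lambda\to0$ for every value of $\hat z$.
\item The ``telescoping'' and ``radial structure'' remarks are never turned into an estimate.
\end{itemize}
As written, Step 2 therefore establishes neither the derivative bound nor $\Phi_K\in C^{\theta+1}$.

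\textbf{The missing idea is that these terms are polynomials.} There is no singularity; the termwise singular contributions cancel. The argument runs as follows.
\begin{itemize}
\item $\hat w_l$ is a polynomial of degree at most $l$ on $\Delta^2$, and the barycentric coordinates of $\hat z$ are $x_i/\lambda$, $1\le i\le j$. For $j=2$, $\hat z$ lies on an edge and only the two edge coordinates enter.
\item Since barycentric coordinates sum to one, you can homogenize $\hat w_l$ into a homogeneous polynomial $P_l$ of degree $l$ in them. Then $\lambda^l\hat w_l(\hat z)=P_l(x_1,\ldots,x_j)$ is a homogeneous polynomial of degree $l$ on $\KHat$.
\item Its coefficients depend linearly on $\hat w_l$. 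By equivalence of norms on the finite-dimensional space of polynomials of degree $\le l$ on $\Delta^2$, together with your Step 1, they are $O(h_\kappa^l)$.
\item Hence each such term is $C^\infty$, its derivatives of order $>l$ vanish identically, and those of order $\le l$ are $O(h_\kappa^l)\le O(h_\kappa^{\max(2,|\alpha|)})$.
\end{itemize}
This is why $\widetilde F_K$ plus these terms is Lenoir's degree-$\theta$ isoparametric map. It is also the structure underlying the result the paper cites from Bernardi. Only the final term is non-polynomial, and there your homogeneity argument is the right one. With this replacement in the middle of Step 2, your Steps 1 and 3 complete the proof.
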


We collect some facts that immediately follow which will be useful in the sequel.

\begin{corollary}\label{cor:curvilinear}
    Let $\theta \shin \N$ be given and let $\Gamma$ be piecewise-$C^{\theta+1}$. There exists $h_0 \shg 0$ such that $\Tcal = \{\Tcal_h\}_{h\le h_0}$ is regular of order $\theta$ and such that for each element $K \in \Tcal_h \in \Tcal$ with map $T_K$ provided by~\cref{eq:Tk_decomp}, each component $[T_K]_i$, $i \in \{1, 2, 3\}$ of $T_K$ satisfies
    \begin{equation}\label{eq:Dalpha_TK}
      \sup_{\widehat{x} \in \KHat} \left|D^\alpha [T_K(\widehat{x})]_i\right| \le C_1 h^{|\alpha|}, \quad |\alpha| \le \theta + 1,\, C_1 = C_1(\Gamma, \kappa_\Tcal, c_{|\alpha|}(\Tcal), |\alpha|),
    \end{equation}
    and the Jacobian map satisfies
    \begin{equation}\label{eq:Jac_TK}
        \forall \widehat{x} \in \KHat,\quad \quad C_2 h^3 \le \left|\det T_K(\widehat{x})\right| \le C_3 h^3, \quad C_j = C_j(\kappa_\Tcal, c_1(\Tcal)) > 0.
    \end{equation}
    Moreover, using a mesh family $\mathcal{T}$ that is regular of order $\theta$ and a quadrature rule of degree of exactness $m$ on reference elements results in an order of convergence of $\min(m+1, \theta+1)$ for sufficiently smooth integrands.
\end{corollary}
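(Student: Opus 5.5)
The corollary follows from \Cref{thm:mapping_properties} combined with the decomposition~\eqref{eq:Tk_decomp}, \Cref{def:curved0} and the mesh-quality constants of \Cref{def:regularuniform}. I will treat the derivative bound, the Jacobian bound and the quadrature order in turn.

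\emph{Derivative bound~\eqref{eq:Dalpha_TK}.} I split $T_K=\widetilde{F}_K+\Phi_K$. For $|\alpha|=0$ the bound is trivial after a translation of coordinates, since $\Omega$ is bounded; only $|\alpha|\ge1$ matters in what follows. For $|\alpha|=1$, the affine part contributes $\|\widetilde{B}_K\|_2\le h_K/\rho_{\KHat}\lesssim h$. The perturbation contributes $\|D\Phi_K\|_2\lesssim h_\kappa^2$ by \Cref{thm:mapping_properties}, or alternatively at most $c_K\|\widetilde{B}_K\|_2<\|\widetilde{B}_K\|_2$ by~\eqref{eq:mesh_reg_Ck_def}. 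For $|\alpha|\ge2$ the affine part vanishes, and $\|D^\alpha T_K\|_2\le c_{|\alpha|}(\Tcal)\|\widetilde{B}_K\|_2^{|\alpha|}\lesssim h^{|\alpha|}$ by regularity of order $\theta$~\eqref{eq:curved_mesh_reg}. Here I rely on \Cref{thm:mapping_properties} to supply $h_0$ such that the family is regular of order $\theta$. One should check that $h_\kappa\lesssim h$: the chart $\psi$ is $C^{\theta+1}$ with a Lipschitz inverse on its image, so chart-domain diameters are comparable to the diameters of the corresponding boundary faces, hence $\lesssim h$. This is where the dependence of $C_1$ on $\Gamma$ enters. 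Component bounds then follow from the operator-norm bounds.

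\emph{Jacobian bound~\eqref{eq:Jac_TK}.} I read $\det T_K$ as $\det DT_K$ and use~\eqref{eq:curved_mesh_jac} with $c_K\le c_1(\Tcal)<1$. It remains to bound $|\det\widetilde{B}_K|=|\widetilde K|/|\KHat|$ from both sides. Above, $|\widetilde K|\le h_K^3\le h^3$. Below, $|\widetilde K|\gtrsim\rho_{K}^3\ge(h_K/\kappa_\Tcal)^3\ge h^3/(\kappa_\Tcal q_\Tcal)^3$. Strictly, this step needs quasi-uniformity, i.e.\ a dependence on $q_\Tcal$, or the bound should be stated with $h_K$ in place of $h$; I would point this out and use $h_K$ as the natural scale.

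\emph{Quadrature order.} This is the main step. On each element, the error of $\widehat{\Qcal}^m$ applied to $g:=\phi\circ T_K\,|\det DT_K|$ is bounded by the classical Bramble--Hilbert/Peano estimate on $\KHat$. With $s:=\min(m+1,\theta+1)$, exactness for $\Pcal_m\supset\Pcal_{s-1}$ gives an error $\lesssim|\KHat|\,\sup_{|\beta|=s}|D^\beta g|$. I then expand $D^\beta(\phi\circ T_K)$ by the Fa\`a di Bruno formula. Each term is a derivative $D^\gamma\phi$ times a product of derivatives $D^{\beta_i}T_K$ with $\sum|\beta_i|=|\beta|$, so by~\eqref{eq:Dalpha_TK} every term is $\lesssim h^{|\beta|}\|\phi\|_{C^{s}}$. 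This requires $|\beta_i|\le\theta+1$, which is why $s$ is capped at $\theta+1$. Similarly $\det DT_K$ is a cubic polynomial in first derivatives of $T_K$, so Leibniz and~\eqref{eq:Dalpha_TK} give $|D^\beta\det DT_K|\lesssim h^{3+|\beta|}$, valid for $|\beta|\le\theta$. Since the absolute value is harmless because the determinant has a fixed sign by~\eqref{eq:curved_mesh_jac}, the Leibniz rule yields $|D^\beta g|\lesssim h^{3+s}$. The case $s=\theta+1$ needs $\theta+1$ derivatives of $\det DT_K$, i.e.\ $\theta+2$ of $T_K$. To stay within the available regularity, I would first try a Peano estimate of order $s$ using $C^{s-1,1}$ regularity of $g$, or equivalently accept order $\theta+1$ only when $m\le\theta$ and otherwise order $\theta$ plus the extra exactness. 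I expect this bookkeeping to be the only delicate point. Summing over the $\mathcal{O}(h^{-3})$ elements gives a global error of $\mathcal{O}(h^{s})$.
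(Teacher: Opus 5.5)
Your derivations of \eqref{eq:Dalpha_TK} and \eqref{eq:Jac_TK} are correct. They are exactly the ``immediate'' consequences of \Cref{def:curved0}, \eqref{eq:curved_mesh_jac} and \Cref{thm:mapping_properties} that the paper has in mind; it gives no further proof. You are also right that the lower bound in \eqref{eq:Jac_TK}, stated with $h$ rather than $h_K$, needs $q_\Tcal$.

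\textbf{The gap.} It lies in the quadrature-order claim for $m\ge\theta$, i.e.\ the case $s=\theta+1$.
\begin{itemize}
\item You sum one uniform per-element bound over all $\Ocal(h^{-3})$ elements. Reaching $h^{\theta+1}$ that way requires $|D^\beta \det DT_K|\lesssim h^{4+\theta}$ for $|\beta|=\theta+1$, i.e.\ control of $\theta+2$ derivatives of $T_K$. Regularity of order $\theta$ does not provide this.
\item Your suggested Peano bound with $C^{s-1,1}$ regularity would need $D^{\theta+1}T_K$ to have Lipschitz constant of order $h^{\theta+2}$. This is not implied by the hypotheses, and for the Lenoir--Bernardi map it fails. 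The blending factor $(x_1+\cdots+x_j)^{\theta+2}$ multiplies $\psi-\pi_\kappa^\theta\psi=\Ocal(h^{\theta+1})$, so $D^{\theta+2}\Phi_K$ is generically only $\Ocal(h^{\theta+1})$. For $\theta=1$, the leading part of an edge term is $\epsilon\,(x_1+x_2)x_1x_2$ with $|\epsilon|\sim h^2$, whose third derivatives are of size $h^2$, not $h^3$. So no extra power per curved element is available.
\item Your fallback is backwards: $m=\theta$ is precisely the problematic case, and settling for order $\theta$ when $m>\theta$ gives up the claim.
\end{itemize}
As it stands, your argument proves the rate $\min(m+1,\theta)$, not $\min(m+1,\theta+1)$.

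\textbf{The missing idea: a boundary-layer count.}
\begin{itemize}
\item By \Cref{def:curved}, only elements with at least two vertices on $\Gamma$ are curved. They lie within distance $h$ of $\Gamma$, a region of volume $\Ocal(h)$. By shape-regularity and quasi-uniformity each has volume $\gtrsim h^3$, so there are only $\Ocal(h^{-2})$ of them.
\item On straight elements $\det DT_K$ is constant and $\theta$ plays no role, so their total contribution is $\Ocal(h^{m+1})$.
\item On curved elements, use only the regularity actually available. Set $k=\min(m+1,\theta)\le m+1$. Your Fa\`a di Bruno and Leibniz bounds give $|D^\beta g|\lesssim h^{3+k}$ for $|\beta|=k$, using at most $\theta+1$ derivatives of $T_K$. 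The per-element error is therefore $\Ocal(h^{3+k})$, and the total over curved elements is $\Ocal(h^{1+\min(m+1,\theta)})$.
\end{itemize}
Adding the two contributions gives $\Ocal(h^{\min(m+1,\theta+1)})$. The cap at $\theta+1$ rather than $\theta$ comes exactly from the factor $h^{-2}$ in place of $h^{-3}$.

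In the paper's later use (the proof of \Cref{lem:ordquad_convergence_farfield_optimal}), the stronger assumption $\theta\ge m+1$ is imposed. In that regime your uniform argument already suffices.
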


\subsection{Fast summation-compatible approximation formula} For each $K\in\Tcal_h$, the density interpolant $f_n=f_n(\cdot;K)$ of $f$ over $K$ is, in principle, given by
\begin{equation}
  f_n(y:K) = \sum_{i=1}^{q_n} f(x_i)\lambda_i(y;K), \label{f_n:lagr}
\end{equation}
in terms of the Lagrange polynomials $\lambda_i(y;K)$ associated with the interpolation node set $\Ical_n\sheq\lcb x_i: x_i\shin K \rcb_{i=1}^{q_n}$. The 3D-VR quadrature nodes of $K$ may be (and, in this work, are) used as interpolation nodes, setting $x_i\sheq\xi_i,\ i\shin\rmQ(K)$ (see~\eqref{eq:standard_QR}). A significant difficulty in using local interpolants such as~\eqref{f_n:lagr} is that the volume integral yielding $\ZcalB[f_n](x)$ in~\eqref{Z:punct} and the boundary integrals yielding $\Bcal_{\Zcal}[f_n,\Xi_n](x)$ in~\eqref{Z:reg} both depend on the target point $x\shin K$ explicitly through the Green's function, but also implicitly through $f_n(\cdot; K)$. Clearly, evaluating the volume integral $\Vcal[f - f_n(\cdot; K)](x)$ over $\Omega$ at all target points $x\shin K$ for every $K\shin\Tcal_h$ by means of the quadrature rule~\eqref{eq:standard_QR},  would result in a highly inefficient algorithm even using fast methods. Similar issues would affect the layer potentials in~\eqref{V:IPP}, \eqref{W:IPP2} and~\eqref{X:IPP}.

To remedy this potential bottleneck, we use the binomial theorem to express every \emph{local} interpolant $f_n(\cdot; K)$ in a \emph{fixed, global} basis of normalized monomials, i.e.,
 \begin{equation}\label{eq:mon_interp}
     f_n(y; K)=\sum_{|\alpha|\leq n}c_\alpha[f](K)p_\alpha(y),\qquad  p_\alpha(y) \coloneqq \frac{y^\alpha}{\alpha!},
\end{equation}
so that the local character of $f_n$ is confined to the coefficients $c_{\alpha}[f](K)$. In~\eqref{eq:mon_interp} and hereafter, we make use of the standard multi-index notation where, for any $\alpha = (\alpha_1,\alpha_2,\alpha_3)\in\N_0^3$, we set $\alpha! = \alpha_1!\alpha_2!\alpha_3!$, $|\alpha| = \alpha_1+\alpha_2+\alpha_3$, $y^\alpha=y_1^{\alpha_1} y_2^{\alpha_2} y_3^{\alpha_3}$ when $y=(y_1,y_2,y_3)\in\R^3$ and, for $\beta=(\beta_1,\beta_2,\beta_3)\in\N_0^3$ with $\beta\leq\alpha$ (i.e. $\beta_i\leq\alpha_i$),
\[
{\binom{\alpha}{\beta}}
 =\frac{\alpha!}{(\alpha-\beta)!\beta!}.
\]
For given $f$ and $K$, determining the coefficients $c_{\alpha}[f](K)$ entails solving linear systems with $q_n$ unknowns enforcing the interpolation conditions for each scalar component of $f$, at a $\Ocal(q_n^3)$ $f$-independent computational cost for setting up and factoring the dense matrix followed by a $\Ocal(q_n^2)$ $f$-dependent cost for back-substitution.\enlargethispage*{7ex}

Using~\eqref{eq:mon_interp} and for given $\OO$, we hence evaluate the term $\ZcalB[f_n(\cdot;K)](x)$ of the puncturing approximation~\eqref{Z:punct} as
\begin{equation}
  \ZcalB[f_n(\cdot;K)](x)
 \approx \sum_{|\alpha|\leq n}c_\alpha[f](K) \, \Qcal^{h,m}_{\Omega\setminus K}\lsqb \sfKBr(x,\dotp) p_\alpha \rsqb \label{Z:punct:sum}
\end{equation}
(applying the approximate quadrature~\eqref{eq:standard_QR}), with $\Qcal^{h,m}_{\OO\setminus K}[\sfKBr(x,\dotp) p_\alpha](x)$ precomputed once for each $p_{\alpha}$ and then re-used for each new source $f$, merely with updated weights $\{c_\alpha[f](K)\}_{|\alpha|\leq n}$. The regularizing term $\Zcal[f_n](x)$ of~\eqref{Z:split} is written likewise as
\begin{equation}
  \Zcal[f_n(\cdot;K)](x)
 = \sum_{|\alpha|\leq n}c_\alpha[f](K) \, \lcb  \mu(x)\Xi_{\alpha}(x) + \Bcal_{\Zcal}[p_{\alpha},\Xi_{\alpha}](x) \rcb \label{Z:reg:sum}
\end{equation}
with the generic notations of~\eqref{Z:reg}. Finally, using~\eqref{Z:punct} in~\eqref{Z:split} with $\ZcalB[f_n(\cdot;K)](x)$ and $\Zcal[f_n(\cdot;K)](x)$ respectively evaluated as indicated by~\eqref{Z:punct:sum} and~\eqref{Z:reg:sum}, the generic volume potential $\Zcal[f](x)$ is approximated as, denoting $\Bcal_{\Zcal}^h$ an approximation of $\Bcal_{\Zcal}$,
\begin{align}\label{Z:eval}
\Zcal[f](x)
 & \approx \ZcalB[f](x) - \ZcalB[f_n(\cdot;K)](x) + \Zcal\lsqb f_n(\cdot;K) \rsqb(x) \\
 &\approx \Qcal^{h,m}_\Omega[\sfKBr(x,\cdot)f] \notag \\ & \quad  + \sum_{|\alpha|\leq n}c_\alpha[f](K) \, \Lcb
    - \Qcal^{h,m}_{\OO\setminus K}[p_\alpha](x) + \mu(x)\Xi_{\alpha}(x) + \Bcal_{\Zcal}^h[p_{\alpha},\Xi_{\alpha}](x) \Rcb. \notag
\end{align}

\subsection*{Complexity estimation}
The proposed approximation formula~\eqref{Z:eval} leads to an algorithm with offline ($f$-independent) and online ($f$-dependent) components. The former can be accelerated with fast algorithms, while the $f$-dependent weights are \emph{efficiently computable} in the online phase. Its efficiency may be assessed by estimating the computational complexity of evaluating $\Zcal[f]$ at all the $N$-numbered volume quadrature nodes $\{\xi_j\}_{j=1}^N$. Table~\ref{tab:complexity-estimates} provides an overview of the computational costs associated with these evaluations, wherein the costs for evaluating $\Qcal_\Omega[\sfKBr(x,\cdot)f]$, $\Qcal_\Omega[\sfKBr(x,\cdot)p_\alpha]$ and $\Bcal_{\Zcal}^h[p_{\alpha},\Xi_{\alpha}](x)$ assume FMM-accelerated summations at the $N$ volume target points. The latter two tasks can be effected in embarrassingly-parallel fashion across the polynomial degree multi-index~$\alpha$. The FMM-accelerated general-purpose DIM~\cite{faria2021general} is utilized in this work to evaluate the boundary integrals $\Bcal_{\Zcal}$, with the boundary discretized using $N_b\propto N^{2/3}$ points. Evaluating the coefficients $c_\alpha[f](K)$, $K\in\Tcal_h$ has a cost of $\Ocal(q_n^3 L) = \Ocal(q_n^2 N)$. Setting up the interpolation system matrices for every mesh element costs $\Ocal(q_n^2 N)$, while the subsequent evaluation of $c_\alpha[f](K)$, $K\in\Tcal_h$, given $f$
costs $\Ocal(q_n N)$. Overall, the operation count estimate shows that, for a given interpolation degree $n\in\N_0$, the VDIM methodology achieves quasi-optimal complexity.

As $q_n=\Ocal(n^3)$, the computational costs however grow quickly with the chosen interpolation degree $n$, making our method best suited for moderate values of $n$; additionally, stability concerns discussed in~\cite[\S 8]{anderson2024fast} also limits the direct practical use of this method at arbitrarily high orders.\enlargethispage*{1ex}

\begin{table}[t]
\begin{center}
\begin{tabular}{cllc}
\toprule
 & \multicolumn{2}{c}{Task} & Cost \\\cmidrule{2-4}
  \multirow{2}{*}{$f$-dependent} & $\Qcal^{h,m}_\Omega[\sfKBr(\xi_j,\cdot) f]$, & $j=1,\ldots,N$ &
  $\Ocal(N{\log N})$  \\
  & $c_\alpha[f](K_\ell)$, & $\ell=1,\ldots,L$, $|\alpha|\leq n$ & $\Ocal(q_n N)$ \\
\cmidrule{2-2}\cmidrule{3-3}\cmidrule{4-4}
  \multirow{4}{*}{$f$-independent} & $\Qcal^{h,m}_\Omega[\sfKBr(\xi_j,\cdot)p_\alpha(\cdot)],$
  & $j=1,\ldots,N,$ $|\alpha|\leq n$ & $\Ocal(q_n N {\log N})$ \\
  & $\Bcal_{\Zcal}^h[p_{\alpha},\Xi_{\alpha}](\xi_j)$, & $j=1,\ldots,N,$ $|\alpha|\leq n$
  & $\Ocal(q_nN {\log N})$    \\
  & $\mu(\xi_j)P_\alpha(\xi_j)$, & $j=1,\ldots,N,$ $|\alpha|\leq n$ & $\Ocal(q_nN)$ \\
  & interpolation system, & $\ell=1,\ldots,L$ (setup) & $\Ocal(q_n^2N)$\\\bottomrule
\end{tabular}
\end{center}
  \caption{Computational costs for the online ($f$-dependent) and offline ($f$-independent) components of the method (both are required). Here $N = L q_n$ is the number of volume evaluation points in a mesh with $L$ elements, with the cardinality $q_n$ of the polynomial basis given by~\eqref{qn:expr}.}\label{tab:complexity-estimates}
\end{table}

\section{Error estimates for the global regularization}
\label{sec:analysis}

This section provides error estimates under mesh refinement for the puncturing-approximation component of the proposed VIO evaluation method~\eqref{Z:eval}, stated and discussed for convenience in terms of the generic VIO $\Zcal$. The analysis to follow applies to any volume potential whose kernel verifies the following admissibility condition:
\begin{definition}\label{def:kernel_bounds}
  A kernel function $\sfK$, possibly tensor-valued, is \emph{admissible of class
  $\sigma$} if, for each $R > 0$ there exists a positive integer $\sigma \leq 3$ and a constant $C_{\sfK,|\eta|,R}>0$ such that for each multiindex $\eta\in\Nbb_0^3$ and for every $x \in \R^3$ satisfies
  \begin{equation}\label{eq:greens_admissible}
    \left\|D_{y}^{\eta} \sfK(x,y)\right\| \le C_{\sfK,|\eta|,R} |x - y|^{-\sigma - |\eta|}, \quad y \in B_R(x)\setminus\{x\}.
  \end{equation}
If $\sigma\sheq3$, the Cauchy principal value in~\eqref{X:expr} is additionally required to be finite.
\end{definition}
Here, and in the sequel, $D_y^\eta \sfK$ denotes element-wise partial derivatives of the kernel $\sfK$, and $\|\cdot\|$ refers to the vector/matrix/tensor element-wise maximum norm; since the VIO concerns a tensor contraction over all indices in the lower-order tensor argument to $\Zcal$, constants in the estimates that we establish will generally depend on $p$ and $d$. The cases $\sigma=1$, $\sigma=2$ and $\sigma=3$ respectively correspond to the VIOs $\Vcal$, $\Wcal$ and $\Xcal$ introduced in Section~\ref{sec:prelim}, the latter involving a strongly singular integral.

The forthcoming convergence result (\Cref{thm:tri_error_analysis}) applies for evaluation at sets of points of $\OmegaB$ that lay in the interior of an element with respect to other
elements of the mesh, as made precise in the following definition:
\begin{definition}\label{def:wellseparatedeval}
  Consider a mesh family $\Tcal=\{\Tcal_h\}_{h>0}$, and let $\Scal_h=\bigcup_{K\in\Tcal_h}\p K$. A  family of evaluation point sets $\Ecal=\{\Ecal_h\}_{h>0}$, $\Ecal_h\subset\overline{\Omega}$ for all $h>0$, possibly intersecting the boundary $\G=\p\Omega$, is called \emph{well-separated} with respect to $\Tcal$ if
\begin{equation}\label{eq:wellseparated_dist}
    d_{\Tcal,\Ecal} \coloneqq \inf_{h>0}\,\,\inf_{\substack{y \in \Scal_h\setminus\G\\ x \in \Ecal_h}} \frac{|x - y|}{h} > 0.
\end{equation}
\end{definition}

A typical example is evaluation at the point set consisting of 3D-VR interpolation/quadrature nodes, which are interior to elements: the theorem thus in particular estimates the error in the VIO evaluated at all the 3D-VR nodes, i.e. for $\Ecal_h=\lcb \xi_j \rcb_{j=1}^N$ with $N = Lq_n$. Indeed, any set of interior points on $\KHat$ leads to a well-separated family of evaluation points over the triangulation, as the following proposition (stated for a straight mesh family but extending easily to a family $\Tcal$ with curved elements) shows. Another use-case of the definition would be the evaluation at point-sets lying on the boundary $\G=\partial \Omega$, e.g.\ for the solution of boundary value problems.\enlargethispage*{3ex}

\begin{proposition}[{\hspace{1sp}\cite[Prop.\ 6.3]{anderson2024fast}}]\label{rem:interior_VR_nodes}
    Let $\Tcal=\{\Tcal_h\}_{h>0}$ be a quasi-uniform and
    shape-regular family of straight meshes of $\Omega$. The family $\Ecal=\{\Ecal_h\}_{h>0}$ with $\Ecal_h$ being a set of interior
    quadrature nodes over a triangulation $\Tcal_h\subset\Tcal$ of
    $\Omega$, is a well-separated family of evaluation point sets with respect
    to $\Tcal$.
\end{proposition}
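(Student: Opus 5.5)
The plan is to reduce the statement to a single reference-element computation, using that affine maps distort distances by factors controlled by the shape-regularity and quasi-uniformity constants. Write $\Ecal_h=\bigcup_{K\in\Tcal_h} T_K(\widehat{\Ecal})$, where $\widehat{\Ecal}\subset\KHat$ is the fixed, finite set of interior reference nodes (the 3D-VR nodes) and $T_K=\widetilde F_K$ is affine. Since $\widehat{\Ecal}$ is finite and lies in the open reference tetrahedron, the quantity
\[
\hat\delta:=\min_{\hat x\in\widehat{\Ecal}}\operatorname{dist}(\hat x,\partial\KHat)
\]
is strictly positive and independent of $h$. This is the only place where interiority of the nodes is used.

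Fix $x=T_K(\hat x)\in\Ecal_h$ and let $y\in\Scal_h\setminus\G$. First, $\operatorname{dist}(x,\partial K)$ is bounded below. For $z\in\partial K$, write $z=T_K(\hat z)$ with $\hat z\in\partial\KHat$. Then
\[
|\hat x-\hat z|=|\widetilde B_K^{-1}(x-z)|\le\|\widetilde B_K^{-1}\|_2\,|x-z|.
\]
The standard estimate (Ciarlet) gives $\|\widetilde B_K^{-1}\|_2\le h_{\KHat}/\rho_K$. Hence
\[
|x-z|\ge \hat\delta\,\rho_K/h_{\KHat}\ge \hat\delta\, h_K/(\kappa_\Tcal h_{\KHat})\ge \hat\delta\,h/(\kappa_\Tcal q_\Tcal h_{\KHat}),
\]
using the definitions of $\kappa_\Tcal$ and $q_\Tcal$ in \Cref{def:regularuniform}.

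Second, I would show that any $y\in\Scal_h$ satisfies $|x-y|\ge\operatorname{dist}(x,\partial K)$. There are two cases.
\begin{itemize}
\item If $y\notin \operatorname{int}K$, the segment from $x$ to $y$ must cross $\partial K$, so $|x-y|\ge\operatorname{dist}(x,\partial K)$.
\item If $y\in\operatorname{int}K$, then $y$ cannot lie on the boundary of any other element, because elements of a conforming tetrahedralization have disjoint interiors. Thus $y\notin\Scal_h$, and this case does not occur.
\end{itemize}
Combining the two steps gives $|x-y|/h\ge \hat\delta/(\kappa_\Tcal q_\Tcal h_{\KHat})$, uniformly in $h$, $K$, $x$ and $y$. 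Therefore $d_{\Tcal,\Ecal}>0$. Excluding $\G$ from $\Scal_h$ is not needed here, but it is harmless.

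I expect the main obstacle to be tracking the conversion between $h_K$, $\rho_K$ and the global $h$, which is exactly where both shape-regularity and quasi-uniformity enter. For the curved extension mentioned before the proposition, I would replace the linear bound with the bi-Lipschitz bound on $T_K^{-1}$ from \eqref{eq:mesh_reg_Ck_def}. The constant $1/(1-c_1(\Tcal))$ then multiplies $\|\widetilde B_K^{-1}\|$, and the segment-crossing argument uses only the topology of $K$.
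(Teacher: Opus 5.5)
Your proof is correct: the fixed positive distance $\hat\delta$ of the interior reference nodes to $\partial\KHat$ is pushed forward using $\|\widetilde B_K^{-1}\|_2\le h_{\KHat}/\rho_K$ and converted to a multiple of $h$ through $\kappa_\Tcal$ and $q_\Tcal$. Combined with the observation that every $y\in\Scal_h$ lies outside $\operatorname{int}K$, this is exactly what is needed, and it even covers points of $\G$. The paper gives no proof of its own and quotes the result from the authors' earlier 2D work; your affine pull-back argument is the standard one behind that result, so your route is essentially the intended one.
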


The convergence analysis finally also relies on the existence and boundedness of Lagrange polynomials on mesh elements in the family:
\begin{assump}\label{assump:lagrange}
For a given mesh family $\Tcal=\{\Tcal_h\}_{h>0}$ of $\Omega$, the
  Lagrange interpolation polynomials $\lambda_i(y) = \lambda_i(y;
  K_\ell)$ ($1 \le i \le q_n$) exist on each element $K_\ell\in\Tcal_h\in\Tcal$ and, further, are uniformly bounded; that is,
  \begin{equation}\label{eq:lagrange_assumption_bound}
    \sup_{\Tcal_h \in \Tcal} \max_{K_\ell \in \Tcal_h} \max_{y \in H(K_\ell)} \max_{1 \le i \le q_n} |\lambda_i(y; K_\ell)| \le \Lambda(n,\Tcal),
  \end{equation}
  with $H(K_\ell)$ being  the convex hull of the vertices and the set of  interpolation nodes $ \Ical_n$.
\end{assump}
In the case of straight elements, this is a mild element-quality assumption since, per \cite[Thm.\
A.2]{anderson2024fast}, bounds on $\Lambda(n, \Tcal)$ can be expressed in terms of $n$
and the shape-regularity constant $\kappa_{\Tcal}$.

\subsection{Main convergence result}

We can now state the main convergence result:
\begin{theorem}\label{thm:tri_error_analysis}
  Let $n\in \N_0$ be the interpolation degree, let the (positive weight) quadrature have degree of exactness $m\in \N_0$, $m > n$, and let $\Gamma=\partial\Omega$ be piecewise-$C^{m+2}$. Let $\Tcal = \left\{\Tcal_h\right\}_{0<h\le h_0}$ denote a mesh family of the connected domain~$\Omega$  that is shape-regular and quasi-uniform, satisfies \Cref{assump:lagrange}, and has maximum element size $h_0$ chosen so that \Cref{cor:curvilinear} holds with $\theta \ge m + 1$. Let $\Ecal=\{\Ecal_h\}_{h>0}$ be a family of well-separated evaluation point sets for $\Tcal$, and denote by $f_n(\cdot; K)$ the Lagrange polynomial interpolant of $f\in C^{s}(\overline\Omega)$, $s = \max\{n+3,m+1\}$, with interpolation in some $K\in\Tcal_h$ enforced on the associated interpolation node-set $\Ical_n\subset K$. Finally, let $\Zcal$ denote a volume integral operator~\cref{VIO:Z} with kernel $\sfK$ that is admissible of class $\sigma$ (\Cref{def:kernel_bounds}). For all $x \in  K\cap\Ecal_h$ it holds that
\begin{multline}\label{eq:triangle_error_estimate_vr}
      \left\|\Zcal\left[f - f_n(\cdot; K)\right](x) - \ZcalB^{h,m}\left[f - f_n(\cdot; K)\right](x)\right\| \\
    \le C^{(1)}_{\Zcal} h^{n+4-\sigma} + C^{(2)}_{\Zcal}
        \begin{cases}
            h^{n+4-\sigma}, & m > n + 3 - \sigma \\[-0.5ex]
            h^{n+4-\sigma}\left|\log h\right|, &m = n + 3 - \sigma\\[-0.5ex]
            h^{m+1}, & m < n + 3 - \sigma,
        \end{cases}
\end{multline}
where $C^{(1)}_{\Zcal}$ and $C^{(2)}_{\Zcal}$ are positive constants independent of $h$ but dependent on $p$, $d(\sheq3)$, $\Omega$, $\kappa_\Tcal$, $q_\Tcal$, $f$, $m$, $n$, $d_{\Tcal, \Ecal}$ and $\Lambda(n, \Tcal)$; the constants depend linearly on $\Lambda(n, \Tcal)$.
\end{theorem}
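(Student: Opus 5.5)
The plan is to split the error into a \emph{local} part and a \emph{quadrature} part. With $e_n:=f-f_n(\cdot;K)$ and $x\in K\cap\Ecal_h$, write
\[
\Zcal[e_n](x)-\ZcalB^{h,m}[e_n](x)=\int_K \sfK(x,y)e_n(y)\dy+\sum_{K'\neq K}\Bigl(\int_{K'}-\,\Qcal^{h,m}_{K'}\Bigr)\bigl[\sfK(x,\cdot)e_n\bigr].
\]
For $\sigma=3$ the first integral is understood as a principal value, which is finite by \Cref{def:kernel_bounds}. The local term will give $C^{(1)}_{\Zcal}h^{n+4-\sigma}$, and the sum over $K'\neq K$ will give the case-dependent $C^{(2)}_{\Zcal}$ term.

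The main ingredient is a \emph{global} bound on the interpolation error. Since $f_n(\cdot;K)$ is a polynomial, it is defined on all of $\R^3$, not only on $K$. Using a Taylor expansion of $f$ about a point of $K$ together with \Cref{assump:lagrange}, I expect
\[
|D^\beta e_n(y)|\lesssim \Lambda(n,\Tcal)\,(h+|y-x|)^{n+1-|\beta|}
\]
for $|\beta|\le n+1$. For larger $|\beta|$ the polynomial part vanishes, so the bound is just $\|f\|_{C^s}$. To control the Lagrange polynomials at distance $r$, I would use the scaling $|\lambda_i(y)|\lesssim\Lambda(1+r/h)^n$. Multiplying this by the Taylor remainder at the nodes, which is $O(h^{n+1})$, gives $h^{n+1}(1+r/h)^n\lesssim (h+r)^{n+1}$, as claimed. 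This bound is what makes the regularization global: the interpolant is accurate near the singularity at $x$ exactly in proportion to how close one is to it.

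For the local term I use $|\sfK(x,y)|\lesssim|x-y|^{-\sigma}$ and $|e_n|\lesssim h^{n+1}$ on $K$. When $\sigma<3$ this gives $\int_K |x-y|^{-\sigma}\dy\lesssim h^{3-\sigma}$, hence a contribution $h^{n+4-\sigma}$. When $\sigma=3$ I first write $e_n(y)=e_n(x)+(e_n(y)-e_n(x))$. The constant part is handled by the principal value plus a bounded integral over $K$ minus a ball, giving $h^{n+1}$. The remainder is $\lesssim h^n|y-x|$, which is integrable and also gives $h^{n+1}=h^{n+4-\sigma}$.

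For the far elements, on each $K'\neq K$ I pull back to $\KHat$ and apply the standard remainder estimate for a rule of exactness $m$:
\[
\Bigl|\int_{K'}\phi-\Qcal_{K'}\phi\Bigr|\lesssim h^3\sup_{\KHat}\bigl|D^{m+1}_{\yHat}\bigl[\phi\circ T_{K'}\,|\det\mathsf J_{K'}|\bigr]\bigr|.
\]
To bound the right-hand side I use the Fa\`a di Bruno formula together with \eqref{eq:Dalpha_TK} and \eqref{eq:Jac_TK} (this needs $\theta\ge m+1$). The result is that the reference derivative is bounded by $h^{m+1}\max_{j\le m+1}\sup_{K'}|D^j_y\phi|/h^{j}\cdot h^{j}$, which is effectively $h^{m+1}\sup_{K'}|D^{m+1}\phi|$ up to lower-order terms; lower-order Jacobian derivatives contribute at the same scaling. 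Leibniz's rule with the kernel bound \eqref{eq:greens_admissible} and the global $e_n$ bound then give, for $r=\operatorname{dist}(x,K')\gtrsim h$ (by well-separation, also for neighbouring elements),
\[
|D^{m+1}\phi|\lesssim \sum_{j} r^{-\sigma-j}\,(h+r)^{\max(n+1-(m+1-j),0)} .
\]
Summing over the roughly $r^2 h^{-1}\,dr/h^3$ elements in each shell turns this into $h^{m+1}\int_h^{R} r^{\,n+1-m-1-\sigma}r^2\,dr/h^{3}\cdot h^3$. The integrand behaves like $r^{n+2-\sigma-m}$, so the shell sum $\int_h^R r^{n+2-\sigma-m}dr$ produces exactly the three cases: $h^{n+3-\sigma-m}$ when $m>n+3-\sigma$, $|\log h|$ at equality, and $O(1)$ otherwise. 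Multiplying by $h^{m+1}$ gives the stated rates.

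The main obstacle is this last step. The bookkeeping of mixed derivative terms in which the polynomial part of $e_n$ is exhausted must be done carefully (those terms are bounded by $\|f\|_{C^s}$, hence the requirement $s=\max\{n+3,m+1\}$). In addition, uniformity near curved boundary elements has to be verified, using \Cref{thm:mapping_properties}.
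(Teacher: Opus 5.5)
Your proposal is correct and follows essentially the same route as the paper. On the singular element $K$ you use a direct interpolation-error bound, with the constant-plus-Lipschitz splitting of $f-f_n$ when $\sigma=3$. On the other elements you control the pulled-back reference-element quadrature remainder via Fa\`a di Bruno, Leibniz, and distance-graded bounds on $D^\beta(f-f_n)$ from the Taylor/Lagrange splitting, then sum over shells of width $h$ containing $O(j^2)$ elements to get the three regimes.

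The differences are minor: you fold the neighbouring elements $\Ncal_h\setminus K$ into the derivative-based estimate (legitimate by well-separatedness) instead of using the paper's positive-weight bound, and you derive the Lagrange part of the global bound from polynomial growth of the $\lambda_i$ off $H(K)$ rather than citing the multipoint Taylor lemma. The extra $h^3$ in your displayed remainder is a typo, since that factor is already carried by $|\det\mathsf J_{K'}|$, and your later count correctly does not use it.
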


\begin{remark}
    The applicability of \Cref{thm:tri_error_analysis} to families of curved tetrahedra is a strengthening of the analogous 2D result in \cite{anderson2024fast} which was limited to mesh families consisting of straight triangles. But in view of the curved mapping used in~\cite{anderson2024fast} also satisfying \Cref{cor:curvilinear}, per the analysis in~\cite{zlamal1973curved}, the same proof technique used for \Cref{thm:tri_error_analysis} extends~\cite[Thm.\ 6.6]{anderson2024fast} to mesh families with curved triangles.
\end{remark}

\begin{proof}
Fix an element $K\in\Tcal_h$, and let $x \in K\cap\Ecal_h$ be a fixed evaluation point.
Let $B_r(x):=\lcb y\in\R^3:\, |x-y|\leq r \rcb$ be the closed ball centered at $x$ with radius $r$. We let $\Ncal_h$ be the neighborhood of $x$ defined by
\begin{equation}
  \Ncal_h = \bigcup\ \lcb K_{\ell}\in\Tcal_h:\, K_{\ell}\cap B_h(x)\not=\emptyset \rcb, \label{Nh:def}
\end{equation}
noting that $\Ncal_h\subset B_{2h}(x)$.
The proof is then divided into in two main parts. The first part proceeds by showing
that the near-singular volume integrals over each of
  the elements within $\Ncal_h$ are each
  $\Ocal(h^{n+4-\sigma})$ quantities, in particular showing that neglecting
  the integral over $\Ncal_h \setminus K$  approximates the integral over $\Ncal_h$
  with errors that behave as $\Ocal(h^{n+4-\sigma})$ as $h
  \to 0$. The second part considers the error from numerical quadrature
  on the elements in the outer region $\Omega\setminus\Ncal_h $.

\proofstep{Part 1: contributions from $\Ncal_h$.}
In what follows the Lagrange interpolation error estimate~\cite[Thm.\ 2]{Ciarlet:72} will prove useful; it provides
\begin{equation}\label{f_fn_interp_near_estimate}
  \left\|f(y) - f_n(y; K)\right\| \le {C}_0 h^{n+1}\quad\mbox{for all}\quad y \in \Ncal_h(x),
\end{equation}
where ${C}_0 = C_0(f)$ denotes a constant independent of $h$.  Note that these estimates hold not merely on $K$, with $\Ical_n \subset K$, but as well over an $\Ocal(h)$ neighborhood.

The first step of the proof provides bounds on components of the regularized volume potential $\Zcal[f - f_n(\cdot; K)](x)$, for whose integrand the shorthand notation
\begin{equation}\label{eq:phi1_def}
    \phi(x,y) \coloneqq
    \sfK(x,y) [f(y) - f_n(y; K)], \quad\quad x, y \in \widebar{\Omega},
\end{equation}
is used thereafter. For the weakly singular VIOs $\Zcal=\Vcal,\Wcal$, for which $\sigma=1,2$, considering each of the integrals over the elements of $\Ncal_h$, we have from a change to spherical coordinates centered at  $x\in K\cap\Ecal_h\subset\Ncal_h$ and the bound~\cref{f_fn_interp_near_estimate}, the estimate
\begin{equation}\label{eq:int_near_sing}
  \bigg\|\, \int_{\Ncal_h} \phi(x,y) \de y \,\bigg\|
 \leq C_{p,d}\int_{\Ncal_h} \left\| \phi(x,y) \right\| \de y \le {C}_1 h^{n+4-\sigma},
\end{equation}
which in view of the fact that $K\subset\Ncal_h(x)$ implies that
\begin{equation}\label{Vk_analytical_regularization_error}
  \lnrm \Zcal\left[f - f_n(\cdot; K)\right](x) - \ZcalB\left[f - f_n(\cdot; K)\right](x) \rnrm \le {C}_1 h^{n+4-\sigma}.
\end{equation}
Similarly, since $\Ncal_h\setminus K\subset\Ncal_h$ we get from the last inequality of~\cref{eq:int_near_sing} the bound
\begin{equation}\label{Gk_near_estimate_analytical}
  \bigg\|\, \int_{\Ncal_h\setminus K} \phi(x,y) \de y \,\bigg\| \le {C}_1 h^{n+4-\sigma}.
\end{equation}

Turning to the singular potential $\Zcal=\Xcal$, the contribution to $\Xcal[f - f_n(\cdot; K)]$ from $\Ncal_h$ is given by~\eqref{X:expr} with $g=f - f_n(\cdot; K)$ and $\Ncal_h$ replacing $\OO$, and in particular includes the (local at $x\shin K$) term $S\cdot g(x)$. From writing $g(y)=[g(y)-g(x)]+g(x)$, applying Green's identity to the volume integral over $\Ncal_h\setminus B_{\eps}(x)$ with density $g(x)$, cancelling the resulting integral on the sphere $\partial B_{\eps}(x)$ with that defining $S$ in~\eqref{S:def}, and finally taking the limit $\eps\to0$ that yields the principal value in~\eqref{X:expr},  we obtain
\begin{multline}\label{aux01}
  \Xcal[\lpar f - f_n(\cdot; K) \rpar\mathds{1}_{\Ncal_h}] =
  - \int_{\Ncal_h} \phi^{\Xcal}(x,y) \de y \\ - \int_{\partial\Ncal_h} \nu(y)\,\lpar \nabla_x\GG(x,y) \cdot[f - f_n(\cdot; K)](x) \rpar \ds(y),
\end{multline}
with $\phi^{\Xcal}(x,y):=\nabla_x\nabla_y\GG(x,y)\cdot\lpar [f(y) - f_n(y; K)]-[f(x) - f_n(x; K)]\rpar$. Using the estimate~\eqref{f_fn_interp_near_estimate} in the zeroth-order Taylor expansion of $f(y) - f_n(y; K)$ about $y=x$ with integral remainder, we find
\[
  \left\| \lpar f(y) - f_n(y; K) \rpar  -\lpar f(x) - f_n(x; K) \rpar \right\| \leq C_2|y-x| h^n,
\]
so that the integral over $\Ncal_h$ in~\eqref{aux01} satisfies the bound~\eqref{eq:int_near_sing}, and that over $\Ncal_h\setminus K$ the bound~\eqref{Gk_near_estimate_analytical}. Then, e.g. using again spherical coordinates, we have
\[
   \left\| \int_{\partial\Ncal_h} \nu(y)\,\lpar \nabla_x\GG(x,y) \cdot[f - f_n(\cdot; K)](x) \rpar \ds(y) \right\| \leq C_3h^0
\]
(and similarly with the above integral over $\partial K$) which, combined with~\eqref{f_fn_interp_near_estimate} for $y=x$, produces bounds of the form $Ch^{n+1}$ for either boundary integral. Summarizing, since $\sigma=3$ in this case, we again obtain bounds of the form~\eqref{eq:int_near_sing} or~\eqref{Gk_near_estimate_analytical}
\begin{equation}
\begin{aligned}
  \lnrm\Xcal\lsqb \lpar f - f_n(\cdot; K)\rpar \mathds{1}_{\Ncal_h}(\cdot)\rsqb(x)\rnrm &\le C_4 h^{n+4-\sigma},\\
  \lnrm\Xcal\left[f - f_n(\cdot; K)\right](x) - \widebreve{\Xcal}\left[f - f_n(\cdot; K)\right](x)\rnrm &\le C_4 h^{n+4-\sigma}.
\end{aligned}\label{Xbreve_regularized}
\end{equation}

Turning to numerical quadratures and using the rule~\eqref{eq:standard_QR}, for the elements comprising $\Ncal_h\setminus K$, using~\cref{Gk_near_estimate_analytical} and~\cref{f_fn_interp_near_estimate} in conjunction with the triangle inequality, we find, contracting $\sfK$ on $\phi$ and picking up a factor $C_{p,d}$,
\begin{align}\label{near_quadrature_estimate}
\MoveEqLeft[6]
    \bigg\| \,\int_{\Ncal_h \setminus K} \phi(x,y) \de y
    - \sum_{j\in\rmQ(\Ncal_h \setminus K)} \upomega_{j} \phi(x,\xi_j) \,\bigg\| \\
    & \le ({C}_1+C_4)h^{n+4-\sigma} + {C}_0 C_{p,d} h^{n+1} \max_{j\in\rmQ(\Ncal_h \setminus K)} \left\|\sfK(x, \xi_j)\right\| \sum_{j\in\rmQ(\Ncal_h \setminus K)} \upomega_{j} \notag \\
    & \le C_5 h^{n+4-\sigma}, \notag
\end{align}
where the inequalities above follow because, firstly, the quadrature rule has positive weights that satisfy $\sum_{j\in\rmQ(\Ncal_h \setminus K)}\upomega_{j} \lesssim h^3$ and secondly, since $x \in \Ecal_h$ lies at a distance from $\Ncal_h\setminus K$ that scales linearly with $h$, from~\cref{eq:greens_admissible} we have $\sup_{j\in\rmQ(\Ncal_h \setminus K)} \left\|\sfK(x, \vv\xi_j)\right\| \lesssim h^{-\sigma}$, with an implied constant dependent on $d_{\Tcal,{\Ecal}}$ (see \Cref{def:wellseparatedeval}) and $C_{\sfK, 0, \operatorname{diam} \Omega}$. We emphasize that all constants in Part 1 above depend on $p$ and $d(\sheq3)$ due to tensor contraction and the non-sub-multiplicativity of $\|\cdot\|$.

\proofstep{Part 2: contributions from $\Omega\setminus\Ncal_h$.}
This part relies on a result concerning the accuracy of ordinary quadratures over  $\Omega \setminus \Ncal_h$, given thereafter in \Cref{lem:ordquad_convergence_farfield_optimal}. That result in turn requires sharp estimates on the derivatives of $\phi(x,y)$, given in the following lemma whose proof is based on error estimates for multivariate interpolation. In what follows, in Lemmas~\ref{taylor_lagrange_interpolation_lemma}\textendash\ref{lem:ordquad_convergence_farfield_optimal}, we let $f, f_n$, and $K$ denote, without loss of generality, arbitrary scalar components of the respective tensor quantities; in the proof completion, the component-wise bounds combine to yield the final proof of \Cref{thm:tri_error_analysis} for the full tensor contraction. In the sequel we define $S_n(t):=\sum_{\gamma:|\gamma|\leq n} t^{n+1-|\gamma|}$.

\begin{lemma}\label{taylor_lagrange_interpolation_lemma}
Take as given the assumptions and setting of \Cref{thm:tri_error_analysis}. In particular, $n\in\N_0$ is the interpolation degree and $m\in\N$. Letting  $\alpha\in\N_0^3$ be a multi-index verifying $|\alpha| \le m+1$ and for $\phi$ given in~\eqref{eq:phi1_def}, the following estimate holds for
any $\delta>0$ and any $x \in K$:
\begin{equation}\label{eq:interpolation_estimate}
    \left|D_{y}^{\alpha} \phi(x,y)\right| \le
    \delta^{-\sigma + n+1-|\alpha|}\Lpar C\subT+  C\subL \Lambda S_n(h/\delta)\Rpar,\quad y \in\Omega\setminus B_\delta(x),
\end{equation}
where the positive constants $C\subT$ and $C\subL$ satisfy $C\subT = C\subT(|\alpha|,n, f,\Omega, C_{\sfK,|\alpha|,\operatorname{diam}\Omega}) $ and $C\subL = C\subL(|\alpha|,n, \kappa_\Tcal, q_\Tcal, f, \Omega, C_{\sfK,|\alpha|,\operatorname{diam}\Omega})$, but are both independent of $\Lambda$, $x$, $\delta$, $K$, $\Tcal$ and $h$.
\end{lemma}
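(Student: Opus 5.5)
The plan is to apply the Leibniz rule to $\phi(x,y)=\sfK(x,y)[f(y)-f_n(y;K)]$, writing
\[
D_y^\alpha\phi(x,y)=\sum_{\beta\le\alpha}\binom{\alpha}{\beta}D_y^{\alpha-\beta}\sfK(x,y)\,D_y^{\beta}\bigl(f-f_n(\cdot;K)\bigr)(y),
\]
and to bound each factor separately. By \Cref{def:kernel_bounds} with $R=\operatorname{diam}\Omega$, the kernel factor satisfies $|D_y^{\alpha-\beta}\sfK(x,y)|\le C_{\sfK,|\alpha|,\operatorname{diam}\Omega}\,|x-y|^{-\sigma-|\alpha|+|\beta|}$. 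The whole task is therefore a pointwise bound on $D^\beta(f-f_n)(y)$ for $y$ possibly far from $K$ (distance up to $\operatorname{diam}\Omega$), expressed in powers of $|y-x|$ and $h$. The target is
\[
|D^\beta(f-f_n)(y)|\lesssim |y-x|^{n+1-|\beta|}+\Lambda\sum_{|\gamma|\le n}h^{n+1-|\gamma|}|y-x|^{|\gamma|-|\beta|}.
\]

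To get this bound, I split $f-f_n=(f-t_n)+(t_n-f_n)$, where $t_n$ is the degree-$n$ Taylor polynomial of $f$ about $x$. This uses $f\in C^{n+3}\supset C^{n+1}$, together with the fact that $|\alpha|\le m+1\le s$ so that $D^\beta f$ exists. I handle the two pieces as follows.

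\emph{Taylor remainder.} For $|\beta|\le n+1$, the integral-remainder form gives $|D^\beta(f-t_n)(y)|\le C|y-x|^{n+1-|\beta|}$. For $|\beta|>n+1$, $D^\beta t_n=0$ and $|D^\beta f|\le\|f\|_{C^s}\lesssim |y-x|^{n+1-|\beta|}$, since $|y-x|\le\operatorname{diam}\Omega$. Note that $\Omega$ connected but nonconvex does not matter here if the Taylor polynomial is taken of a $C^s$ extension of $f$ to $\R^3$. Combined with the kernel factor, this piece contributes $C\subT\,|y-x|^{-\sigma+n+1-|\alpha|}$.

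\emph{Interpolation piece.} The polynomial $t_n-f_n$ lies in $\Pcal_n$ and, since $f_n$ interpolates $f$ at the nodes, it interpolates $t_n-f$ on $\Ical_n$. Hence $t_n-f_n=\sum_i (t_n-f)(x_i)\lambda_i$. Each coefficient satisfies $|(t_n-f)(x_i)|\le Ch^{n+1}$ because $|x_i-x|\le h$. I then expand each $\lambda_i$ in monomials centered at $x$, $\lambda_i(y)=\sum_{|\gamma|\le n} a_{i\gamma}(y-x)^\gamma$. Bounding $|a_{i\gamma}|\lesssim \Lambda h^{-|\gamma|}$ is the main obstacle. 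I would obtain it by a Markov-type inequality: $\lambda_i$ is bounded by $\Lambda$ on $H(K)$ (\Cref{assump:lagrange}), and $H(K)$ contains a ball of radius $\gtrsim h$ by shape regularity and quasi-uniformity. Equivalence of norms on $\Pcal_n$ over the unit ball, after scaling, then gives $|D^\gamma\lambda_i(x)|\le C(n,\kappa_\Tcal,q_\Tcal)\Lambda h^{-|\gamma|}$. Since $x\in K\subset H(K)$ but the ball may not be centered at $x$, one also needs a polynomial-extension bound on a ball of radius $\sim h$ containing $H(K)$, which holds with constants depending only on $n$ and the shape constants. For curved $K$ with $\theta\ge m+1$, \Cref{cor:curvilinear} keeps $K$ within $O(h)$ of its straight counterpart, so the same scaling applies.

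Differentiating the expansion then gives
\[
|D^\beta(t_n-f_n)(y)|\le C\Lambda h^{n+1}\sum_{\gamma\ge\beta,\,|\gamma|\le n}h^{-|\gamma|}|y-x|^{|\gamma|-|\beta|},
\]
which vanishes for $|\beta|>n$. Multiplying by the kernel factor and setting $|y-x|\ge\delta$, each term becomes
\[
\Lambda\,\delta^{-\sigma-|\alpha|+n+1}\,(h/\delta)^{n+1-|\gamma|}\,(|y-x|/\delta)^{|\gamma|-\sigma-|\alpha|}.
\]
Here I note $|\gamma|-\sigma-|\alpha|+|\beta|-|\beta|\le n-\sigma-|\alpha|+\dots$. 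The exponent of $|y-x|$ in the product is $|\gamma|-\sigma-|\alpha|$; if it is negative, $|y-x|\ge\delta$ bounds it. If it is positive, I bound $|y-x|^{\rm pos}$ by $\operatorname{diam}\Omega$ to that power, absorbing $\delta$-mismatches into $C\subL$. This step deserves care, possibly via restriction to $\delta\lesssim\operatorname{diam}\Omega$, because the stated form $\delta^{-\sigma+n+1-|\alpha|}S_n(h/\delta)$ requires the nonnegative leftover powers to be absorbed. Summing over $\beta\le\alpha$ and $\gamma$ yields \eqref{eq:interpolation_estimate}, with $C\subT$ independent of $\Lambda$ and $C\subL$ carrying the shape dependence.
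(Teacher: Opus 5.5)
Up to the final step your argument is the paper's own. You apply the Leibniz rule, split $f-f_n=(f-t_n)+(t_n-f_n)$ with $t_n$ the Taylor polynomial at $x$, and obtain the pointwise bound
\[
|D_y^\alpha\phi(x,y)|\le C\subT\,|y-x|^{-\sigma+n+1-|\alpha|}+C\subL\Lambda\sum_{|\gamma|\le n}h^{n+1-|\gamma|}|y-x|^{-\sigma+|\gamma|-|\alpha|}.
\]
The one difference is the Taylor-minus-interpolant bound. You derive it by writing $t_n-f_n=\sum_i(t_n-f)(x_i)\lambda_i$ and bounding $D^\gamma\lambda_i(x)$ by a scaled norm-equivalence (Markov) argument on a ball of radius $\sim h$ inside $H(K)$. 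The paper instead imports this bound from Lemma~6.8 of~\cite{anderson2024fast}. Your derivation is sound and self-contained. Taking Taylor polynomials of a $C^s$ extension of $f$ also avoids the paper's convexity caveat.

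The gap is the last step: converting $|y-x|^{e}$ into $\delta^{e}$ when the exponent $e$ (namely $-\sigma+n+1-|\alpha|$ or $-\sigma+|\gamma|-|\alpha|$) is nonnegative. Your own rewriting shows each term carries the factor $(|y-x|/\delta)^{e}$. Bounding $|y-x|^e$ by $(\operatorname{diam}\Omega)^e$ leaves $(\operatorname{diam}\Omega/\delta)^e$, which cannot be absorbed into a $\delta$-independent constant. Restricting to $\delta\lesssim\operatorname{diam}\Omega$ goes the wrong way, since the trouble is $\delta\ll|y-x|$.

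This cannot be repaired, because the inequality as stated is false when such an exponent is positive. Take the Laplace Newton kernel ($\sigma=1$), $\alpha=0$, $n\ge1$, a fixed $x\in K$ for each $h$, and $\delta=h$. The right-hand side is then $h^{n}(C\subT+C\subL\Lambda q_n)\to0$, uniformly in $y$. At a fixed $y$ at distance $O(1)$ from $x$, your bound on $t_n-f_n$ shows the left-hand side tends to $|f(y)-t_n(y)|/(4\pi|x-y|)$. This limit is nonzero for generic $f$, e.g.\ $f(y)=e^{y_1}$.

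The paper's proof makes the same move: it says ``holds since $|x-y|\le\operatorname{diam}(\Omega)$'' for $R^{(2)}$, and argues analogously for the Taylor part. So the defect lies in the statement, not in your route.

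What is true, and what your two pieces actually prove, is the bound with $\delta$ replaced by $|y-x|$:
\[
|D_y^\alpha\phi(x,y)|\le|y-x|^{-\sigma+n+1-|\alpha|}\bigl(C\subT+C\subL\Lambda S_n(h/|y-x|)\bigr).
\]
On shells $\delta\le|y-x|\le c\delta$ this gives the stated $\delta$-form, with constants depending on $c$. That is all \Cref{lem:ordquad_convergence_farfield_optimal} uses. There $\delta=jh$ on $A_j$, where $|y-x|<(j+2)h\le3\delta$; and $\delta=\delta_0$ is fixed on $A_{J+1}$, where $|y-x|\le\operatorname{diam}\Omega$. You should state and prove this corrected form rather than try to close the absorption step.
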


We will develop high-order composite quadrature estimates for collections of curvilinear tetrahedral elements by leveraging the regularization result \Cref{taylor_lagrange_interpolation_lemma} and extending it to hold for the pull-back to the reference element $\KHat$. Thus, bounds on the derivatives of the composition of $\phi$ with the curvilinear mapping $T_\ell$ are obtained via a multivariate version of Fa\`a di Bruno's formula. A 2D analogue of this lemma extends the high-order estimates in~\cite{anderson2024fast} to collections of curvilinear elements, which were not previously treated.
\begin{lemma}\label{taylor_lagrange_interpolation_lemma_reference}
  Take as given the assumptions and setting of \Cref{thm:tri_error_analysis}, so that in particular $\Tcal_h \in (\Tcal)_{0<h\le h_0}$, $\Tcal_h$ a mesh with $L$ elements and $h_0$ a value such that \Cref{cor:curvilinear} holds with $\theta \ge m + 1$ for all $0 < h \le h_0$. Let $\delta>0$ and $x \in K \cap \mathcal{E}_h$ be given for a certain $K \in \Tcal_h$, and let $\ell \in \N$, $1 \le \ell \le L$, be such that the element $K_{\ell}\in\Tcal_h$ satisfies $K_{\ell}\subset \OO\shsetm \overline{B_{\delta}(x)}$.  Then for all multi-indices $\beta$, $|\beta| \le m+1$, it holds that for all $\widehat y \in \KHat$,
\begin{equation}\label{eq:interpolation_estimate_reference}
  \left| D_{\widehat{y}}^\beta \phi(x, T_\ell(\widehat{y}))\right|
  \le h^{|\beta|}\sum_{\lambda: 1 \le |\lambda| \le |\beta|} \delta^{-\sigma + n + 1 - |\lambda|}\Big( \widehat C\subT + \widehat C\subL \Lambda S_n(h/\delta) \Big),
\end{equation}
  with positive constants $\widehat C\subT$ and $\widehat C\subL$ satisfying $\widehat C\subT = \widehat C\subT(n, |\beta|, c_{|\beta|}(\Tcal), f,\Omega, C_{\sfK,|\beta|,\operatorname{diam}\Omega}) $ and $\widehat C\subL = \widehat C\subL(n, |\beta|, \kappa_\Tcal, q_\Tcal, c_{|\beta|}(\Tcal), f, \Omega, C_{\sfK, |\beta|,\operatorname{diam}\Omega})$, but both independent of $\Lambda$, $x$, $\delta$, $K$, $\Tcal$ and $h$.
\end{lemma}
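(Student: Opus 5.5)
The plan is to bound the derivatives of the composite map $\widehat y\mapsto \phi(x,T_\ell(\widehat y))$ by the multivariate Fa\`a di Bruno formula. The two inputs are the physical-space bounds of \Cref{taylor_lagrange_interpolation_lemma} and the mapping bounds~\cref{eq:Dalpha_TK} of \Cref{cor:curvilinear}. Fix $x\in K\cap\Ecal_h$ and $\ell$ with $K_\ell\subset\Omega\setminus\overline{B_\delta(x)}$. Write $\psi(\widehat y):=\phi(x,T_\ell(\widehat y))$, with $\phi(x,\cdot)$ viewed as a function of $y\in\R^3$.

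\emph{Step 1 (Fa\`a di Bruno).} For $1\le|\beta|\le m+1$ we have
\[
D^\beta_{\widehat y}\psi(\widehat y)=\sum_{\lambda:\,1\le|\lambda|\le|\beta|}\big(D_y^\lambda\phi\big)(x,T_\ell(\widehat y))\,\sum_{\text{partitions}} c_{\beta,\lambda,\dots}\prod_{j=1}^{|\lambda|}D^{\gamma_j}_{\widehat y}[T_\ell]_{i_j}(\widehat y).
\]
Here the inner sum runs over decompositions $\beta=\gamma_1+\dots+\gamma_{|\lambda|}$ with each $|\gamma_j|\ge1$, assigned to the components $i_j$ prescribed by $\lambda$. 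The combinatorial coefficients depend only on $|\beta|$, and the number of terms is bounded by a constant depending on $|\beta|$.

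\emph{Step 2 (mapping factors).} Since $\theta\ge m+1\ge|\beta|$ and $|\gamma_j|\le|\beta|\le\theta+1$, \cref{eq:Dalpha_TK} gives $|D^{\gamma_j}[T_\ell]_{i_j}|\le C_1h^{|\gamma_j|}$ uniformly on $\KHat$. Because the $\gamma_j$ sum to $\beta$, each product is bounded by $C_1^{|\lambda|}h^{\sum_j|\gamma_j|}=C_1^{|\lambda|}h^{|\beta|}$. This is the key point: the power of $h$ equals $|\beta|$ no matter how many derivatives fall on $\phi$. It is exactly where we need the curved elements to be regular of order $\theta$, rather than merely having $\|DT_\ell\|\lesssim h$. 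The constant depends on $c_{|\beta|}(\Tcal)$, $\kappa_\Tcal$, $\Gamma$ and $|\beta|$.

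\emph{Step 3 (kernel--interpolant factors).} Since $T_\ell(\widehat y)\in K_\ell\subset\Omega\setminus B_\delta(x)$ and $|\lambda|\le|\beta|\le m+1$, \Cref{taylor_lagrange_interpolation_lemma} applies with $\alpha=\lambda$. It gives $|D^\lambda_y\phi(x,T_\ell(\widehat y))|\le\delta^{-\sigma+n+1-|\lambda|}(C\subT+C\subL\Lambda S_n(h/\delta))$, where the constants are those associated with $|\lambda|$. Taking the maximum over $|\lambda|\le|\beta|$ of these constants (note $C_{\sfK,|\lambda|,\operatorname{diam}\Omega}$ is bounded by the maximum over orders up to $|\beta|$), and multiplying by the combinatorial factors and $C_1^{|\lambda|}$ from Step 2, yields constants $\widehat C\subT,\widehat C\subL$ with the stated dependencies. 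In particular they are independent of $\Lambda,x,\delta,K,\Tcal,h$. Summing over $\lambda$ gives~\cref{eq:interpolation_estimate_reference}. (For $\beta=0$ the bound is read with $\lambda=0$, or the statement is only needed for $|\beta|\ge1$; the case $|\beta|=0$ follows directly from \Cref{taylor_lagrange_interpolation_lemma}.)

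The main obstacle is bookkeeping rather than analysis. We must make sure Fa\`a di Bruno is applied component-wise, since $\phi$ takes a vector argument $y=T_\ell(\widehat y)$ and $\lambda$ ranges over multi-indices in $y$. We must also make sure $\phi(x,\cdot)$ is smooth enough on a neighborhood of $K_\ell$: $f\in C^{m+1}$, $f_n$ is polynomial, and $\sfK(x,\cdot)$ is smooth away from $x$. Finally, we must track that the constant from \cref{eq:Dalpha_TK} depends on $c_{|\gamma_j|}(\Tcal)$ for all $|\gamma_j|\le|\beta|$, which we absorb into the dependence on $c_{|\beta|}(\Tcal)$ as stated.
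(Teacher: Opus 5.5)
Your proposal is correct and follows essentially the same route as the paper. Both arguments use the multivariate Fa\`a di Bruno formula, the bound $|D^{\gamma}[T_\ell]_i|\lesssim h^{|\gamma|}$ from \Cref{cor:curvilinear} together with $\sum_j\gamma_j=\beta$ to extract the factor $h^{|\beta|}$, and \Cref{taylor_lagrange_interpolation_lemma} with $\alpha=\lambda$ for the physical-space derivatives of $\phi$. Your remark that $\beta=0$ must be handled separately, since the sum over $1\le|\lambda|\le|\beta|$ is then empty, is a valid point the paper leaves implicit; it is harmless because the lemma is only invoked with $|\beta|=m+1$.
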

\begin{proof}
  Calling $y = T_\ell(\widehat y)$, the multivariate Fa\`a di Bruno formula~\cite{constantine1996multivariate} provides
  \begin{equation}\label{eq:faa_di_bruno}
    D_{\widehat y}^\nu \phi(x, T_\ell(\widehat y)) = \sum_{\lambda: 1 \le |\lambda| \le |\nu|} D_y^\lambda \phi(x, y)\sum_{s=1}^{|\nu|} \sum_{p_s(\nu, \lambda)} (\nu!) \prod_{j=1}^s \frac{[D^{l_j}_{\widehat y} T_\ell]^{k_j}}{(k_j!)[l_j!]^{|k_j|}}
  \end{equation}
where
  \begin{equation}\label{eq:psdef}
  \begin{split}
    p_s(\nu, \lambda) = \{(k_1, \ldots, k_s; l_1, \ldots, l_s):\,& |k_i| > 0, \,0 \prec l_1 \prec \cdots \prec l_s,\\
    &\textstyle\sum_{i=1}^s k_i = \lambda, \,\mbox{and}\,\textstyle\sum_{i=1}^s |k_i|l_i = \nu\}.
  \end{split}
  \end{equation}
  Here, $l_i\prec l_j$ reflects a linear ordering on multiindices $l_i$ and $l_j$ (see~\cite{constantine1996multivariate} for details, unused here).

  Since $\Gamma$ is piecewise-$C^{m+2}$ it follows from
  \Cref{cor:curvilinear} that $\left|D^\alpha_{\widehat{x}}
  [T_\ell(\widehat{x})]_i\right| \le c_3 h^{|\alpha|}$ for $|\alpha| \le m+2$ and
  $1 \le i \le 3$. This component-wise bound, together with the
  property $\sum_{i=1}^s |k_i| l_i = \nu$ of elements in $p_s(\nu, \lambda)$ expressed in~\cref{eq:psdef} imply that $|[D^{l_j}_{\widehat
  y} T_\ell]^{k_j}| \le C h^{|\nu|}$, and substituting this bound
  into~\cref{eq:faa_di_bruno} and using~\cref{eq:interpolation_estimate} from
  \Cref{taylor_lagrange_interpolation_lemma}, which is valid since $y \in
  K_\ell \subset \Omega \setminus B_\delta(x)$, the result follows.$\qed$
\end{proof}

The following lemma, whose proof is given in Section~\ref{proof:ordquad_convergence_farfield_optimal},
establishes optimal convergence rates for sufficiently-powerful quadratures ($m$ sufficiently large). Here, optimality refers to matching the error introduced by
approximating $f$ by $f_n$ on $K$: $\left|\Zcal[f-f_n](x) - \ZcalB[f-f_n](x)\right| \lesssim h^{n+4-\sigma}$ per~\cref{Vk_analytical_regularization_error} (or~\cref{Xbreve_regularized}). It does so by developing bounds
for the  quadrature error over $\Omega\setminus\Ncal_h$ with a sequence of
shells (see~\cite[\S 6.1]{anderson2024fast} for a discussion of the necessity of ring/shell-like arguments to obtain
optimal rates).\enlargethispage*{5ex}

\begin{lemma}\label{lem:ordquad_convergence_farfield_optimal}
Take as given the assumptions and setting of \Cref{thm:tri_error_analysis}. In particular, $n\in\N_0$ is the interpolation degree and $m\in\N$, $m > n$, is the degree of exactness of the reference quadrature rule leading to the global composite quadrature rule $\Qcal^{h,m}_\Omega$ in~\eqref{eq:globnodesweights}-\eqref{eq:standard_QR}, and $\Tcal = (\Tcal_h)_{0<h\le h_0}$ is a family of meshes for which \Cref{cor:curvilinear} holds with $\theta \ge m + 1$. For all $x \in \Ecal_h$ and for sufficiently small $h>0$, it holds that
\begin{equation}\label{far_quadrature_optimal_estimate_V}
  \bigg|\, \int_{ \Omega \setminus \Ncal_h} \phi(x,y)\de y - \Qcal_{\Omega \setminus \Ncal_h}[\phi(x, \cdot)] \,\bigg| \le \Ccal \Lambda
        \begin{cases}
            h^{n+4-\sigma}, & m > n + 3 - \sigma \\[-0.5ex]
            h^{n+4-\sigma}\left|\log h\right|, &m = n + 3 - \sigma\\[-0.5ex]
            h^{m+1}, & m < n + 3 - \sigma,
        \end{cases}
\end{equation}
with positive constant $\Ccal = \Ccal(m,n, k, \kappa_\Tcal, q_\Tcal, c_{m+2}(\Tcal), f, \Omega)$ independent of $h$ and $\Lambda$.
\end{lemma}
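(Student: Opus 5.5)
The plan is a dyadic shell decomposition of $\Omega\setminus\Ncal_h$ around $x$. On each element we apply the local quadrature error estimate on $\KHat$, using the pulled-back derivative bounds of \Cref{taylor_lagrange_interpolation_lemma_reference}, and then sum over shells.

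\textbf{Step 1: shells.} Set $\delta_k := 2^k h$ for $k=0,1,\dots,k_{\max}$, with $k_{\max}\simeq \log_2(\operatorname{diam}\Omega/h)$, so $k_{\max}\lesssim|\log h|$. For every element $K_\ell\subset\Omega\setminus\Ncal_h$, let $k(\ell)$ be the largest $k$ with $K_\ell\subset\Omega\setminus \overline{B_{\delta_k}(x)}$. Such a $k$ exists because $K_\ell\cap B_h(x)=\emptyset$, and at worst we shrink $h$ by a fixed factor. Since elements have diameter at most $h\le\delta_k$, $K_\ell$ meets $B_{\delta_{k+2}}(x)$, so it lies inside $B_{4\delta_k}(x)$. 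By quasi-uniformity the total volume of the elements assigned to shell $k$ is $\lesssim\delta_k^3$, and their number is $\lesssim(\delta_k/h)^3$.

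\textbf{Step 2: local error.} On each $K_\ell$ we write the error as $E_\ell=\int_{\KHat}\psi-\widehat\Qcal^m[\psi]$ with $\psi(\widehat y)=\phi(x,T_\ell(\widehat y))\,|\det \mathsf J_\ell(\widehat y)|$. The rule has degree of exactness $m$ and positive weights, so a Taylor expansion of degree $m$ (Bramble--Hilbert) gives $|E_\ell|\lesssim\sup_{|\beta|=m+1}\|D^\beta\psi\|_\infty$.

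We expand $D^\beta\psi$ by the Leibniz rule. Derivatives of $\det\mathsf J_\ell$ of order $j$ are $\lesssim h^{3+j}$; this follows from \Cref{cor:curvilinear} with $\theta\ge m+1$, since $\det\mathsf J_\ell$ is cubic in first derivatives of $T_\ell$. For derivatives of $\phi\circ T_\ell$ we use \eqref{eq:interpolation_estimate_reference} with $\delta=\delta_k\ge h$. Then $S_n(h/\delta)\lesssim 1$ and $\sum_{|\lambda|\le|\beta|}\delta^{-|\lambda|}\lesssim\delta^{-|\beta|}$ when $h\le\delta$, but lower orders of $\phi$ must also be tracked: the $0$-th derivative is bounded by \eqref{eq:interpolation_estimate} with $\alpha=0$. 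Each term carries $h^{3+m+1}$ times $\delta_k^{-\sigma+n+1-j}$ with $0\le j\le m+1$, and the worst is $j=m+1$ since $\delta_k\ge h$. This yields
\[
|E_\ell|\lesssim \Lambda\, h^{m+4}\,\delta_k^{-\sigma+n-m}.
\]
Here $\Lambda\ge1$ absorbs $C\subT$.

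\textbf{Step 3: summation.} Summing over shells,
\[
\sum_k (\delta_k/h)^3\, h^{m+4}\delta_k^{-\sigma+n-m}= h^{m+1}\sum_k \delta_k^{\,n+3-\sigma-m}.
\]
This is a geometric sum in $2^k$ with exponent $e=n+3-\sigma-m$, and there are three cases.
\begin{itemize}
\item If $e<0$ ($m>n+3-\sigma$), the sum is dominated by $k=0$, giving $h^{m+1}h^{e}=h^{n+4-\sigma}$.
\item If $e=0$, each of the $\lesssim|\log h|$ shells contributes equally, giving $h^{n+4-\sigma}|\log h|$.
\item If $e>0$, the sum is dominated by the largest $\delta_k\sim\operatorname{diam}\Omega$, giving $h^{m+1}$.
\end{itemize}
Since $E_\ell$ vanishes for elements outside $\Omega$, $\Omega$ bounded ends the sum.

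\textbf{Main obstacle.} The delicate point is Step 2. We need the derivative bound to be uniform with $\delta$ tied to the shell rather than to the element's true distance. We also need the Jacobian factor and the curved map not to spoil the $h^{m+1}$ gain, which requires $\theta\ge m+1$. Finally, the factor $S_n(h/\delta)$ must stay bounded; it does because $\delta\ge h$.
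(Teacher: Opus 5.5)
Your proof is correct. It has the same architecture as the paper's: a shell decomposition about $x$, a per-element reference-space quadrature error controlled by $(m+1)$th derivatives of $\phi(x,T_\ell(\cdot))\,|\det\mathsf J_\ell|$ via \Cref{taylor_lagrange_interpolation_lemma_reference}, and a three-case summation. The decomposition itself is different.

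\textbf{The paper's decomposition.} It uses shells of uniform width $h$, with $jh\le|x-y|<(j+2)h$ for $1\le j\le J\approx\delta_0/h$. Each shell holds $\lesssim j^2$ elements, so the error becomes $h^{n+4-\sigma}\sum_{j\le J} j^{\,n+2-\sigma-m}$. The three cases then come from a convergent $p$-series, harmonic numbers, or Faulhaber's formula. The far region $|x-y|\ge\delta_0$ has to be treated separately with a fixed $\delta=\delta_0$.

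\textbf{Your decomposition.} Your dyadic shells hold $\lesssim(\delta_k/h)^3$ elements each, so everything reduces to a geometric series in $2^k$. They reach $\operatorname{diam}\Omega$ after $\Ocal(|\log h|)$ steps, which is exactly where the logarithm comes from in the borderline case. No separate far field is needed. Your ``largest $k$'' rule also puts each element in exactly one shell, and it gives the hypothesis $K_\ell\subset\Omega\setminus\overline{B_{\delta}(x)}$ of \Cref{taylor_lagrange_interpolation_lemma_reference} directly.

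\textbf{The Leibniz step.} Here your argument is more careful than the paper's. You use $|D^j\det\mathsf J_\ell|\lesssim h^{3+j}$, which holds because $\theta\ge m+1$. This is what makes the cross terms also $\Ocal(h^{m+4})$: those are the terms pairing low-order derivatives of $\phi\circ T_\ell$, which carry only $h^{|\beta'|}$, with high-order Jacobian derivatives. The paper's product-rule step \eqref{eq:mapped_regularized_prod_rule} keeps only the top-order derivative of $\phi\circ T_\ell$. It pairs that with the cruder bound $C_2h^3$ of \eqref{eq:deriv_jacobian}, which by itself would not give the $h^{m+1}$ gain on those terms.

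\textbf{Two small corrections.}
\begin{itemize}
\item The top-order term $j=m+1$ dominates because $\delta_k\le\operatorname{diam}\Omega$, not because $\delta_k\ge h$. The condition $\delta_k\ge h$ is what keeps $S_n(h/\delta_k)$ bounded.
\item Absorbing $C\subT$ into $\Lambda$ is legitimate because $\Lambda\ge 1$, which holds since $\lambda_i(x_i)=1$.
\end{itemize}
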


\proofstep{Proof completion.}
For all $x \in  K\cap\Ecal_h$ it holds that
\begin{equation*}\begin{split}
  \lnrm \Zcal[f-f_n](x) - \ZcalB^{h,m}[f-f_n](x) \rnrm &\le C_{p,d}\lpar\lnrm \Zcal[f-f_n](x) - \ZcalB[f-f_n](x) \rnrm\\
      &+ \lnrm \ZcalB[f-f_n](x) - \ZcalB^{h,m}[f-f_n](x) \rnrm\rpar,
      \end{split}
\end{equation*}
with $f_n=f_n(\cdot,K)$, and the result follows by collecting~\cref{Vk_analytical_regularization_error} (or~\cref{Xbreve_regularized} in the case of $\Zcal=\Xcal$), \cref{near_quadrature_estimate}, and~\cref{far_quadrature_optimal_estimate_V} from \Cref{lem:ordquad_convergence_farfield_optimal}---the latter inequality applied to each pairwise combination of vector/tensor components of $f - f_n$ and of $\sfK$.\enlargethispage{1ex}
\end{proof}

\subsection{Proof of \Cref{taylor_lagrange_interpolation_lemma}}
\label{proof:taylor_lagrange_interpolation_lemma}

Our goal is to find suitable bounds for $D^{\alpha}_{y}\phi(x,y)$, which, applying the product rule, can be expressed as
  \begin{equation}\label{leibniz_rule_phi1}
    D^{\alpha}_{y}\phi(x,y) = \sum_{\beta: \beta \leq \alpha} \binom{\alpha}{\beta} \left(D^\beta_{y} \sfK(x,y)\right)\left(D^{\alpha-\beta} (f(y)-f_n(y;K))\right).
  \end{equation}

Our proof strategy to bound the other factors in~\eqref{leibniz_rule_phi1} is based on expressing
  \begin{equation}\label{eq:decomp_taylor_lagrange}
      f(y) - f_n(y; K) = R_{x, n}^{(1)}[f](y) + R_{x, n}^{(2)}[f](y),
  \end{equation}
where, denoting by $\mathscr{T}_{x, n}[f](y)$ the $n$th total degree Taylor
    polynomial of $f$ centered at $x$, we define the two remainder functions
  \begin{equation}\label{eq:decomp_taylor}
      R_{x, n}^{(1)}[f](y) := f(y) - \mathscr{T}_{x,n}[f](y),\qquad
      R_{x, n}^{(2)}[f](y) := \mathscr{T}_{x,n}[f](y) - f_n(y; K).
  \end{equation}
The term $R_{x, n}^{(1)}[f]$ can be simply expressed in terms of the Taylor remainder, while $R_{x, n}^{(2)}[f]$ will be rewritten using the `multipoint' Taylor formula for Lagrange polynomials~\cite{Ciarlet:72}.

As expected, for $\eta\in\N_0^3$, the Taylor remainder $R_{x, n}^{(1)}[f](y)$ satisfies
\begin{equation}\label{eq:taylor_error_bounds}
      \left|  D^{\eta}_{y} R^{(1)}_{x, n}[f](y)\right| \le
      \begin{cases}
          C_{R^{(1)}} |y - x|^{n + 1 - |\eta|},  & n + 1 > |\eta|, \\[-0.5ex]
          C_{R^{(1)}}, &n + 1 \le |\eta| \leq m + 1,
      \end{cases}
\end{equation}
where $C_{R^{(1)}} = C_{R^{(1)}}(\eta, n, f) > 0$. This estimate can be proven for $|\eta|<n+1$   by first leveraging the identity $ D^{\eta} R^{(1)}_{x, n}[f] =R^{(1)}_{x, n-|\eta|}[D^\eta f] $    which follows directly from the fact that $D^\eta \mathscr{T}_{x,n}[f]$ is the  $(n-|\eta|)$th-degree Taylor polynomial  of $D^\eta f$. From the integral form of the Taylor remainder $R^{(1)}_{x, n-|\eta|}[D^\eta f]$~\cite[Sec.1.1]{taylor2013partial}, we obtain
\[
  D^{\eta}_{y} R^{(1)}_{x, n}[f](y)
  = (n\shp 1\shm |\eta|)\sum_{\gamma:|\gamma| = n+1-|\eta|}  \frac{(y\shm x)^\gamma}{\gamma!}\int_0^1 (1\shm t)^n D^{\gamma+\eta} f(x \shp t(y\shm x))\de t.
\]
(For the sake of simplicity of presentation, we have assumed here that $x$ and $y$ can be joined by a straight line, an assumption that is fulfilled for convex domains $\Omega$. However, a simple generalization of the remainder formula \cite[Thm.A1]{driveranalysis}, which is valid for path-connected domains, can be used instead.)
Then, from the bounds
$$\left|\int_0^1 (1 - t)^n  D^{\gamma+\eta} f(x + t(y - x))\de t \right|\leq \frac{\|f\|_{C^{|\gamma|+|\eta|}(\overline\Omega)}}{n+1}$$
and $|(y-x)^{\gamma}|\leq \prod_{j=1}^3|y-x|^{\gamma_j} = |y-x|^{|\gamma|}$,
we readily arrive at
\begin{equation*}
      \left| D^{\eta}_{y} R^{(1)}_{x, n}[f](y)\right|\leq \bigg\{\|f\|_{C^{n+1}(\overline\Omega)}\left(\frac{n+1-|\eta|}{n+1}\right)
          \sum_{\gamma:|\gamma| = n + 1-|\eta|} \frac{1}{\gamma!} \bigg\} |y-x|^{n+1-|\eta|},
\end{equation*}
which gives an expression for the constant $C_{R^{(1)}}$ in the case $|\eta|\leq n+1$. For $n+1<|\eta|\leq m+1$, on the other hand, the bound follows directly from the fact that $D^\eta \mathscr{T}_{x,n}[f]=0$, which allows us to select $C_{R^{(1)}}=\|f\|_{C^{|\eta|}(\overline\Omega)}$.

Remembering that $\alpha\in\N_0^3$ is a multi-index satisfying $|\alpha| \leq m + 1$ and utilizing the bounds~\eqref{eq:taylor_error_bounds} with $\eta=\beta$ in conjunction with bounds on admissible kernels given in \Cref{def:kernel_bounds} with $\eta = \alpha - \beta$, we have
\begin{align}\label{eq:prod_Taylor}
\MoveEqLeft[3]
        \left| D^{\alpha}_{y} \left(\sfK(x,y)R_{x, n}^{(1)}\vphantom{D^{\alpha}_{y} (\sfK(x,y)R_{x, n}^{(1)}}[f](y) \right)\right|
      = \bigg| \sum_{\beta: \beta \le \alpha} \binom{\alpha}{\beta} \left(D_{y}^{\alpha-\beta} \sfK(x,y)\right) \left(D^{\beta}_{y} R_{x, n}^{(1)}[f](y)\right)\bigg| \\[-1ex]
      &\le \sum_{\beta: |\beta| < n+1} \binom{\alpha}{\beta} C_{\sfK,|\alpha|,\operatorname{diam} \Omega} \,|x - y|^{-\sigma - |\alpha-\beta|} C_{R^{(1)}} |x - y|^{n+1-|\beta|} \notag \\[-1ex] & \qquad
        \phantom{\le} +\sum_{\beta: \beta \le \alpha,\; n+1\le|\beta|} \binom{\alpha}{\beta} C_{\sfK,|\alpha|,\operatorname{diam} \Omega} |x - y|^{-\sigma -|\alpha-\beta|} C_{R^{(1)}}. \notag
\end{align}
To bound the terms in~\eqref{eq:prod_Taylor}, we note that since $|x-y|\geq\delta$, we have $|x - y|^p \leq  \delta^p$ for any $p < 0$; conversely, if $p > 0$ then $|x - y|^p \leq C_{\Omega, p}$. Then, since $\beta\leq\alpha$, $|\alpha-\beta|=|\alpha|-|\beta|$,
\begin{equation*}
\begin{split}
  \sum_{\beta: |\beta| < n+1} \binom{\alpha}{\beta} C_\sfK &|x - y|^{-\sigma -|\alpha-\beta|} C_{R^{(1)}} |x - y|^{n+1-|\beta|}\lesssim |x-y|^{-\sigma + n + 1 -|\alpha|}\\[-1ex]
  &\le \begin{cases} C_{|\alpha|, \sfK, R^{(1)}} \delta^{-\sigma + n + 1- |\alpha|}, &-\sigma + n + 1 -|\alpha| < 0\\
                     C_{\Omega, |\alpha|, \sfK, R^{(1)}}, &-\sigma + n + 1 -|\alpha| \ge 0
  \end{cases}.
  \end{split}
\end{equation*}
For the second right-hand-side term in~\cref{eq:prod_Taylor}, if $|x - y| \geq \delta > 1$, then the sum is bounded by a constant dependent only on $n$, $|\alpha|$, and the diameter of $\Omega$, while if $0 < \delta < 1$, since $|\beta| \geq n + 1$, we have
\begin{equation*}
  \sum_{\beta: \beta \le \alpha,\; n+1\le|\beta|} \binom{\alpha}{\beta} C_{\sfK,|\alpha|,\operatorname{diam} \Omega} |x - y|^{-\sigma -|\alpha-\beta|} C_{R^{(1)}}
  \lesssim \delta^{-\sigma - |\alpha| + n + 1},
\end{equation*}
where the implied constant in the last bound depends on $|\alpha|,n,\OO$.

Therefore, using the above relations we conclude that~\eqref{eq:prod_Taylor} simplifies to
    \begin{equation}\label{eq:taylor_error_final}
      \left| D^{\alpha}_{y} \left(\sfK(x,y)R_{x, n}^{(1)}[f](y) \right)\right|  \le
      C\subT\delta^{-\sigma + n + 1 - |\alpha|} ,\quad y\in \Omega\setminus B_\delta(x), \ \delta>0.
\end{equation}
where $ C\subT = C\subT(|\alpha|,n,f,\Omega, C_{K,|\alpha|})$. This completes the estimate concerning $R^{(1)}_{x,n}[f]$.

Turning to $R^{(2)}_{x,n}[f]$, we exploit the polynomial interpolation estimate~\cite[Eq.\ (6.43)]{anderson2024fast}, developed in Lemma 6.8 of~\cite{anderson2024fast} and  based on the fundamental work of~\cite{Ciarlet:72}, which, relying on the mesh being shape-regular and quasi-uniform (Def.~\ref{def:regularuniform}), bounds differences between a Lagrange interpolating polynomial on $K$ and a Taylor polynomial developed at $x$.  For a multi-index $\eta\in\N_0^3$ and for all $y \ne x$, this estimate provides that, $c_\gamma$ being~\cite[eq.~(6.38)]{anderson2024fast} the coefficients in $R^{(2)}_{x,n}[f](y) = \sum_{\gamma:|\gamma|\le n} c_\gamma(y - x)^\gamma$,
\begin{align}\label{eq:lagrange_taylor_error}
  \left| D^{\eta}_{y} R^{(2)}_{x,n}[f](y)\right|
  &= \left|D^{\eta}_{y} \left(\mathscr{T}[f]_{x,n}(y) - f_n(y; K)\right)\right|\\
        &\le \sum_{\gamma:|\gamma| \le n,\; \eta\leq\gamma}  \frac{\gamma!}{(\gamma - \eta)!} \left|c_\gamma(y - x)^{\gamma-\eta}\right| \notag \\
        &\le C_{R^{(2)}} \Lambda \sum_{\gamma:|\gamma| \le n,\; \eta\leq\gamma} h^{n + 1 - |\gamma|} \left|y - x\right|^{|\gamma| - |\eta|}, \notag
\end{align}
for $|\eta|\leq n$, where $C_{R^{(2)}} = C_{R^{(2)}}(n, \eta, \kappa_\Tcal, q_{\Tcal}, f, \Omega) > 0$, and $\labs D^{\eta}_{y} R^{(2)}_{x, n}[f](y) \rabs=0$ otherwise. This bound makes use of the assumption $f \in C^{n+3}(\overline\Omega)$ together with a Sobolev extension theorem~\cite[\S 6, Thm.\ 5]{stein1970singular} to ensure that a suitable extension, also denoted by $f$, satisfies $f \in C^{n+1}(H(K))$; note that the bound is proven for curved elements yet is quantitatively dependent only on $\Lambda$, $\kappa_\Tcal$, and $q_\Tcal$. Again utilizing the bounds~\eqref{eq:greens_admissible} together with~\eqref{eq:lagrange_taylor_error} we have, for $y \ne x$,
\begin{subequations}
\begin{align}\label{eq:lagrange_multipoint_error_final}
\MoveEqLeft[1]
        \left| D^{\alpha}_{y} \left(\sfK(x,y)R_{x, n}^{(2)}[f](y) \right)\right|
        = \bigg| \sum_{\beta: \beta \le \alpha} \binom{\alpha}{\beta} \left(D_{y}^{\alpha-\beta} \sfK(x,y)\right) \left(D^{\beta}_{y} R_{x, n}^{(2)}[f](y)\right)\bigg| \\
          &\le \sum_{\substack{\beta: \beta \le \alpha,\\ |\beta|\leq n}} \binom{\alpha}{\beta} C_{\sfK,|\alpha|,\operatorname{diam} \Omega} |y\shm x|^{-\sigma - |\alpha|+|\beta|} C_{R^{(2)}} \Lambda\sum_{\substack{\gamma:|\gamma| \le n\\\beta\leq\gamma}} h^{n + 1 - |\gamma|} |y\shm x|^{|\gamma| -  |\beta|} \notag \\
          &\le  C\subL \Lambda \sum_{\gamma:|\gamma| \le n} h^{n + 1 - |\gamma|} |y\shm x|^{-\sigma + |\gamma| - |\alpha|}, \notag
\end{align}
where $C\subL = C\subL(|\alpha|,n, \kappa_\Tcal, q_\Tcal, f, \sfK, \Omega) > 0$.

For $|x-y|\geq\delta$, the above estimate, which holds for all $x,y\in\Omega$, $x\neq y$, simplifies for $-\sigma + |\gamma| - |\alpha| <0$ to
\begin{equation}\label{eq:lagrange_multipoint_error_final_2}
  \left| D^{\alpha}_{y} \left(\sfK(x,y)R_{x, n}^{(2)}[f](y) \right)\right|
  \le  C\subL \Lambda \sum_{\gamma:|\gamma| \le n} h^{n + 1 - |\gamma|} \delta^{-\sigma + |\gamma| - |\alpha|},
\end{equation}
while for $-\sigma + |\gamma| - |\alpha| \geq 0$ the same estimate, up to a suitable redefinition of $C\subL$, holds since $|x - y| \leq \operatorname{diam}(\Omega)$.

\label{eq:both_esti}
\end{subequations}
Estimates~\eqref{eq:taylor_error_final} and~\eqref{eq:both_esti} imply~\eqref{eq:interpolation_estimate}.\enlargethispage*{1ex}

\subsection{Proof of \Cref{lem:ordquad_convergence_farfield_optimal}}
\label{proof:ordquad_convergence_farfield_optimal}

Let $x \in \Ecal_h\subset\overline{\Omega}$ be given such that $K\ni x$. Let $\delta_0\leq \operatorname{diam}\Omega$ be a fixed length and assume without loss of generality that $h_0 < \delta_0$ so that $h\le h_0<\delta_0$.     Consider a sequence of radii $r_j = jh$, ($j=0,\ldots,J+1$), with the integer $J$ such that $Jh<\delta_0\leq (J+1)h$), and let
$$
    \widetilde{A}_{j}=\widetilde{A}_j(x) := \{ y\in\Omega: jh \le |x -y| \le (j+1)h \}, \qquad j=0,\ldots,J,
$$
in order to define the `meshed' shells $A_j=A_j(x)$ (reference~\cite[Fig.\ 3]{anderson2024fast} displays the setup in the analogous 2D case):
\begin{equation}
\left\{\begin{aligned}
 A_0 &:= \cup\,\big\{K_\ell\in\Tcal_h:K_\ell\cap \widetilde{A}_0\neq\emptyset\big\},\\
 A_j &:= \cup\,\big\{K_\ell\in\Tcal_h:K_\ell \cap \widetilde{A}_j\neq \emptyset, K_\ell \not\subset A_{j-1}\big\}  &\quad&(1\leq j\leq J),\\
 A_{J+1} &:= \overline{\Omega}\setminus\cup_{j=0}^J\, A_j. 
\end{aligned}\right.
\label{eq:meshedring_def}
\end{equation}
Note that with the definitions above we have $A_0=\Ncal_h$ and $\bigcup_{j=1}^{J+1}A_j = \overline\Omega\setminus\Ncal_h$  (recall definition~\eqref{Nh:def} of $\Ncal_h$). By the triangle inequality it then follows that
\begin{multline}\label{eq:inte_split}
  \bigg|\int_{\Omega\setminus\Ncal_h} \phi(x,y)\de y - \Qcal_{\Omega \setminus \Ncal_h}[\phi(x, \cdot)] \bigg|
  \leq  \sum_{j=1}^{J} \Rcal(j) + \Rcal(J\shp 1) \\
  \text{with} \quad \Rcal(j) := \bigg|\int_{A_j} \phi(x,y)\de y - \Qcal_{A_j}[\phi(x, \cdot)]\bigg|.
\end{multline}
In what follows we derive error estimates for each summand of~\eqref{eq:inte_split} which, for convenience, we write as a collection of reference space integrals:
\begin{equation}\label{eq:Aj_reference}
  \int_{A_j} \phi(x, y)\de y = \sum_{K_\ell \in A_j} \int_{\KHat} \phi(x, T_\ell(\widehat{y})) |\operatorname{det} J_\ell(\widehat{y})|\de \widehat{y}, \quad j = 1, \ldots, J+1.
\end{equation}
Since by quasi-uniformity
and shape-regularity and the fact that, for $1 \le j \le J$, $jh \le |x - y| < (j + 2)h$ for every $y \in A_j$, we have $\#\{K_\ell: K_\ell \subset A_j\} \le C_{q_\Tcal,
\kappa_\Tcal} j^2$ so that, calling $\psi(x, \widehat{y}) \coloneqq \phi(x, T_\ell(\widehat{y})) |\operatorname{det} J_\ell(\widehat{y})|$ and applying~\cite[Eq.\ (7.21)]{IsaacsonKeller} to $\widehat{\mathcal{Q}}^m[\psi]$, we have
\begin{multline}\label{eq:single_ring_bound}
   \hspace*{-0.5em}\Rcal(j) =
    \Bigg|\sum_{K_\ell \in A_j}\!\! \bigg\{\! \int_{\KHat}  \phi(x, T_\ell(\widehat{y})) |\operatorname{det} J_\ell(\widehat{y})|\de \widehat{y} - \sum_{i=1}^{q_n} \widehat{\omega}_i \phi(x, T_\ell(\widehat{y}_i))\left|\det J_\ell(\widehat{y}_i)\right|\,\bigg\}\Bigg| \hspace*{-2em} \\
    \le C_1 j^2 M_{m+1}(j)
\end{multline}
where $C_1 = C_1(m,q_\Tcal,
    \kappa_\Tcal)$ and $M_{m+1}(j)$ is a constant such that
\begin{equation}\label{eq:Mbound_def}
  \sup_{\ell: K_\ell \subset A_j} \sup_{\widehat{y} \in \KHat} \sup_{|\alpha| = m + 1} \left|D^\alpha_{\widehat{y}} \psi(x, \widehat{y}) \right| \le M_{m+1}(j).
\end{equation}
Such bounds together with~\cref{eq:single_ring_bound} will serve as a composite quadrature error estimate for each of the integrals over $A_1, \ldots, A_J$ in~\cref{eq:inte_split}; we will likewise develop, later, quadrature error bounds for the integral over $A_{J+1}$.

To find such constants $M_{m+1}(j)$, we begin by observing that, in view of the strict sign-definiteness of $\det J_\ell$, we may express
\begin{equation}\label{eq:mapped_regularized_prod_rule}
  \left|D^\alpha_{\widehat{y}} \psi(x, \widehat{y})\right| = \bigg|\sum_{\beta: \beta \le \alpha}\binom{\alpha}{\beta} \left((D^\alpha_{\widehat{y}} \phi(x, T_\ell(\widehat{y}))\right)(D^{\alpha - \beta}_{\widehat{y}} \operatorname{det} J_\ell(\widehat{y}))\bigg|.
\end{equation}
It follows from \Cref{cor:curvilinear} that
\begin{equation}\label{eq:deriv_jacobian}
  |D^{\alpha - \beta}_{\widehat{y}} \det J_\ell(\widehat{y})| \le C_2 h^3\quad\mbox{for}\quad |\alpha - \beta| \le m+1; \quad C_2 = C_2(\Gamma, c_{m+2}(\Tcal), m),
\end{equation}
since, calling the components of $T_\ell = [T_{\ell, 1}, T_{\ell, 2}, T_{\ell, 3}]$, and since $\det J_\ell$ is a sum of 6 terms of the form
\[
  \left(\partial_{\widehat{x}_q}T_{\ell, i}\right) \left(\partial_{\widehat{x}_r} T_{\ell, j}\right) \left(\partial_{\widehat{x}_s} T_{\ell, k}\right), \quad\mbox{with}\quad i, j, k, q, r, s \in \{1, 2, 3\}.
\]
Using product rule and~\cref{eq:Dalpha_TK} the bound follows---the maximum total derivative order of components $[T_\ell]_i$ being $m+2$, all of which exist and are bounded per~\cref{eq:Dalpha_TK} since by assumption $\theta \ge m + 1$.\enlargethispage*{1ex}

But, letting $\ell$ be such that $A_\ell \in A_j$, then using the fact that for all $\widehat{y} \in \KHat$ and setting $y = T_\ell(\widehat{y})$, we have $jh \le |x - y|$, then choosing $\delta = jh$ in \Cref{taylor_lagrange_interpolation_lemma_reference},
\begin{multline}
    \sup_{\ell: K_\ell \subset A_j} \sup_{\widehat{y} \in \KHat} \sup_{|\beta| = m + 1} \left|D^\beta_{\widehat{y}} \phi(x, T_\ell(\widehat{y}))\right| \\
  \le h^{m+1} \sum_{\lambda: 1 \le |\lambda| \le m+1} (jh)^{-\sigma + n + 1 - |\lambda|} \Big( \widehat{C}\subT
  + \widehat{C}\subL \Lambda S_n(1/j) \Big).
  \end{multline}
Simplifying and using~\cref{eq:deriv_jacobian}, we find a bound for~\cref{eq:Mbound_def} of
\[
    M_{m+1}(j) = \sum_{\lambda: 1 \le |\lambda| \le m+1} h^{-\sigma + n + 4 - |\lambda| + (m+1)}
    j^{-\sigma + n + 1 - |\lambda|} \Big( \widehat{C}\subT + \widehat{C}\subL \Lambda S_n(1/j) \Big).
\]
Using this derivative bound, we sum the inequality~\cref{eq:single_ring_bound} over the $J$ shell domains and find
\begin{multline}\label{eq:composite_estimate_1}
    \sum_{j=1}^J \Rcal(j) \le \sum_{j=1}^J C_1 j^2 M_{m+1}(j) \\
    \le C_1 \sum_{\lambda: 1\le|\lambda| \le m+1} h^{-\sigma + n + 4 - |\lambda| + (m+1)} j^{-\sigma + n + 3 - |\lambda|} \Big( \widehat{C}\subT + \widehat{C}\subL \Lambda S_n(1/j) \Big).
\end{multline}
Letting $C_3 = C_3(\sigma, n, m, \delta_0)$ denote a constant and using $J < \delta_0 h^{-1}$, we will find useful the bound
\[
  \sum_{j=1}^J j^{-\sigma + n + 3 - |\lambda|}
  \le C_3
  \begin{cases}
      1, &\quad -\sigma + n + 3 - |\lambda| < -1,\\
      |\log h|, &\quad -\sigma + n + 3 - |\lambda| = -1,\\
      h^{-1 - (-\sigma + n + 3 - |\lambda|)}, &\quad -\sigma + n + 3 - |\lambda| \ge 0,
  \end{cases}
\]
the sum being bounded above, in the first case for $-\sigma + n + 3 - |\lambda| < -1$ by
a constant (as $\sum_{j=1}^\infty j^{-\zeta}$ converges for $\zeta > 1$), in the second case for
$-\sigma + n + 3 - |\lambda| = -1$ by the harmonic numbers $H_J$ that grow as
$\log J\lesssim |\log h|$, and lastly for $-\sigma + n + 3 - |\lambda| \ge 0$  using
$J<\delta_0 h^{-1}$ and Faulhaber's formula.
Similarly, we find
\[
  \sum_{j=1}^J  j^{-\sigma + 3 + |\gamma| - |\lambda|} S_n(1/j)
  \le C_3
  \begin{cases}
      1, &\quad -\sigma + n + 2 - |\lambda| < -1,\\
      |\log h|, &\quad -\sigma + n + 2 - |\lambda| = -1,\\
      h^{-1 - (-\sigma + n + 2 - |\lambda|)}, &\quad -\sigma + n + 2 - |\lambda| \ge 0.
  \end{cases}
\]
Substituting into~\cref{eq:composite_estimate_1}, one finds
\begin{align}\label{eq:composite_estimate_2}
    \sum_{j=1}^J \Rcal(j)
    &\le C_4 \Lambda \kern-1em \sum_{\lambda: 1\le|\lambda| \le m+1}
  \begin{cases}
      h^{-\sigma + n + 4 - |\lambda| + (m+1)}, &\quad -\sigma + n + 3 - |\lambda| < -1,\\[-0.5ex]
      h^{-\sigma + n + 4 - |\lambda| + (m+1)}|\log h|, &\quad -\sigma + n + 3 - |\lambda| = -1,\\[-0.5ex]
      h^{m + 1}, &\quad -\sigma + n + 3 - |\lambda| \ge 0,
  \end{cases} \notag\\
  &\quad+\sum_{\lambda: 1\le|\lambda| \le m+1}
  \begin{cases}
      h^{-\sigma + n + 4 - |\lambda| + (m+1)}, &\quad -\sigma + n + 2 - |\lambda| < -1,\\[-0.5ex]
      h^{-\sigma + n + 4 - |\lambda| + (m+1)}|\log h|, &\quad -\sigma + n + 2 - |\lambda| = -1,\\[-0.5ex]
      h^{m + 2}, &\quad -\sigma + n + 2 - |\lambda| \ge 0,
  \end{cases}\notag
\end{align}
with $C_4 = C_4(n, m, q_\Tcal,\kappa_\Tcal,\sigma,\delta_0, \widehat{C}\subT, \widehat{C}\subL, \Gamma)$, and finally,
\begin{equation}\label{eq:composite_estimate_final}
    \sum_{j=1}^J \Rcal(j) \le C_5 \Lambda
  \begin{cases}
    h^{-\sigma + n + 4}, &\quad -\sigma + n + 3 - (m+1) < -1,\\[-0.5ex]
      h^{-\sigma + n + 4}|\log h|, &\quad -\sigma + n + 3 - (m+1) = -1,\\[-0.5ex]
      h^{m + 1}, &\quad -\sigma + n + 3 - (m+1) \ge 0,
  \end{cases}
\end{equation}
with $C_5 = C_5(q_\Tcal,\kappa_\Tcal,\sigma,\delta_0, \widehat{C}_{\mathscr{T}}, \widehat{C}_{\mathscr{L}}, \Gamma)$.\enlargethispage*{1ex}

To estimate the quadrature error associated with the integral over
$A_{J+1}$, i.e.~\cref{eq:single_ring_bound} with $j = J+1$, we observe that
$\delta_0 < (J+1)h \le |y - x|$ for $y \in K_\ell \subset A_{J+1}$.  Therefore,
using $\delta = \delta_0$ in~\cref{eq:interpolation_estimate_reference}
together with~\cref{eq:deriv_jacobian} and substituting these into~\cref{eq:mapped_regularized_prod_rule}, we have
\begin{equation}\label{eq:deriv_bound_Jp1}
  M_{m+1}(J+1) = h^{m+4} \sum_{\lambda: 1 \le |\lambda| \le m+1} \delta_0^{-\sigma + n + 1 - |\lambda|} \Big( 
  \widehat{C}\subT
  + \widehat{C}\subL \Lambda S_n(h/\delta_0) \Big).
\end{equation}
Quasi-uniformity and shape-regularity of $\mathcal{T}$ ensure that $\#\{K_\ell: K_\ell \subset A_{J+1}\} h^3 \le C_{q_\Tcal, \kappa_\Tcal} \int_\Omega \de y$,
from which together with~\cref{eq:deriv_bound_Jp1} we have
\begin{equation}\label{eq:composite_estimate_Jp1}
  \Rcal(J\shp 1) \le C_6 \Lambda h^{m+1}.
\end{equation}
with $C_6 = C_6(q_\Tcal,\kappa_\Tcal,\sigma,\delta_0, \widehat{C}_{\mathscr{T}}, \widehat{C}_{\mathscr{L}}, \Gamma )$.
In view of~\cref{eq:inte_split}, from~\cref{eq:composite_estimate_final} and~\cref{eq:composite_estimate_Jp1} we have that the desired bound in~\cref{far_quadrature_optimal_estimate_V} is established with $\mathcal{C} =  C_5 + C_6$.\enlargethispage{3ex}

\section{Numerical demonstration}\label{sec:numer}

In this section we test the regularization for all four integral operators, $\Vcal$, $\nabla \Vcal$, $\Wcal$, and $\Xcal$, and we additionally demonstrate the success of the isogeometric mappings in two ways.

We begin by confirming the validity of the curved tetrahedron implementation without the use of volume integral operators. To do this we first compute the volume of the torus as the smoothness order $\theta$ and quadrature order $m$ vary. Because the map is exact, the only expected error for this test is that of quadrature with a smoothness-dependent convergence rate predicted by~\cref{cor:curvilinear}. \Cref{tab:curve_quad} shows the result of experiments of varying $\theta$, with $m = \theta$ kept fixed. We observe the expected order of convergence consistent with \Cref{cor:curvilinear}, except that superconvergence of exactly one order is observed for the case $m = \theta = 1$.
\begin{table}
    \centering
    \begin{tabular}{|c|c|c|c|c|c|c|c|c|}
    \hline
             & \multicolumn{2}{|c|}{$m = \theta = 1$} & \multicolumn{2}{|c|}{$m = \theta = 2$} & \multicolumn{2}{|c|}{$m = \theta = 3$} & \multicolumn{2}{|c|}{$m = \theta = 4$}\\
             \hline
         $h$ & er & eoc & er & eoc & er & eoc & er & eoc\\
         \hline
         0.24 & 7.58e-3 & -- & 3.23e-4 & --& 1.45e-5 & --& 2.23e-7 & --\\
         0.20 & 4.82e-3 & 2.48 & 2.15e-4 & 2.39& 6.99e-6 & 4.01& 1.05e-7 & 4.14\\
         0.16 & 2.43e-3 & 3.07&  1.09e-4 & 3.05& 2.79e-6 & 4.12& 3.45e-8 & 4.99\\
         0.12 & 1.01e-3 & 3.05&  4.59e-5 & 3.00& 7.85e-7 & 4.41& 8.26e-9 & 4.97\\
         0.08 & 3.02e-4 & 2.98&  1.38e-5 & 2.96& 1.43e-7 & 4.20& 1.10e-9 & 4.98\\
         0.04 & 3.86e-5 & 2.96&  1.79e-6 & 2.95& 8.40e-9 & 4.09& 1.99e-11 & 5.78\\
         0.02 & 4.82e-6 & 3.00&  2.25e-7 & 2.99& 4.95e-10 & 4.09& 1.56e-11 & 0.34\\
         \hline
    \end{tabular}
    \caption{Demonstration of `exact' curved tetrahedral element quadratures. The test computes the volume of the torus of major radius $r_1 = 1.0$ and minor radius $r_2 = 0.5$ using the curved tetrahedral elements implemented in \texttt{Inti.jl} and compares to the true value of $2\pi^2 r_2^2 r_1$; the table reports the error (er) and estimated order of convergence (eoc). The degree of smoothness $\theta$ is kept fixed to the quadrature order $m$ to saturate the quadrature order power.}
    \label{tab:curve_quad}
\end{table}

The methodology is tested by utilizing manufactured solutions. We begin with the operator $\Vcal$. For scalar problems we select a function $u: \Omega \to \mathbb{C}^p$ and set $f: \Omega \to \mathbb{C}^p$ by $f \coloneqq \mathcal{L} u$; specifically, $f(x) = (k_0^2 - k^2) \cos(k_0 (x \cdot p)), |p| = 1$. For Laplace ($k = 0$), $k_0 = \pi$ for $n = 1, 2$ and $k_0 = 5\pi$ for $n = 3, 4$, while for Helmholtz with $n = 1, 2, 3$ we selected $k = \pi$ and $k_0 = 2\pi$, while for $n = 4$, we selected $k = 2\pi$ and $k_0 = 3\pi$; in each case the parameters were chosen to be sufficiently challenging to demonstrate the convergence behavior within available resources. By Green's third identity we have that $\mathcal{V}[f] = v_{\mathrm{ref}}$ with the reference potential given by the formula
\[
v_{\mathrm{ref}}(x) \coloneqq -u(x) + \Scal[\Bcal_\nu u](x) - \Dcal[u](x).
\]
Here, $\Scal$ and $\Bcal$ are defined in~\cref{eq:single_double_layers}. Errors are reported as the maximum error from $v_{\mathrm{ref}}$ measured over the volume integration nodes and are displayed in Figures~\ref{fig:laplacehelmholtz}.   The convergence rates predicted from \Cref{thm:tri_error_analysis}, in this case for a kernel $\sfK$ admissible of class $\sigma = 1$, are displayed in the plots, and the numerical experiments are consistent with the theoretical results; it may be useful to refer to \Cref{3D-VR:m} for the tabulated relationship between quadrature and interpolation orders for the quadratures and interpolation node sets used in these experiments, which are always those of the Vioreanu-Rokhlin rules.\enlargethispage*{5ex}

\begin{figure}[t]
    \centering
    \begin{subfigure}[b]{0.45\textwidth}
        \includegraphics[width=\textwidth]{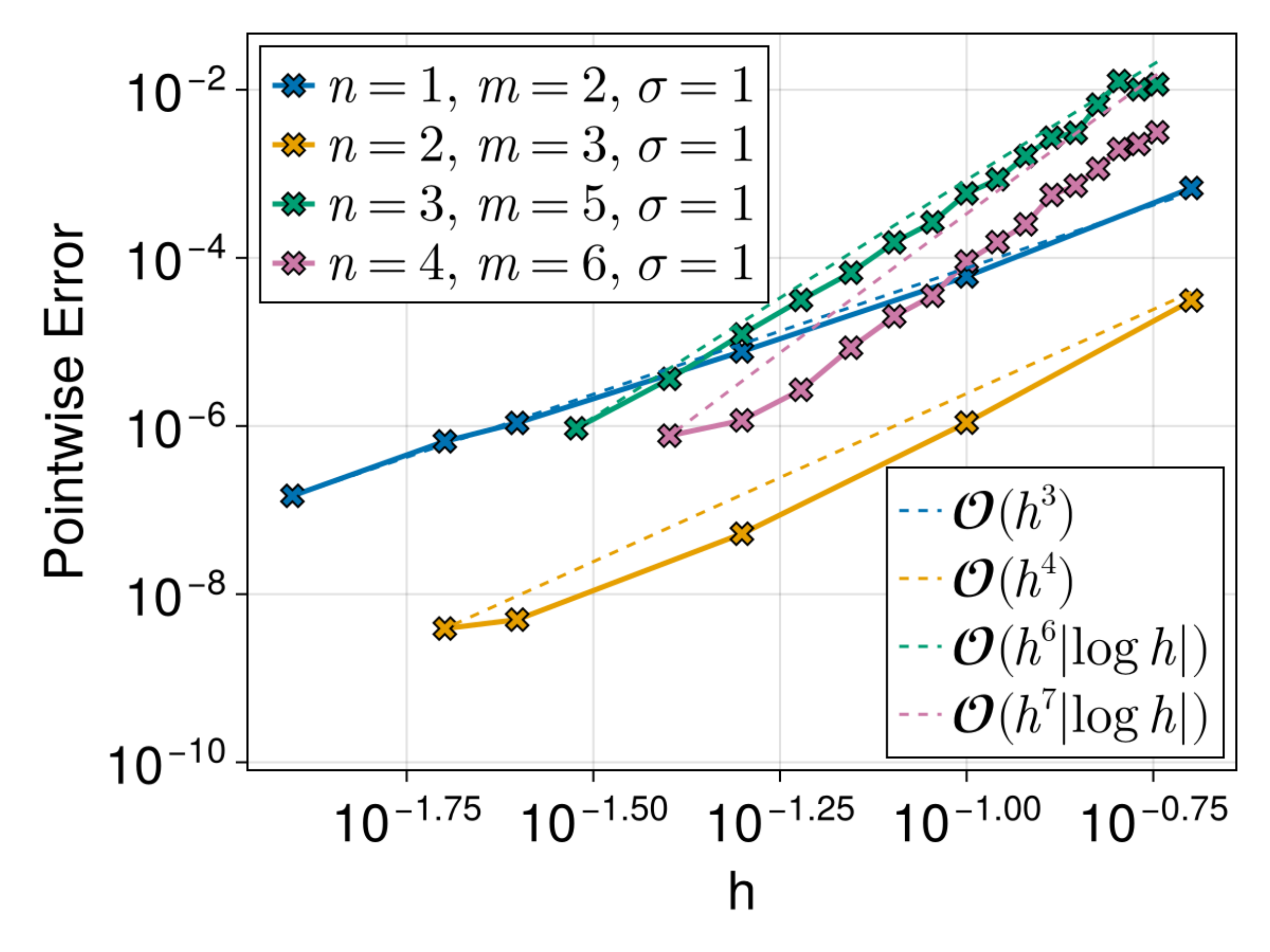}
    \end{subfigure}
    \begin{subfigure}[b]{0.45\textwidth}
        \includegraphics[width=\textwidth]{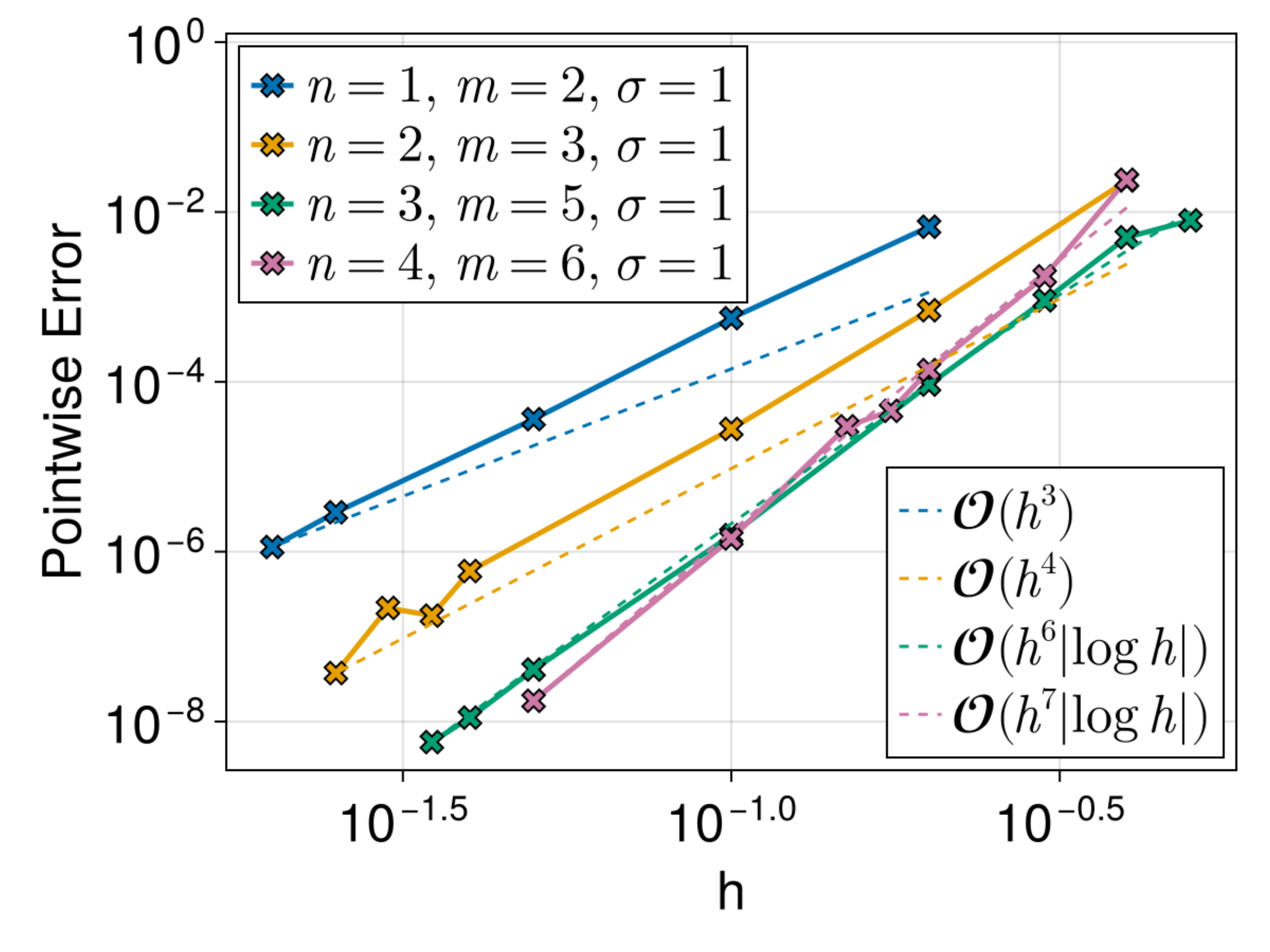}
    \end{subfigure}
     \caption{Green's identity convergence for Laplace (left) and Helmholtz (right) VIO $\Vcal$ on a sphere of unit radius.}
     \label{fig:laplacehelmholtz}

    \centering
    \begin{subfigure}[b]{0.45\textwidth}
        \includegraphics[width=\textwidth]{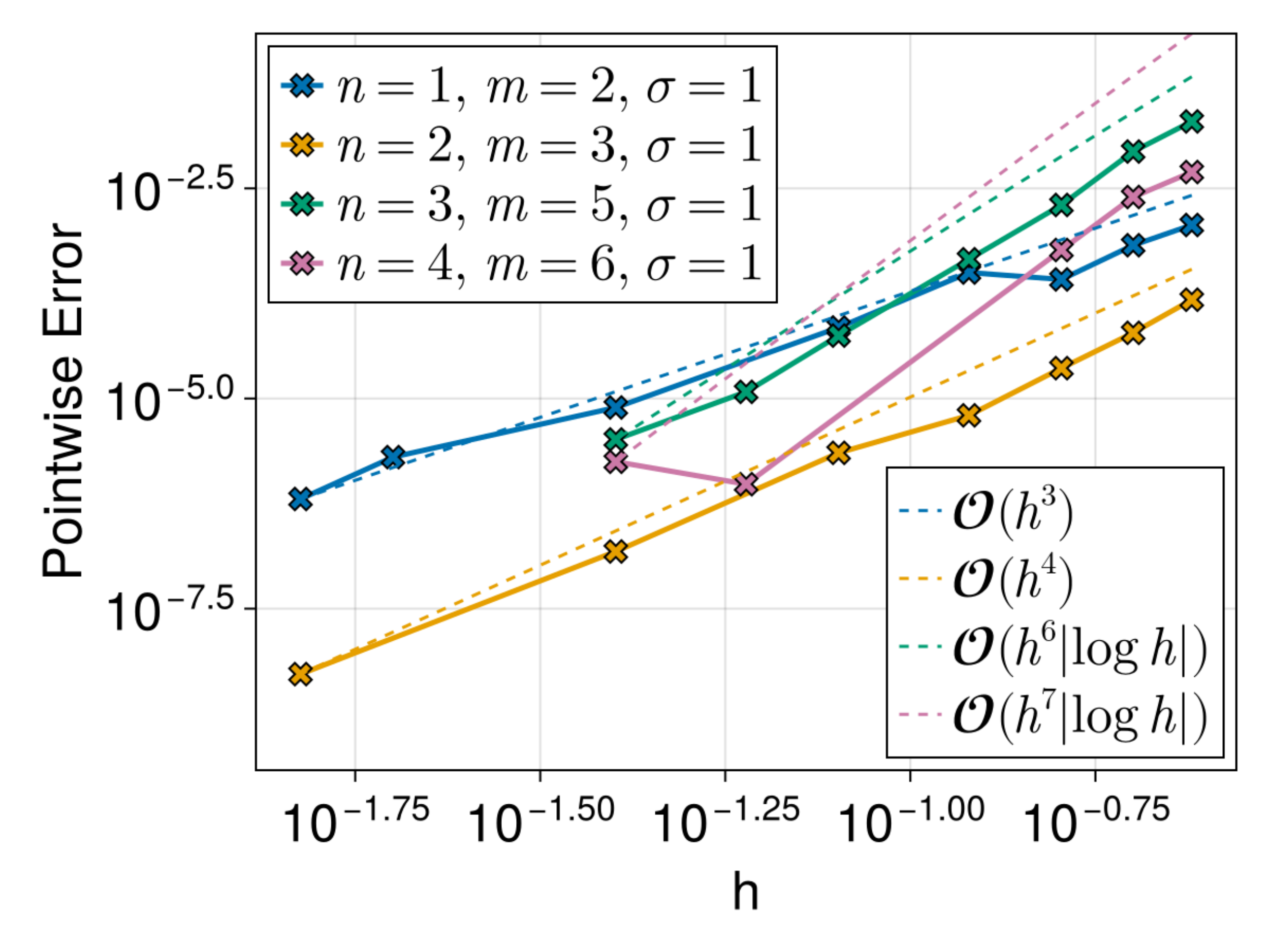}
    \end{subfigure}
    \begin{subfigure}[b]{0.45\textwidth}
        \includegraphics[width=\textwidth]{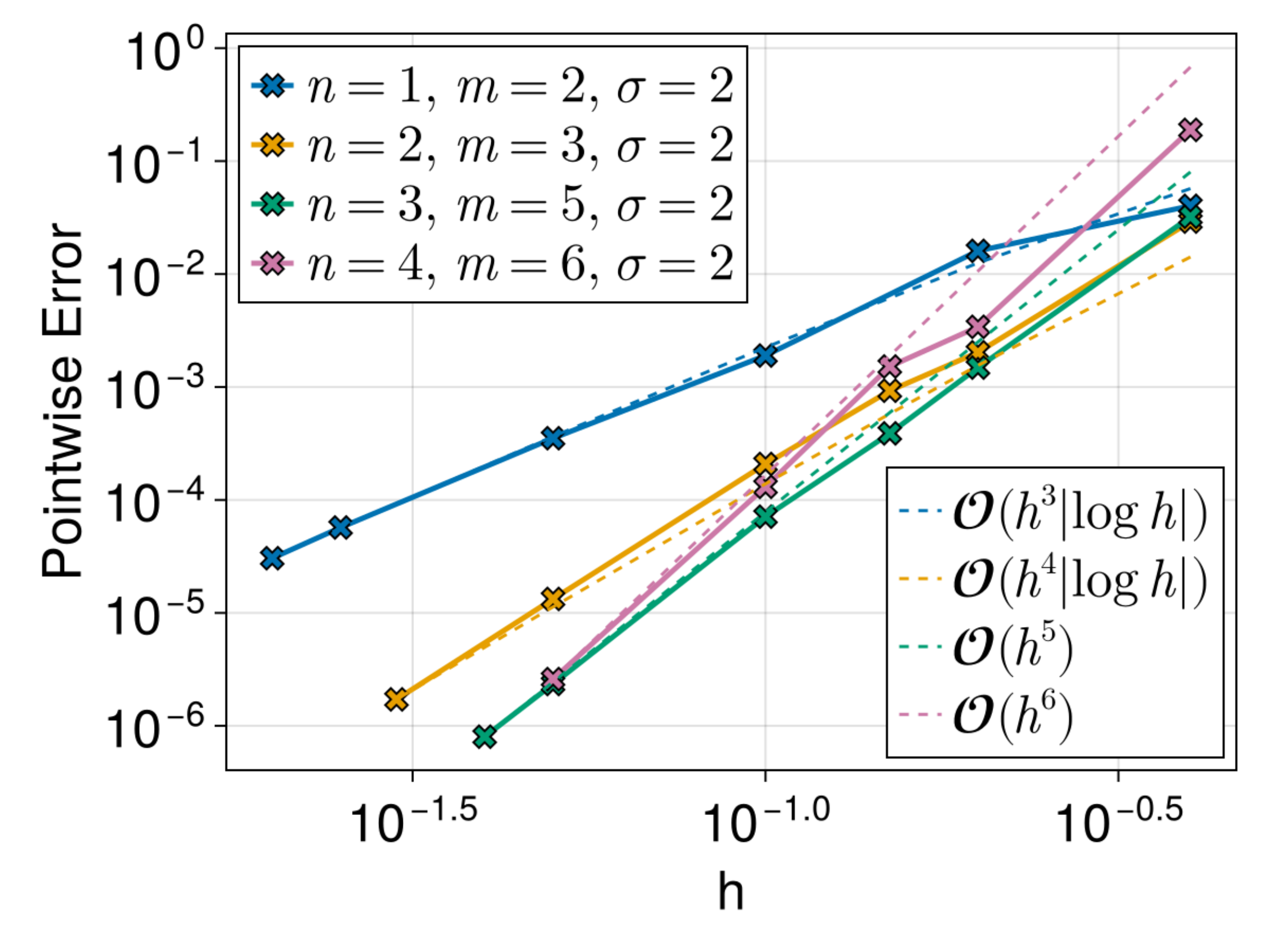}
    \end{subfigure}
    \caption{Poisson solution convergence on a torus (left) and Green's identity convergence for a Laplace VIO $\nabla \Vcal$ on a unit sphere (right).}
     \label{fig:poissongradlaplace}
\end{figure}

We test the operator $\nabla\Vcal$ in a similar way, comparing to the reference solution
\[
v_{\mathrm{ref}}(x) \coloneqq -\nabla u(x) + \nabla \Scal[\Bcal_\nu u](x) - \nabla \Dcal[u](x)
\]
for the Laplace PDE. For this problem the kernel $\sfK$ is admissible of class $\sigma = 2$. The experiments make use of sufficiently high-order methods~\cite{faria2021general} for evaluation of the surface potentials $\nabla \Dcal$ and $\nabla \Scal$ inherent in evaluating $\nabla \Vcal$ so as to match the order expected from the theory. The same manufactured solutions as before are used, making the selection for $n = 1, 2$ of $k = \pi$ while for $n = 3$ that of $k = 3\pi/2$ and for $n = 4$,  $k = 5\pi/2$; once again, the parameters were chosen to be sufficiently challenging to demonstrate the convergence behavior within available resources. Results are shown in Figure~\ref{fig:poissongradlaplace} and closely match the theoretical predictions.

Tests based on the Green's identities such as those presented so far can show high accuracy even in the presence of significant geometrical error, so we use a test of solving a Poisson boundary value problem as a more complete test of the isogeometric curved elements. The curved mapping smoothness parameter $\theta = m+1$, for each choice of quadrature order $m$ (see \Cref{3D-VR:m}), is chosen for purposes of satisfying the assumptions of \Cref{thm:tri_error_analysis}. We use a manufactured solution of $u_e(x) = \cos(k x_1) \sin(k x_2)$ on a torus $\Omega$ of minor radius $1/2$ and major radius of $1$; the boundary data used in solving the boundary value problem consisted of, owing to the geometric exactness of the mesh, samples of $u_e$ at points within $\eps_{\mathrm{mach}}$ distance of the true boundary of the torus. Parameter selections of $k = 2$ for $n = 1, 2$ and $k = 10$ for $n = 3, 4$ were made, to demonstrate convergence behavior within available resources. A boundary integral method~\cite{faria2021general} with convergence order equal to the expected order of convergence of the volume integral method was used to solve the homogeneous problem, and was accelerated with FMM3D with a tolerance of $\varepsilon = 10^{-13}$. The resulting maximum pointwise errors $|u(x_{j}) - u_e(x_{j})|$ measured at any of the quadrature points $\{x_j\}_{j=1}^{N}$ in $\Omega$ are reported in \Cref{fig:poissongradlaplace}.

We turn next to the $\Wcal$ and $\Xcal$ operators; results are shown in Figures~\ref{fig:WX_VIO}. The function $\Psi(x) = \cos(\alpha x_1)\sin(\alpha x_2)\cos(\alpha x_3)$ satisfies $\Lcal \Psi = \mathrm{div} g$, where
\begin{equation}\label{eq:numer_exp_g}
\begin{split}
g(x) = \alpha (\sin(\alpha x_1) \sin(\alpha x_2)\cos(\alpha x_3), &-\cos(\alpha x_1) \cos(\alpha x_2) \cos(\alpha x_3),\\
&\cos(\alpha x_1), \sin(\alpha x_2) \sin(\alpha x_3)),
\end{split}
\end{equation}
Following~\cref{W:IPP2}, the reference function is defined as
\[
v_{\mathrm{ref}}(x)
  = \mu(x)\Psi(x) + \Dcal[\Psi](x) - \Scal\lsqb \Bcal_{\nu}\Psi + g\nu \rsqb(x).
\]
The reference function for testing the $\mathcal{X}$ operator is, in turn, simply $\nabla v_{\mathrm{ref}}$. For $\Wcal$ the parameter $\alpha = 2\pi$ is chosen in~\cref{eq:numer_exp_g}; for $\Xcal$, in turn, for $n = 1, 2$ we choose $\alpha = \pi/4$ while for $n = 3, 4$ we choose $\alpha = \pi$. Results for both are presented in \Cref{fig:WX_VIO} and broadly confirm the convergence analysis; here the kernel $\sfK$ is admissible of class $\sigma = 3$ (a singular volume integral). We observe some disagreement with the theory in the $n = 4$ case; from our experience these are likely due to inaccuracies in the layer potential evaluation---the $\nabla_x \Scal$ term in the regularization formula~\cref{X:IPP} is the most challenging term in all of our regularization formulas as GPDIM~\cite{faria2021general} regularizes $\nabla_x \Scal$ through coupling to a hypersingular integral.

\begin{figure}[t]
    \centering
    \begin{subfigure}[b]{0.45\textwidth}
        \includegraphics[width=\textwidth]{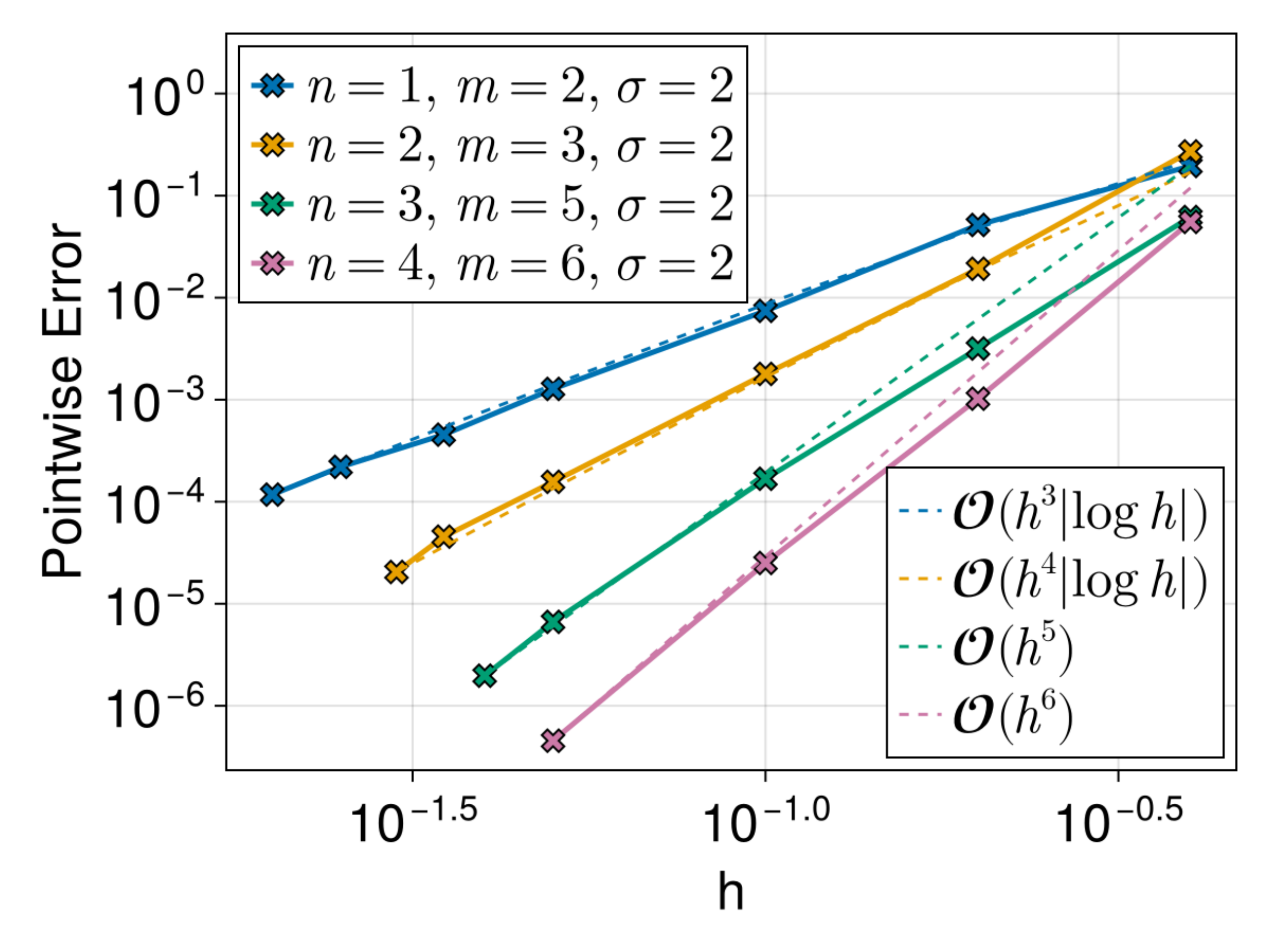}
    \end{subfigure}
    \begin{subfigure}[b]{0.45\textwidth}
        \includegraphics[width=\textwidth]{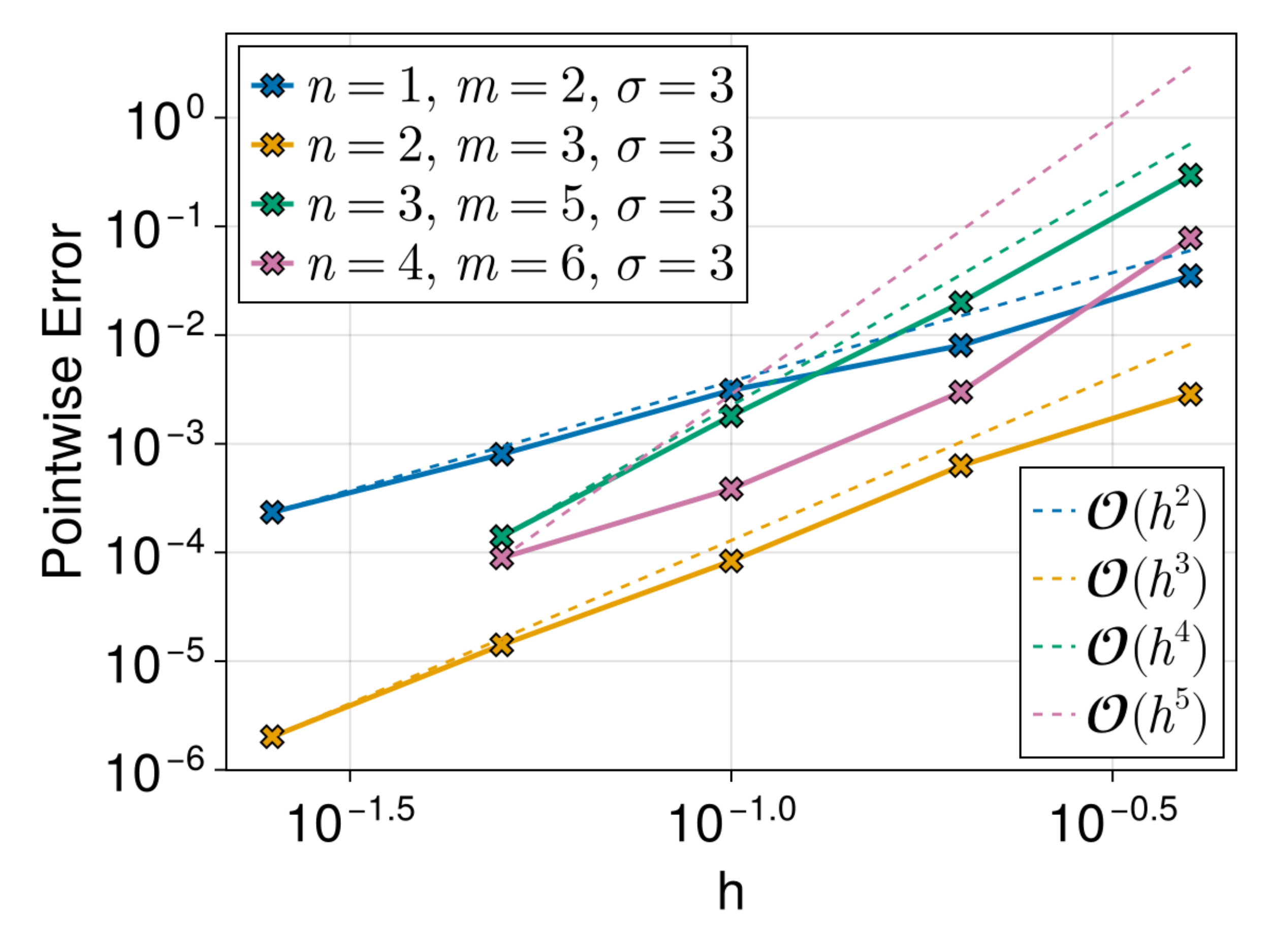}
    \end{subfigure}
    \caption{Green's identity convergence for the Laplace VIOs $\Wcal$ (left) and $\Xcal\sheq\nabla \Wcal$ (right).}
    \label{fig:WX_VIO}
\end{figure}

Vector-valued problems are tested with the Stokes equations. We choose manufactured solution $\mathbf{u}(x) = (\cos(kx_2), \sin(kx_3), \cos(kx_1))$ and $p = \sin(x_3)\cos(x_2)\cos(x_1)$ and test $\Vcal[f]$ where $f = -\Delta \mathbf{u} + \nabla p$. For $n = 1, 2$ we choose $k = 1$ while for $n = 3, 4$ we choose $k = 5$. The results of this experiment are shown in \Cref{fig:stokes} and are once again in line with expected rates of convergence.

\subsection*{Implementation Details}
The methods are implemented in the open-source integral equations library \texttt{Inti.jl}~\cite{Inti}. The curved elements of~\cite{bernardi1989optimal} are supported, but see \Cref{rem:curved} for current implementation limitations. The only nontrivial addition required to support volume potentials for a new PDE is the specification of the conormal and adjoint conormal derivatives, implementation of new polynomial solutions~\cite{anderson2024construction}, and the implementation of the relevant kernels for the layer potentials appearing in the various regularization formulas. However, a practical impediment is that the scheme for the new PDE benefits from a compatible fast summation scheme; while the kernel-independent \texttt{HMatrices.jl} is applicable it is nevertheless memory-intensive. Due to a lack of current application need, the $\Wcal$ and $\Xcal$ operators have not been implemented for vector-valued PDOs ($p > 1$).

\begin{figure}
    \centering
    \includegraphics[width=0.5\linewidth]{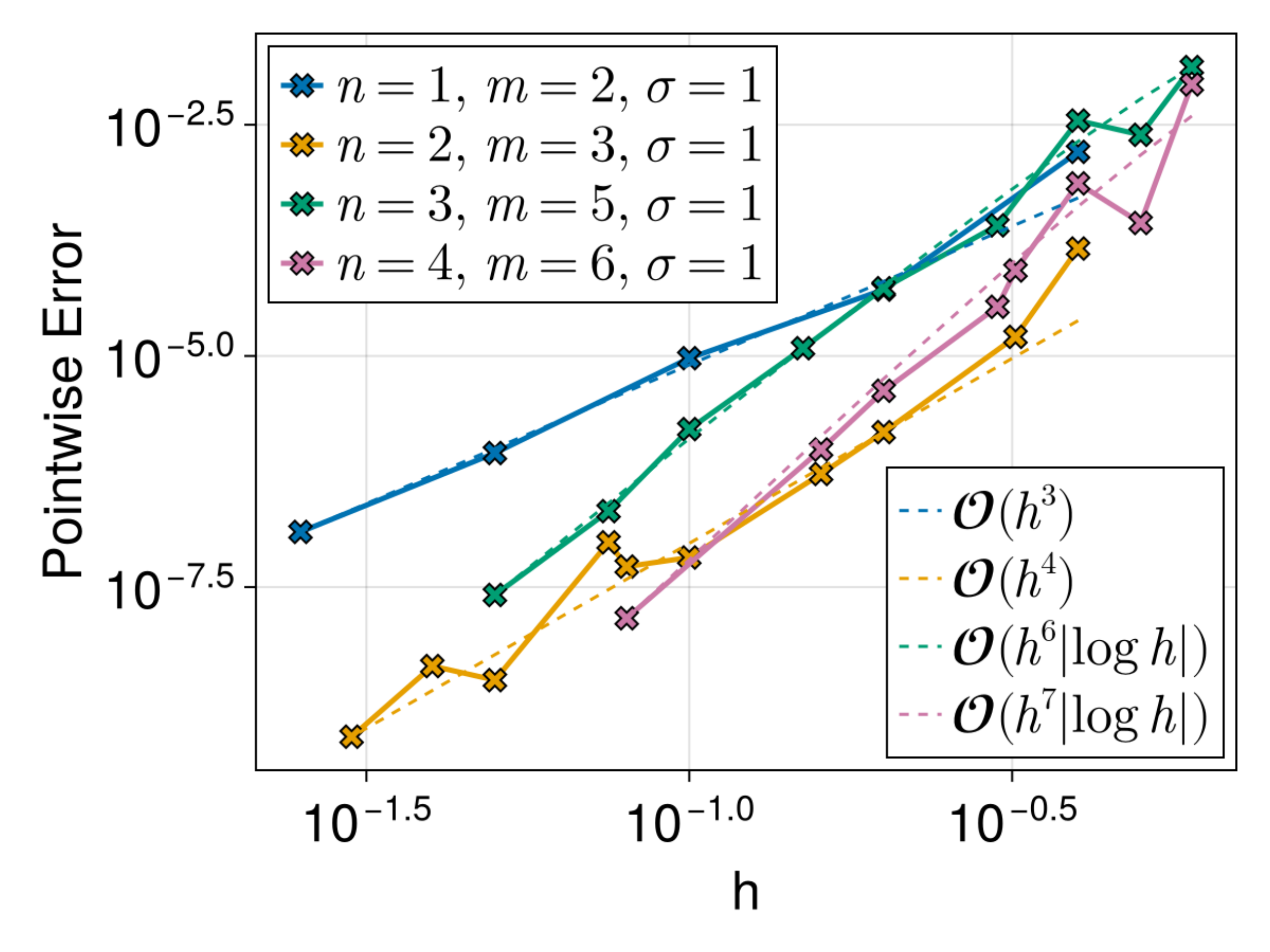}
    \caption{Green's identity convergence for Stokes VIO $\Vcal$ on a sphere of unit radius.}
    \label{fig:stokes}
\end{figure}

\section{Conclusion}\label{sec:conclusion}

We have presented and demonstrated a high-order accurate regularization strategy for a class of singular volume integral operators in three dimensions. While the work of~\cite{anderson2024fast} was in principle kernel-agnostic it established results only for Laplace and Helmholtz, whereas this work generically applies essentially to any PDO for which the Green's identities~\cref{green3} can be established. Future work will consider variants of this regularization technique tailored for problems requiring a high degree of adaptivity (some limitations and solutions in this area are discussed in~\cite{anderson2024fast}) and will consider applications in scattering and to time-dependent PDEs.

\section*{Acknowledgements}
The authors are grateful to Jesse Chan for helpful discussion on the curved finite element literature. The authors acknowledge use of AI tools in assistance with some of the associated coding and assume responsibility for all content. The first author acknowledges support from the NSF under awards DMS-2514012 and DMS-2231482. 

\bibliographystyle{siamplain}
\bibliography{References,marcbibs}
\end{document}